\documentclass[a4paper, 11pt]{article}

\usepackage[margin=1in]{geometry}
\usepackage{setspace}
\usepackage{microtype}

\usepackage{kotex}

\usepackage{amsmath} \allowdisplaybreaks
\usepackage{amssymb}
\usepackage{amsthm} \theoremstyle{definition}
\newtheorem{theorem}{Theorem}
\newtheorem{lemma}{Lemma}

\newtheorem{assumption}{Assumption}
\newtheorem{proposition}{Proposition}
\newtheorem{definition}{Definition}

\usepackage{natbib}
\usepackage{bibentry}

\usepackage{booktabs}
\usepackage{graphicx}
\usepackage{appendix}

\usepackage{subcaption}

\usepackage[hyphens]{url}
\usepackage[bookmarks=false,hidelinks]{hyperref}

\usepackage{multirow}

\usepackage{tikz}
\usepackage{pgfplots}
\usetikzlibrary{positioning}
\usetikzlibrary{arrows,shapes}
\usetikzlibrary{snakes}

\usepackage{authblk}

\usepackage[shortlabels]{enumitem}

\usepackage{pdflscape}

\newcommand{\email}[1]{{\href{mailto:#1}{\nolinkurl{#1}}}}

\usepackage{bm}

\usepackage{xcolor}

\usepackage{etoolbox}
\makeatletter
\def\do#1{\@namedef{#1c}{\ensuremath{\mathcal{#1}}}}
\docsvlist{A,B,C,D,E,F,G,H,I,J,K,L,M,N,O,P,Q,R,S,T,U,V,W,X,Y,Z}
\makeatother

\def\Eb{\mathbb{E}}
\def\Rb{\mathbb{R}}

\newcommand{\dz}{\mathop{} \mathrm{d}z}

\newcommand{\du}{\mathop{} \mathrm{d}u}
\newcommand{\dv}{\mathop{} \mathrm{d}v}

\renewcommand{\hat}{\widehat}

\DeclareMathOperator*{\Var}{Var}
\DeclareMathOperator*{\diam}{diam}

\def\TSP{\mathsf{TSP}}
\def\TSPD{\mathsf{TSPD}}
\def\kTSP{k\mathsf{TSP}}

\newcommand{\meanstd}[2]{#1{\raisebox{-0.25ex}{\tiny$\pm#2$}}}
\newcommand{\meanonly}[1]{#1{\raisebox{-0.25ex}{\tiny\phantom{$\pm0.0000$}}}}

\def\Binomial{\text{Binomial}}
\def\Exponential{\text{Exponential}}
\def\Poisson{\text{Poisson}}
\def\Uniform{\text{Uniform}}

\title{Asymptotic Bounds for the Traveling Salesman Problem with Drone}
\author[a]{Jae Hyeok Lee\thanks{이재혁}}
\author[a]{Taekang Hwang\thanks{황태강}}
\author[a, b]{Changhyun Kwon\thanks{권창현; Corresponding author: \email{chkwon@kaist.ac.kr}}}
\affil[a]{Department of Industrial and Systems Engineering, KAIST, Daejeon, 34141, Republic of Korea}
\affil[b]{Omelet, Inc., Daejeon, 34051, Republic of Korea}
\date{August 4, 2026}

\begin{document}
\onehalfspacing
\maketitle

\begin{abstract}
The asymptotic behavior of the optimal TSP tour length is well known from the classical Beardwood--Halton--Hammersley theorem. We extend this result to the Traveling Salesman Problem with Drone (TSPD), a cooperative routing problem in which a truck and a drone jointly serve customers. Using a nonmonotone subadditive Euclidean functional framework, we establish the existence of an almost sure limit for the optimal TSPD makespan scaled by the square root of the problem size. We derive explicit upper and lower bounds for the speed-scaled Euclidean TSPD model: upper bounds are obtained via structured ring-based tour constructions and Monte Carlo evaluation, while lower bounds are derived using nearest-neighbor distance distributions and the $k$-traveling salesman problem. Computational results illustrate how tight the bounds are. We also extend the analysis to the Rectilinear–Euclidean mixed TSPD model, in which truck travel is measured by the rectilinear distance and drone travel by the Euclidean distance.
\paragraph{Keywords:} traveling salesman; unmanned aerial vehicles; probability; continuous approximation
\end{abstract}

\section{Introduction}

The seminal work of \citet*[BHH][]{beardwood1959shortest} showed that the length of the optimal TSP tour converges as the problem size increases. 
While the original result is more general, we focus on the case of the unit square in $\Rb^2$.

\begin{theorem}[BHH]
Let $X_1, X_2, \ldots, X_n$ be a set of $n$ points, uniformly distributed on the unit box $[0, 1]^2$, and let $\TSP(\{X_1, X_2, \ldots, X_n\})$ be the length of the optimal TSP tour. Then,
\begin{equation}
\lim_{n \to \infty} \frac{\TSP(\{X_1, X_2, \ldots, X_n\})}{\sqrt{n}} = \beta_\TSP
\end{equation}
for some positive constant $\beta_\TSP$, almost surely.
\end{theorem}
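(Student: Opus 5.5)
We will follow the subadditive Euclidean functional route of Steele. Write $L(F)=\TSP(F)$ for finite $F\subset[0,1]^2$, and set $L(F)=0$ when $|F|\le 1$.

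\textbf{Step 1: structural properties of $L$.} We first check three properties.
\begin{itemize}
\item Translation invariance and homogeneity: $L(aF+y)=aL(F)$.
\item Monotonicity: $L(F)\le L(F\cup\{x\})$, which follows from shortcutting and the triangle inequality.
\item Geometric subadditivity: partition $[0,1]^2$ into $m^2$ subsquares $Q_i$ of side $1/m$. Then
\begin{equation*}
L(F)\le \sum_{i=1}^{m^2} L(F\cap Q_i) + C m
\end{equation*}
for an absolute constant $C$. To see this, join the subtours by a tour through one point of each nonempty cell, of length $O(m)$, and then shortcut.
\end{itemize}
We also record the worst-case growth bound $L(F)\le C'\sqrt{|F|}$. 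This follows from a strip (boustrophedon) tour, or from subadditivity with $m\approx\sqrt{|F|}$.

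\textbf{Step 2: limit of the mean under Poissonization.} Let $\Pi_\lambda$ be a Poisson process of intensity $\lambda$ on $[0,1]^2$, and set $\phi(\lambda)=\Eb L(\Pi_\lambda)$. Restrict $\Pi_{m^2\lambda}$ to the cells; each restriction is an independent scaled copy of $\Pi_\lambda$. Subadditivity and scaling then give
\begin{equation*}
\phi(m^2\lambda)\le m^2\cdot\frac{1}{m}\,\phi(\lambda)+Cm ,
\end{equation*}
so $\psi(\lambda)=\phi(\lambda)/\sqrt\lambda$ satisfies $\psi(m^2\lambda)\le\psi(\lambda)+C/\sqrt\lambda$. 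Using monotonicity in $\lambda$ and continuity of $\phi$ to pass between the lattice values $m^2\lambda$, a standard Fekete-type argument yields $\psi(\lambda)\to\beta_\TSP:=\liminf\psi$.

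To show $\beta_\TSP>0$ we use a lower bound. Each point's tour edges have length at least its nearest-neighbor distance, whose expectation is of order $\lambda^{-1/2}$; hence $\phi(\lambda)\ge c\sqrt\lambda$. De-Poissonization uses $|\Pi_n|=n+O_p(\sqrt n)$ together with the estimate
\begin{equation*}
|L(F\cup G)-L(F)|\le C\sqrt{|G|}.
\end{equation*}
This estimate is where monotonicity enters (as the lower bound $L(F)\le L(F\cup G)$), together with subadditivity. It gives $\Eb L(\{X_1,\dots,X_n\})/\sqrt n\to\beta_\TSP$.

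\textbf{Step 3: almost sure convergence.} This is the main obstacle, since the ideas above only control expectations. We will use concentration. Changing one point $X_i$ changes $L$ by at most twice the distance to its nearest neighbors, so the martingale-difference bound of Azuma type alone is too weak: bounded differences of $O(1)$ give only $O(\sqrt n)$ fluctuations. Instead we will use Talagrand's isoperimetric inequality, or Rhee's refinement, which gives
\begin{equation*}
P\bigl(|L_n-\Eb L_n|>t\bigr)\le C e^{-t^2/C}.
\end{equation*}
This rests on the bound $L(F)\le L(G)+C\sqrt{|F\setminus G|}$, a consequence of the growth bound. Taking $t=n^{1/4}$, the tail probabilities are summable, and Borel--Cantelli shows that $(L_n-\Eb L_n)/\sqrt n\to0$ almost surely.

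Combined with Step 2, this gives the theorem. If the full Talagrand machinery is to be avoided, an alternative is the Efron--Stein variance bound $\Var L_n=O(\log n)$ along the subsequence $n=k^4$, followed by interpolating between consecutive $k$ via the $O(\sqrt{|G|})$ perturbation estimate; this also suffices.
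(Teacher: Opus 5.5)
Your proposal is correct and follows the route the paper points to. The paper gives no proof of this theorem; it cites BHH and presents Steele's axioms (A1)--(A5) as the framework, with positivity visible from the nearest-neighbor bound $\beta_\TSP\ge 5/8$. Your Steps~1--3 verify exactly these properties for the TSP and reconstruct the Poissonization, subadditivity and de-Poissonization argument behind Steele's theorem.

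One correction in Step~3 concerns the Gaussian tail. The smoothness bound $L(F)\le L(G)+C\sqrt{|F\setminus G|}$, fed through the Hamming-distance isoperimetric inequality, yields only Rhee's tail $C\exp(-t^4/(Cn))$. That tail is not summable at $t=n^{1/4}$. You can fix this in either of two ways:
\begin{itemize}
\item cite the Rhee--Talagrand Gaussian inequality as a separate theorem, since its proof needs more than smoothness; or
\item take $t=\varepsilon\sqrt n$, which makes Rhee's tail summable.
\end{itemize}
Your Efron--Stein alternative is fully sound. It in fact gives $\Var L_n=O(1)$, because replacing one point changes $L$ by at most twice a nearest-neighbor distance, whose second moment is $O(1/n)$.
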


While the exact value of $\beta_\TSP$ is still unknown, empirical studies suggest that $\beta_\TSP \approx 0.71$ \citep{johnson1996asymptotic, choi2022review}.
Lower and upper bounds are known as follows:
\begin{equation}
0.6277 \leq \beta_\TSP < 0.9038
\end{equation}
where the lower bound is by \citet{gaudio2020improved} and the upper bound is by \citet{carlsson2025new}.

The Beardwood–Halton–Hammersley (BHH) theorem has been widely used and extended in continuous approximation models in transportation \citep{daganzo1984distance,daganzo1984length}. 
Applications include location problems \citep{carlsson2022continuous}, the location-or-routing problem \citep{arslan2021location}, shared electric scooter rebalancing \citep{osorio2021optimal}, bike-sharing market expansion \citep{fu2022bike}, and same-day delivery systems \citep{banerjee2023has}. 
Readers are referred to the review articles by \citet{langevin1996continuous} and \citet{ansari2018advancements} for historical perspectives and a broader overview of applications.

In this paper, we aim to establish the existence of a BHH-type limit and derive explicit upper and lower bounds for a collaborative delivery problem known as the Traveling Salesman Problem with Drone (TSPD) defined in two-dimensional space \citep{murray2015flying, agatz2018optimization}.
The TSPD utilizes a truck and a drone to fulfill the delivery needs, while allowing the drone to launch from and land on the truck. 
Establishing asymptotic bounds for TSPD is crucial for two reasons.
First, it enables rapid, formula-based cost estimation for strategic logistics systems design as demonstrated in the continuous-approximation literature.
Second, it provides a rigorous baseline for benchmarking large-scale heuristics, metaheuristics, or deep learning methods, where exact solutions are unattainable.

This study is closely related to \citet{carlsson2018coordinated} and \citet{jones2026coordinated}, which analyze the asymptotic behavior of coordinated truck-drone delivery systems.
In their models, the drone launches from and rendezvouses with the truck at optimally chosen \emph{continuous} points along the truck path.
In contrast, the TSPD restricts launching and landing to \emph{discrete} points---the customer nodes.
This discreteness introduces additional synchronization constraints while reflecting practical constraints imposed by the truck's road network.
Analyzing the characteristics of the connections among those discrete points, we establish a BHH-type asymptotic limit and derive lower and upper bounds for the coordinated delivery.
For \citet{carlsson2018coordinated}, the analytical bounds are derived under a continuous approximation in which customer locations are represented by a spatial density.
Consequently, those bounds are not directly comparable to the bounds developed in this paper.
Nevertheless, their computational results provide an empirical reference for assessing the cost of discreteness.
\citet{jones2026coordinated} study a multiple-sidekick model, whose single-sidekick case falls within the continuous model described above.
While their lower-bound arguments remain valid for TSPD, their upper-bound constructions use continuous rendezvous locations and therefore do not yield feasible TSPD tours.

Our contributions are threefold.
First, we prove the almost sure convergence of the TSPD makespan using a nonmonotone subadditive Euclidean functional framework.
Second, we derive upper bounds by analyzing the properties of optimal TSPD solutions using a structured ring-based tour-construction framework. 
Most importantly, we prove that there always exists an optimal TSPD solution where the truck and drone never travel together.
This structural property allows us to rule out many candidate configurations and substantially reduce the computational effort required to evaluate upper bounds. 
Third, we derive lower bounds by leveraging nearest-neighbor distance distributions and the $k$-traveling salesman problem ($k$-TSP).
The resulting bound is tighter than existing lower bounds.
Our approach applies broadly to models in which the truck distance is measured by different metrics, such as the rectilinear and Euclidean norms.

The rest of this paper is organized as follows.
Section~\ref{sec_prelim} summarizes the results on the TSP.
Section~\ref{sec_tspd} introduces the TSPD and proves the existence of its asymptotic limit.
Sections~\ref{sec_upper_bounds} and~\ref{sec_lower_bounds} derive upper and lower bounds, respectively.
Section~\ref{sec_results} presents the numerical results.
Section~\ref{sec_mixed} extends the analysis to the Rectilinear-Euclidean mixed model, and
Section~\ref{sec_conclusion} concludes the paper.

\paragraph{Notation.} Throughout the paper, let $x_i\in\Rb^2$ denote a deterministic customer location and $\Xc=\{x_1,\ldots,x_n\}$ a finite deterministic customer set.
We use $X_i$ to denote a random customer location.
We use $\mathcal{L}$ for a generic real-valued functional on finite point sets in $\Rb^2$.
In particular, $\TSPD(\Xc)$ denotes the optimal TSPD makespan on $\Xc$.
The corresponding asymptotic constant is $\beta_\TSPD$.
When the drone speed factor $\alpha\geq1$ is made explicit, we write $\beta_\TSPD(\alpha)$.
Let $\|\cdot\|_T$ and $\|\cdot\|_D$ denote the truck and drone metrics,
respectively; both are norms on $\Rb^2$.
Unless otherwise stated, we assume that the drone metric is no larger
than the truck metric; that is,
\[
	\|p\|_D\leq \|p\|_T
	\qquad \text{for all } p\in\Rb^2.
\]
Finally, for a rectangle $\Qc\subset\Rb^2$, $\diam\Qc$ denotes the Euclidean length of its diagonal, equivalently
\[
	\diam\Qc=\sup\{\|x-y\|_2:x,y\in\Qc\}.
\]

\section{Preliminaries} \label{sec_prelim}

In this section, we provide a generalized result for the existence of the limit and summarize the approaches for bounding the limit in the TSP.
These results will provide a basis for the TSPD analysis later.

\subsection{The Existence of the Limit}

\citet{steele1981subadditive} generalized the approach of \citet{beardwood1959shortest} for more general settings to prove the existence of the limit and provided an axiomatic approach.
\begin{assumption}[\citealt{steele1981subadditive}] \label{assumption_steele}
We make the following assumptions on the function $\mathcal{L}(\cdot)$:
\begin{enumerate}[({A}1)]
\item \emph{Positive Homogeneity.} $\mathcal{L}(\{\lambda x_1, \ldots, \lambda x_n\}) = \lambda \mathcal{L}(\{x_1, \ldots, x_n\}) $ for all $\lambda>0$.

\item \emph{Translational Invariance.} $\mathcal{L}(\{x_1 + x, \ldots, x_n + x\}) = \mathcal{L}(\{x_1, \ldots, x_n\}) $ for any $x \in \Rb^2$.

\item \emph{Monotonicity.} $\mathcal{L}(\{x\} \cup \Sc) \geq \mathcal{L}(\Sc)$ for any $x \in \Rb^2$ and finite subset $\Sc \subset \Rb^2$.

\item \emph{Finite Variance.} $\Var(\mathcal{L}(\{X_1, \ldots, X_n\})) < \infty$ whenever $X_1, \ldots, X_n$ are independent and uniformly distributed on $[0, 1]^2$.

\item \emph{Subadditivity.} 
For any positive integer $m$, let $\{\Qc_i : i = 1, 2, \ldots, m^2\}$ be a partition of $[0, 1]^2$ into squares of length $1/m$. 
Let $\Xc = \{x_1, \ldots, x_n\}$, and let $t \Qc_i = \{ t w : w \in \Qc_i\}$ for any $t>0$.  
There exists a constant $C > 0$ such that 
\[
	\mathcal{L}(\Xc \cap [0, t]^2) \leq
	\sum_{i=1}^{m^2} \mathcal{L}(\Xc \cap t \Qc_i) + C t m.
\]
\end{enumerate}
\end{assumption}

Assumptions (A1) and (A2) indicate that the function $\mathcal{L}(\cdot)$ is a Euclidean functional, of which $\TSP(\cdot)$ is a clear example.
We note, however, that Euclidean functionals from some combinatorial optimization problems are not monotone; for example, the minimum spanning tree problem.
For such problems, \citet{rhee1993limit} generalized the approach of \citet{steele1981subadditive} to prove the existence of the limit under conditions that do not require monotonicity.
\begin{assumption}[\citealt{rhee1993limit}] \label{assumption_rhee}
We make the following assumptions on the function $\mathcal{L}(\cdot)$:
\begin{description}
\item[\rm (A0)] $\mathcal{L}(\emptyset)=0$.

\item[\rm (A1) \it Positive Homogeneity.] $\mathcal{L}(\{\lambda x_1, \ldots, \lambda x_n\}) = \lambda \mathcal{L}(\{x_1, \ldots, x_n\})$ for all $\lambda>0$.

\item[\rm (A2) \it Translational Invariance.] $\mathcal{L}(\{x_1 + x, \ldots, x_n + x\}) = \mathcal{L}(\{x_1, \ldots, x_n\})$ for any $x \in \Rb^2$.

\item[\rm (A5$'$) \it Geometric Subadditivity.]
For $m=2$ or $m=3$, let $\{\Qc_i : i = 1, 2, \ldots, m^2\}$ be a partition of $[0,1]^2$ into squares of side length $1/m$.
There exists a constant $C'>0$ such that, for every finite set $\Xc \subset [0,1]^2$ that does not meet the boundaries of the partition,
\[
	\mathcal{L}(\Xc) \leq
	\sum_{i=1}^{m^2} \mathcal{L}(\Xc \cap \Qc_i) + C' .
\]

\item[\rm (B1) \it Smoothness.]
There exists a constant $K > 0$ such that, for all finite sets $\Fc,\Gc \subset [0,1]^2$,
\[
	|\mathcal{L}(\Fc \cup \Gc)-\mathcal{L}(\Fc)|
	\leq K \sqrt{|\Gc|}.
\]
\end{description}
\end{assumption}

The smoothness condition (B1) replaces the monotonicity condition used in the approach of \citet{steele1981subadditive} and applies to nonmonotone Euclidean functionals such as the matching problem \citep{rhee1993limit}.

The following theorem is a generalized result:

\begin{theorem}[\citealt{rhee1993limit}] \label{thm_rhee}
Let $X_1, X_2, \ldots, X_n$ be a set of $n$ points, uniformly distributed on the unit box $[0, 1]^2$.
Suppose all conditions in Assumption \ref{assumption_rhee} hold.
There exists a constant $\beta(\mathcal{L})$ such that
\begin{equation}
	\lim_{n \to \infty} \frac{\mathcal{L}(\{X_1, \ldots, X_n\})}{\sqrt{n}} = \beta(\mathcal{L})
\end{equation}
almost surely.
\end{theorem}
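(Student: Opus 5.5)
The plan follows the classical route of \citet{steele1981subadditive} and \citet{beardwood1959shortest}: first prove convergence of the mean $\Eb\,\mathcal{L}(\{X_1,\ldots,X_n\})/\sqrt{n}$, then upgrade to almost sure convergence by a concentration argument. Since monotonicity is unavailable, the smoothness condition (B1) will take over both of its roles: comparing sample sizes, and controlling fluctuations.

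\textbf{Step 1 (Poissonization and the mean limit).} Let $\Pi_\lambda$ be a Poisson process of intensity $\lambda$ on $[0,1]^2$ and set $\phi(\lambda)=\Eb\,\mathcal{L}(\Pi_\lambda)$. Applying (A5$'$) with $m=2$ and rescaling each subsquare by (A1) and (A2) gives
\[
\phi(\lambda)\le 4\cdot\tfrac12\,\phi(\lambda/4)+C'.
\]
Here the restriction of $\Pi_\lambda$ to a subsquare of side $1/2$, rescaled to the unit square, is a Poisson process of intensity $\lambda/4$, and boundary points have probability zero. Iterating along $\lambda=4^k\lambda_0$ yields
\[
\frac{\phi(4^k\lambda_0)}{\sqrt{4^k\lambda_0}}\le \frac{\phi(\lambda_0)}{\sqrt{\lambda_0}}+O(\lambda_0^{-1/2}).
\]
This makes $\limsup$ and $\liminf$ of $\phi(\lambda)/\sqrt\lambda$ along such sequences comparable. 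The case $m=3$ allows us to mix geometric sequences $4^a9^b\lambda_0$, whose set is dense in ratio.

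To pass between nearby intensities I use (B1): $|\phi(\lambda)-\phi(\lambda')|\le K\,\Eb\sqrt{|\Pi_\lambda\triangle\Pi_{\lambda'}|}\le K\sqrt{|\lambda-\lambda'|}$ under the natural coupling. Hence $\phi(\lambda)/\sqrt\lambda$ is uniformly continuous on a logarithmic scale. Combined with the subadditive inequalities, a standard Fekete-type argument then gives $\phi(\lambda)/\sqrt\lambda\to\beta(\mathcal{L})$. We also need $\phi(\lambda)=O(\sqrt\lambda)$, which follows from (A0) and (B1) with $\Fc=\emptyset$. De-Poissonization is again by (B1): $|\mathcal{L}(\{X_1,\ldots,X_n\})-\mathcal{L}(\Pi_n)|\le K\sqrt{|N-n|}$ with $N\sim\Poisson(n)$, and the expectation of this error is $O(n^{1/4})=o(\sqrt n)$.

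\textbf{Step 2 (concentration).} Let $L_n=\mathcal{L}(\{X_1,\ldots,X_n\})$. Replacing one point changes $L_n$ by at most $2K$ by (B1), since removing and adding a single point each cost at most $K$. By the Azuma/McDiarmid bounded-difference inequality,
\[
P\bigl(|L_n-\Eb L_n|>\epsilon\sqrt n\bigr)\le 2\exp\bigl(-\epsilon^2 n/(8K^2)\bigr),
\]
which is summable in $n$. Borel--Cantelli then gives $(L_n-\Eb L_n)/\sqrt n\to0$ almost surely. Combined with Step 1, this yields the claim.

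\textbf{Main obstacle.} The delicate step is Step 1. Subadditivity only bounds $\phi$ from above along the specific scales $4^a9^b$, and without monotonicity we cannot fill the gaps by sandwiching between scales. I expect the real work to be combining the $m=2$ and $m=3$ inequalities with the $\sqrt{|\lambda-\lambda'|}$ continuity from (B1) to show that $\liminf$ and $\limsup$ of $\phi(\lambda)/\sqrt\lambda$ coincide. I would first try to fix $\lambda_0$ with $\phi(\lambda_0)/\sqrt{\lambda_0}$ near the $\liminf$, and then reach an arbitrary large $\lambda$ by a sequence of these scale moves, each step contributing error $O(\lambda_0^{-1/2})$, using continuity to cover the residual ratio gap. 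The concentration step is routine once (B1) is in hand.
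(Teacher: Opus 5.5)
Your Step 1 is sound, reading (A5$'$) as holding for both $m=2$ and $m=3$, as you do. The recursions $\phi(\lambda)\le 2\phi(\lambda/4)+C'$ and $\phi(\lambda)\le 3\phi(\lambda/9)+C'$ accumulate only $O(\lambda_0^{-1/2})$ error along any path from $\lambda_0$ to $4^a9^b\lambda_0$. Consecutive elements of $\{4^a9^b\}$ have ratios tending to $1$ because $\log 4/\log 9$ is irrational, and the coupling bound $|\phi(\lambda)-\phi(\lambda')|\le K\sqrt{|\lambda-\lambda'|}$ fills the remaining gaps. So $\phi(\lambda)/\sqrt\lambda$ converges, and de-Poissonization costs only $O(n^{1/4})$.

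The genuine gap is Step 2, which you call routine. With differences $c_i=2K$ in each of the $n$ coordinates, McDiarmid gives $P(|L_n-\Eb L_n|\ge t)\le 2\exp(-t^2/(2K^2n))$. At $t=\epsilon\sqrt n$ this is the constant $2\exp(-\epsilon^2/(2K^2))$. Your exponent $\epsilon^2 n/(8K^2)$ is what Azuma gives for deviations $t=\epsilon n$, not $t=\epsilon\sqrt n$. Order-one bounded differences control fluctuations only at scale $\sqrt n$, the same scale as $L_n$ itself. The same holds for Efron--Stein, which yields only $\Var L_n\le 2K^2n$. So neither says anything about $L_n/\sqrt n$, and Borel--Cantelli has nothing to act on.

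What is missing is the full strength of (B1), not just its one-point case. If $x,y\in([0,1]^2)^n$ differ in $j$ coordinates, removing and re-adding those points gives $|\mathcal{L}(x)-\mathcal{L}(y)|\le 2K\sqrt j$. Thus $L_n$ is $\tfrac12$-H\"older in the Hamming distance $d_H$.

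Let $M$ be a median of $L_n$ and $A=\{L_n\le M\}$. Then $L_n\ge M+t$ forces $d_H(X,A)\ge (t/2K)^2$. The map $x\mapsto d_H(x,A)$ has bounded differences $1$. McDiarmid's lower tail together with $P(A)\ge\tfrac12$ gives $\Eb\, d_H(X,A)\le\sqrt{n\log 2/2}$. The upper tail then gives $P(L_n\ge M+t)\le\exp(-t^4/(32K^4n))$ once $t\ge CKn^{1/4}$, and the lower tail is symmetric.

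At $t=\epsilon\sqrt n$ the bound is $\exp(-\epsilon^4 n/(32K^4))$, which is summable. The mean--median gap is $O(n^{1/4})=o(\sqrt n)$, and Borel--Cantelli finishes the proof. This $t^4/n$ tail is how smoothness yields almost sure (indeed complete) convergence in \citet{rhee1993limit}, whose result the paper quotes without proof. Your assessment of where the difficulty lies is therefore reversed: the mean limit is the standard part, and the concentration step is where (B1) must be used non-trivially.
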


The subadditivity condition in (A5$'$) is the most crucial part in guaranteeing the existence of the limit.
Later, \citet{yukich2006probability} provided a simpler sufficient form of subadditivity:
\begin{description}
\item[\rm (A5$''$) \it Rectangle Form of Geometric Subadditivity.]
Let $\Xc = \{x_1, \ldots, x_n\}$.
If a rectangle $\Qc$ is divided into two rectangles $\Qc_1$ and $\Qc_2$, there exists $C'' > 0$ such that
\[
	\mathcal{L}( \Xc \cap \Qc )
	\leq
	\mathcal{L}( \Xc \cap \Qc_1 )
	+
	\mathcal{L}( \Xc \cap \Qc_2 )
	+
	C'' \diam\Qc.
\]
\end{description}

Repeated application of (A5$''$) implies (A5$'$) for the $m=2$ and $m=3$ partitions of the unit square.

\subsection{Poissonization}

Consider $n$ points, $X_1, \ldots, X_n$, uniformly distributed in the unit box $[0, 1]^2$.
Due to the following lemma, we can focus on bounding the expected value instead of dealing with the `almost surely' condition:
\begin{lemma}[\citealt{beardwood1959shortest}]
\begin{equation}
\lim_{n \to \infty} \frac{\Eb[\TSP(\{X_1, X_2, \ldots, X_n\})]}{\sqrt{n}} = \beta_\TSP.
\end{equation}
\end{lemma}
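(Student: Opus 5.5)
The plan is to deduce the statement from the almost sure convergence in the BHH theorem, upgraded to convergence in mean by a uniform integrability argument. Write $T_n=\TSP(\{X_1,\ldots,X_n\})$. The BHH theorem gives $T_n/\sqrt{n}\to\beta_\TSP$ almost surely. Convergence of expectations follows as soon as the family $\{T_n/\sqrt{n}\}_{n\ge1}$ is uniformly integrable. The cleanest route to uniform integrability is a uniform second-moment bound, $\sup_n \Eb[T_n^2]/n<\infty$.

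To obtain the second-moment bound, we do not use a deterministic estimate valid for all configurations, since that would introduce an extra additive constant. Instead we use the classical worst-case bound of Few: for any $n$ points in $[0,1]^2$, the optimal tour has length at most $\sqrt{2n}+7/4$. This bound is deterministic, so $T_n/\sqrt{n}\le \sqrt{2}+7/(4\sqrt{n})\le \sqrt{2}+2$ for all $n\ge1$ and every realization. Hence $\{T_n/\sqrt{n}\}$ is uniformly bounded, which is stronger than uniform integrability.

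If we want to avoid citing Few, the same deterministic bound, up to constants, can be built directly. Partition $[0,1]^2$ into $k$ horizontal strips of height $1/k$ with $k\approx\sqrt{n}$. Visit the points strip by strip in a boustrophedon order, each strip sorted by $x$-coordinate. The horizontal travel is at most $1$ per strip, for a total of $k$. The vertical travel is at most $1/k$ per point, for a total of $n/k$, plus an $O(1)$ closing cost. Choosing $k=\lceil\sqrt{n}\rceil$ gives a tour of length at most $c\sqrt{n}+c'$. Since the optimal tour is no longer than this one, $T_n/\sqrt{n}$ is again uniformly bounded.

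With uniform boundedness in hand, the bounded convergence theorem applied to $T_n/\sqrt{n}\to\beta_\TSP$ almost surely yields $\Eb[T_n]/\sqrt{n}\to\beta_\TSP$, which is the claim.

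There is no serious obstacle in this direction; the only point needing care is that the deterministic bound must hold uniformly in $n$ and in the configuration. An alternative, which is how BHH actually argue, is the reverse direction: first prove convergence of $\Eb[T_n]/\sqrt{n}$ via Poissonization and subadditivity, then use concentration to get the almost sure result. Since the BHH theorem is taken as given, the dominated convergence route above is the natural and shortest proof.
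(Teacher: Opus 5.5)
Your argument is correct, and it is essentially the route the paper itself takes right after Theorem~\ref{thm_tspd_limit} to pass from almost sure convergence to convergence in expectation for the TSPD: a deterministic $O(\sqrt{n})$ worst-case bound via Few, followed by bounded convergence (for this TSP lemma the paper only cites BHH). One cosmetic fix: delete the sentence saying you avoid a configuration-independent deterministic estimate because of the additive constant---Few's bound is exactly such an estimate, and its constant is harmless since $7/(4\sqrt{n})\le 7/4$ (if one reads Few's result as a bound for open paths, closing the path adds at most $\sqrt{2}$).
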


Further, we notice that considering $n$ points in $[0, 1]^2$ is the same as considering the Poisson process with intensity $n$ \citep{beardwood1959shortest}.
Using this observation, we can find an upper bound on $\beta_\TSP$.

\begin{figure}
\centering
\begin{subfigure}[t]{0.48\textwidth}
    \centering
    \includegraphics[width=0.8\linewidth]{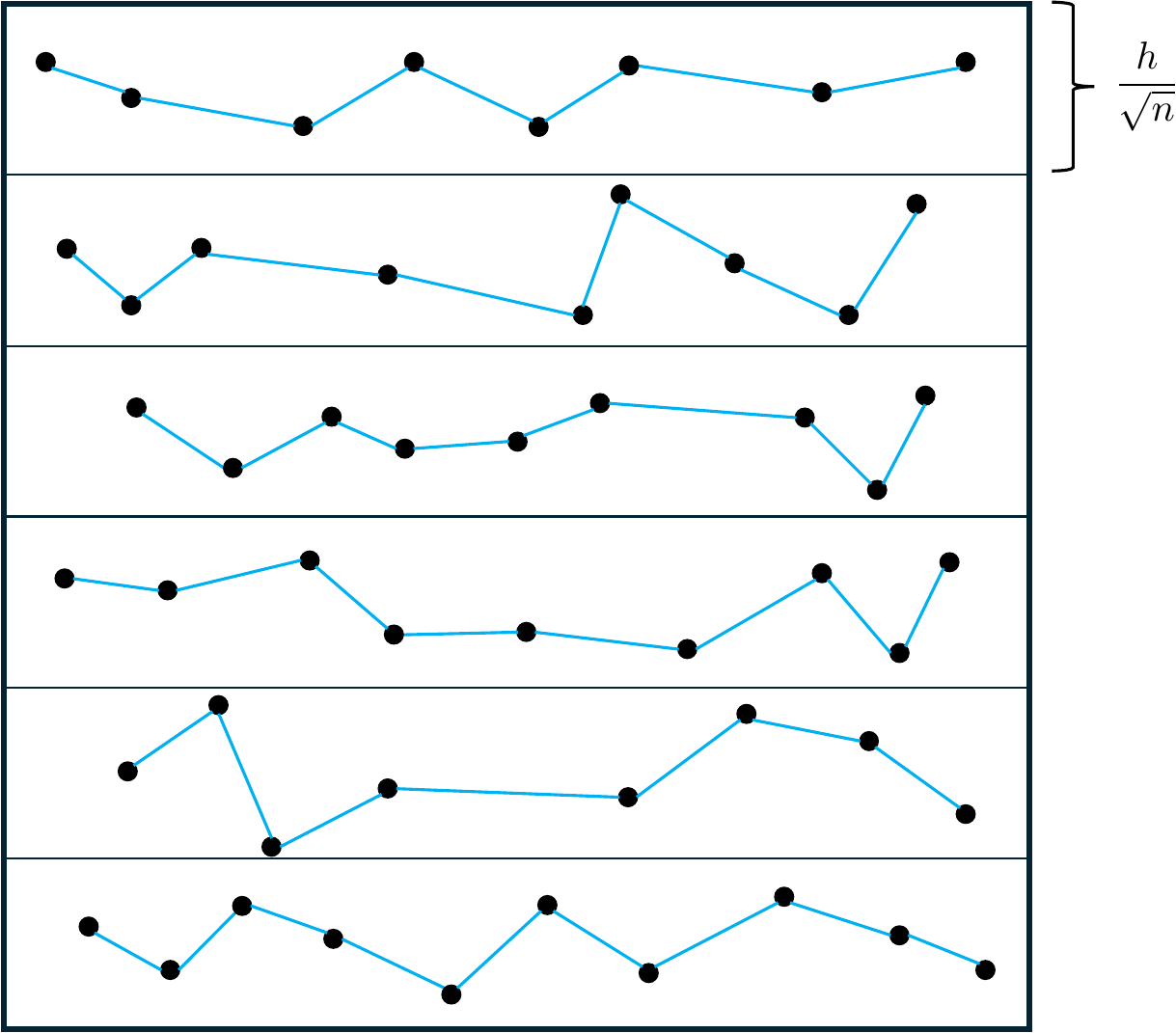}
    \caption{Divide the unit square $[0,1]^2$ by strips with equal height of $h/\sqrt{n}$ and connect points within each strip left to right.}
    \label{fig_tsp1}
\end{subfigure}\hfill
\begin{subfigure}[t]{0.48\textwidth}
    \centering
    \includegraphics[width=0.8\linewidth]{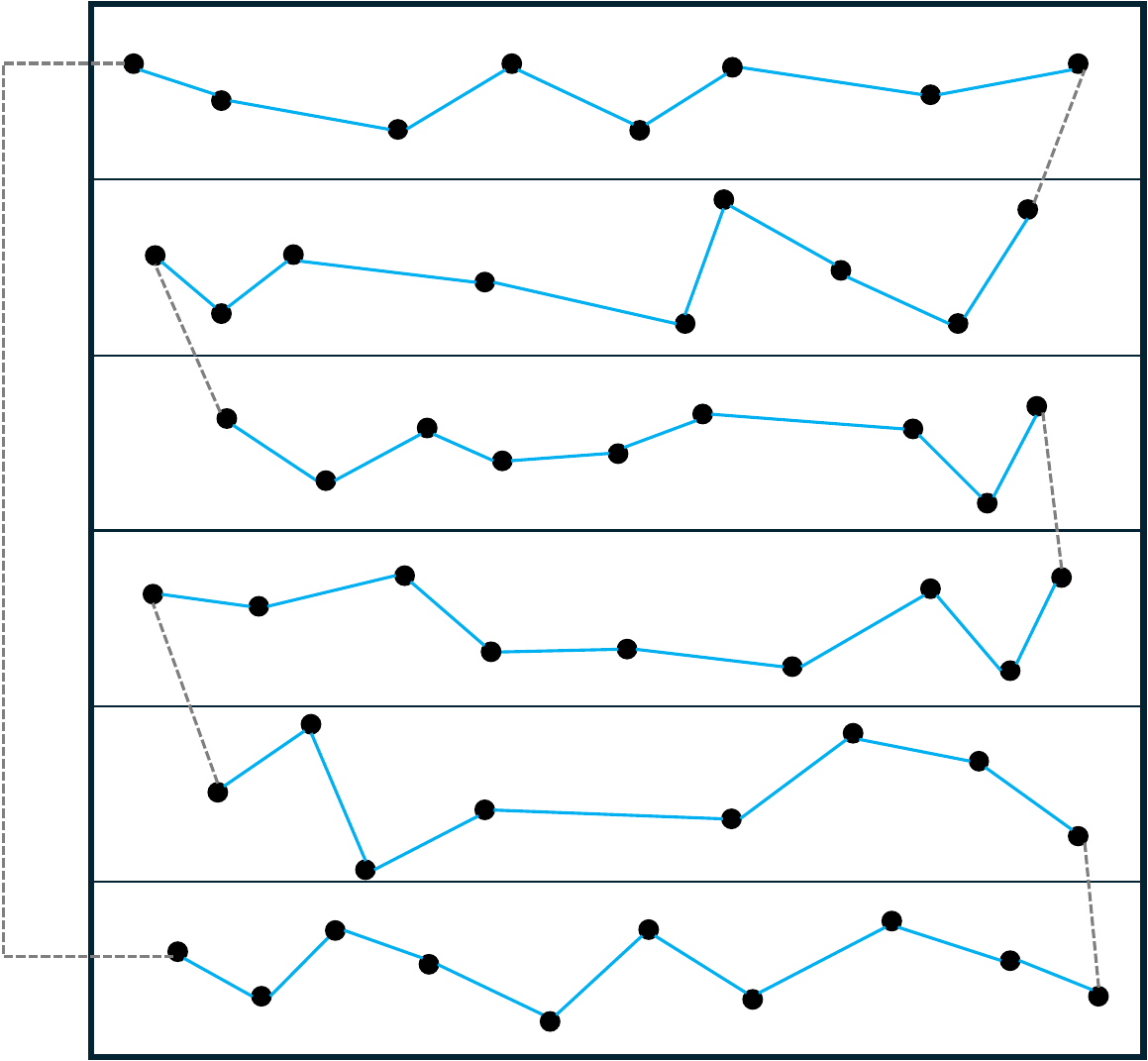}
    \caption{To complete a TSP tour, connect paths by extra lines, shown in dotted lines, whose total length is $o(\sqrt{n})$ almost surely.}
    \label{fig_tsp2}
\end{subfigure}

\caption{Creating a feasible TSP tour that passes through $n$ points in $[0,1]^2$. [Reproduced from Figure 1 of \citet{carlsson2025new}]}
\label{fig_tspd_comparison}
\end{figure}

Consider strips of the box with height $h / \sqrt{n}$ for some constant $h > 0$, as shown in Figure~\ref{fig_tspd_comparison}.
There are $\sqrt{n}/h$ strips.
In each strip, the number of points is $\frac{n}{\sqrt{n}/h} = h\sqrt{n}$ on average.
Let $m = h\sqrt{n}$.
In each strip, we will connect the points by straight lines in the order of their horizontal coordinates. 

Consider the horizontal coordinates first.
For any arbitrary interval of length $\delta$, a point lies inside this interval with probability $\delta$ and outside with probability $1-\delta$.
Therefore, we can consider this as a Bernoulli trial. 
The number of points in this interval is distributed as $\Binomial(m, \delta)$.
When $m$ is large and $\delta$ is small, it is well known that the Binomial distribution behaves like $\Poisson(m\delta)$.
Since intervals of length $\delta$ are independent of each other, we can now consider the entire interval $[0, 1]$ as a Poisson point process with intensity $m$. 

This implies that the distance between two consecutive horizontal coordinates is distributed as $\Exponential(m)$.
Let us denote the horizontal coordinate of point $X_i$ by $X^{(1)}_i$.
Considering two arbitrary consecutive points $X_i$ and $X_{i+1}$, we have:
\[
	X^{(1)}_{i+1} - X^{(1)}_i \ \sim \ \Exponential(h\sqrt{n}),
\]
or, in the \emph{unscaled} form,
\[
	Z \ \sim \ \Exponential(1),
\]
where $Z = h\sqrt{n} (X^{(1)}_{i+1} - X^{(1)}_i)$.

Similarly, let us consider the vertical coordinates, denoted by $X^{(2)}_i$. 
In each strip of height $h/\sqrt{n}$, it is clear that $X^{(2)}_i = \frac{h}{\sqrt{n}} U$, where 
\[
	U \ \sim \ \Uniform([0, 1]).
\]

\subsection{Upper Bounds}

Using these observations, we find an upper bound on $\beta_\TSP$.
First, we note that we do not need to consider the connections between the strips.
Since there are $\sqrt{n}/h$ number of strips, the total length of those connections is $o(\sqrt{n})$; therefore, it does not impact $\beta$.

For two arbitrary consecutive points $(X^{(1)}_i, X^{(2)}_i)$ and $(X^{(1)}_{i+1}, X^{(2)}_{i+1})$:
\[
	\Eb \left\lVert \begin{pmatrix} 
		X^{(1)}_{i+1} - X^{(1)}_i \\ 
		X^{(2)}_{i+1} - X^{(2)}_i 
	\end{pmatrix}\right\rVert 
	= \Eb \left\lVert \begin{pmatrix} \frac{1}{h\sqrt{n}} Z \\ \frac{h}{\sqrt{n}} (U_1 - U_0) \end{pmatrix} \right\rVert  
	= \frac{1}{h\sqrt{n}} \Eb \left\lVert \begin{pmatrix} Z \\  h^2(U_1 - U_0) \end{pmatrix} \right\rVert.
\]
Recall that there are $\sqrt{n}/h$ number of strips, and, in each strip, the number of points on average is $h\sqrt{n}$.
Therefore, we obtain, for large $n$,
\begin{align*}
\frac{\Eb[\mathsf{TSP}(\{X_1, X_2, \ldots, X_n\})]}{\sqrt{n}}
&\leq \frac{1}{\sqrt{n}} \cdot \frac{\sqrt{n}}{h} \cdot (h\sqrt{n}) \cdot \frac{1}{h\sqrt{n}} \Eb \left\lVert \begin{pmatrix} Z \\ h^2(U_1 - U_0) \end{pmatrix} \right\rVert \\
&= \frac{1}{h} \Eb \left\lVert \begin{pmatrix} Z \\ h^2(U_1 - U_0) \end{pmatrix} \right\rVert \\
&= \frac{1}{h} \int_0^\infty \int_0^1 \int_0^1 e^{-z} \sqrt{z^2 + h^4(u-v)^2} \dv \du \dz.
\end{align*}
We can write
\[
\beta_\TSP \leq \inf_{h > 0} \frac{1}{h} \int_0^\infty \int_0^1 \int_0^1 e^{-z} \sqrt{z^2 + h^4(u-v)^2} \dv \du \dz,
\]
which is minimized near $h=\sqrt{3}$.
The upper bound can be numerically evaluated to be 0.92116.
\citet{steinerberger2015new} later improved this bound as
\[
	\beta_\TSP \leq  0.92116 - \frac{9}{16} 10^{-6} \approx 0.9211594,
\]
and \citet{carlsson2025new} improved further as
\[
	\beta_\TSP < 0.9038.
\]

\subsection{Lower Bounds}
\label{sec_tsp_lower_bounds}

To find a lower bound, \citet{beardwood1959shortest} utilized the fact that each point is connected to two other points. 
If we assume that each point is connected to the nearest and the second-nearest points, then the total length of such connections will provide us with a lower bound. 
Of course, it may yield disconnected components, resulting in an infeasible solution.
The approach is well described in \citet{steinerberger2015new}.

\begin{lemma}[\citealt{steinerberger2015new}]
\label{lem_nearest}
Let $\Pc_n$ be a Poisson process on $\Rb^2$ with intensity $n$. %
Then, for any fixed point $p \in \Rb^2$, the probability distribution of the distance between $p$ and the nearest point in $\Pc_n$ is given by $f_1(r) = 2\pi n r e^{-\pi n r^2}$ and its expectation is $\frac{1}{2\sqrt{n}}$.
The distance to the second-nearest point is distributed by $f_2(r) = 2\pi^2 n^2 r^3 e^{-\pi n r^2}$ and its expectation is $\frac{3}{4\sqrt{n}}$.
\end{lemma}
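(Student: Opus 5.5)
The plan is to compute the distribution of the $k$-th nearest point distance directly from the void probabilities of the Poisson process, and then integrate. Fix $p\in\Rb^2$ and let $R_k$ denote the distance from $p$ to its $k$-th nearest point of $\Pc_n$. The disc $B(p,r)$ has area $\pi r^2$, so $N(r)=|\Pc_n\cap B(p,r)|\sim\Poisson(\pi n r^2)$. The event $\{R_k>r\}$ is exactly the event $\{N(r)\le k-1\}$. Hence
\[
P(R_1>r)=e^{-\pi n r^2},\qquad P(R_2>r)=e^{-\pi n r^2}\bigl(1+\pi n r^2\bigr).
\]

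Next I will differentiate these tail probabilities in $r$ to get the densities. For $R_1$ this gives $f_1(r)=2\pi n r e^{-\pi n r^2}$ immediately. For $R_2$,
\[
f_2(r)=-\frac{d}{dr}\Bigl[e^{-\pi n r^2}(1+\pi n r^2)\Bigr]=2\pi n r e^{-\pi n r^2}(1+\pi n r^2)-2\pi n r e^{-\pi n r^2}.
\]
The terms $\pm 2\pi n r e^{-\pi n r^2}$ cancel, leaving $f_2(r)=2\pi^2n^2r^3e^{-\pi n r^2}$. These formulas do not depend on $p$, by stationarity of the process.

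For the expectations, I will substitute $s=\pi n r^2$, so that $r=\sqrt{s/(\pi n)}$ and $dr=\tfrac{1}{2\sqrt{\pi n s}}\,ds$. This turns both integrals into Gamma integrals:
\[
\Eb R_1=\frac{\Gamma(3/2)}{\sqrt{\pi n}}=\frac{\sqrt\pi/2}{\sqrt{\pi n}}=\frac{1}{2\sqrt n},\qquad
\Eb R_2=\frac{\Gamma(5/2)}{\sqrt{\pi n}}=\frac{3\sqrt\pi/4}{\sqrt{\pi n}}=\frac{3}{4\sqrt n}.
\]
An equivalent route for the means is the tail formula $\Eb R=\int_0^\infty P(R>r)\,dr$, which uses the Gaussian integral $\int_0^\infty e^{-\pi n r^2}dr=\tfrac{1}{2\sqrt n}$ together with its second moment.

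There is no real obstacle here. The only step needing care is justifying the identity $\{R_k>r\}=\{N(r)\le k-1\}$. This requires that the process have no ties in distance, which holds almost surely for a Poisson process, and it also gives the continuity needed to differentiate the tail probabilities.
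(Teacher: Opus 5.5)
Your proof is correct. The paper gives no proof of this lemma and simply quotes it from \citet{steinerberger2015new}. Your derivation is the standard computation behind that citation: the void probabilities $P(R_k>r)=P(N(r)\le k-1)$ with $N(r)\sim\mathrm{Poisson}(\pi n r^2)$, differentiation, and the Gamma integrals $\Gamma(3/2)$ and $\Gamma(5/2)$.

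One small correction to your closing remark. If you take $B(p,r)$ closed and order distances with multiplicity, the identity $\{R_k>r\}=\{N(r)\le k-1\}$ holds pointwise, so you do not need to exclude ties. Differentiability of the tails then comes from the explicit formulas, not from the absence of ties.
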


Consequently, Lemma~\ref{lem_nearest} yields that
\[
	\TSP(\{X_1, \ldots, X_n\}) \geq \bigg(\frac{1}{2} + \frac{3}{4}\bigg) \frac{1}{\sqrt{n}} \frac{n}{2} = \frac{5}{8} \sqrt{n};
\]
hence,
\[
	\beta_\TSP \geq \frac{5}{8},
\]
which was the result of \citet{beardwood1959shortest}.
Later \citet{steinerberger2015new} and \citet{gaudio2020improved}, respectively, improved the lower bound to 
\[
	\beta_\TSP \geq \frac{5}{8} + \frac{19}{10368} > 0.6268.
\]
and
\[
	\beta_\TSP \geq \frac{5}{8} + \frac{1}{2} \bigg(\frac{19}{10368} \bigg)
	+ \frac{1}{2}\bigg(
		\frac{3072\sqrt{2} - 4325}{5376}
	\bigg)
	> 0.6277.
\]

\section{The TSPD Case}
\label{sec_tspd}

In this section, we first provide a formal definition of the TSPD and then show that the limit exists for the TSPD.

\subsection{The TSPD Defined}
For any finite point set $\Xc\subseteq\Rb^2$, the goal of the TSPD problem is to find coordinated truck and drone routes that minimize the total makespan of operations.
We distinguish node types: \emph{truck nodes} are visited only by the truck, \emph{drone nodes} are served only by the drone, and \emph{combined nodes} are synchronization points for both vehicles.

\begin{figure}
\begin{subfigure}[t]{0.23\textwidth}
    \centering
    \includegraphics[scale=0.5]{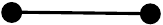}
    \caption{A straight ring}
    \label{fig_a_straight_ring}
\end{subfigure}\hfill
\begin{subfigure}[t]{0.23\textwidth}
    \centering
    \includegraphics[scale=0.5]{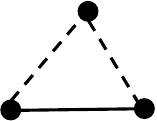}
    \caption{A triangle ring}
    \label{fig_a_triangle_ring}
\end{subfigure}\hfill
\begin{subfigure}[t]{0.23\textwidth}
    \centering
    \includegraphics[scale=0.5]{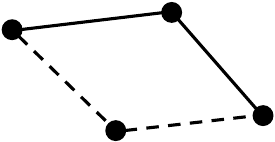}
    \caption{A quartet ring}
    \label{fig_a_quartet_ring}
\end{subfigure}\hfill
\begin{subfigure}[t]{0.23\textwidth}
    \centering
    \includegraphics[scale=0.5]{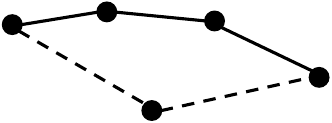}
    \caption{A quintet ring}
    \label{fig_a_quintet_ring}
\end{subfigure}\hfill
\caption{Rings. Solid lines are traveled by the truck, and dotted lines are traveled by the drone. In the straight ring, the truck travels with the drone attached.}
\label{fig_rings}
\end{figure}

We adopt the term \emph{rings} as the basic unit of TSPD solution constructions.
Examples of rings are shown in Figure~\ref{fig_rings}, and the definition of a ring is given below.

\begin{definition}
A \emph{ring} is defined by two distinct combined nodes serving as the start and end nodes, with at most one drone node and any number of truck nodes between them. 
For each ring $r$, the truck follows a path $\Tc_r$, and if a drone node is present, the drone follows a path $\Dc_r$.
\end{definition}

Define the truck and drone path costs as
\[
t(\Tc_r) = \sum_{(i,j)\in \Tc_r} \|x_i - x_j\|_T,
\qquad
d(\Dc_r) = \sum_{(i,j)\in \Dc_r} \|x_i - x_j\|_D.
\]
The cost of a ring $r$ is then:
\begin{align*}
c(r) 
& = c(\Xc_r) \\
& = \min_{\Tc_r, \Dc_r} c(\Tc_r, \Dc_r) \\
& = \min_{\Tc_r, \Dc_r} 
\begin{cases}
\max\{  t(\Tc_r),  d(\Dc_r) \}, & \text{if $r$ contains a drone node}, \\
t(\Tc_r), & \text{otherwise},
\end{cases}
\end{align*}
where we abuse notation by defining $\Xc_r$ as the set of points in ring $r$, with the first and last points being the two combined nodes.

A feasible solution to the TSPD problem is a set of rings $\Rc(\Xc)$ that together cover all points in $\Xc$. 
The rings in $\Rc(\Xc)$ are concatenated through the combined nodes in the TSPD solution to form a single cyclic tour.
Following the literature, in a feasible TSPD solution, we assume that each point in $\Xc$ is served exactly once, either by the truck, the drone, or both.
Hence, if a point in $\Xc$ is visited by both the truck and the drone, then it is a combined node shared by exactly two adjacent rings in $\Rc(\Xc)$.
Otherwise, the point belongs to exactly one ring.

The objective is to minimize the total cost:
\[
\TSPD(\Xc)  =  \min_{\Rc(\Xc)} \sum_{r\in\Rc(\Xc)} c(r),
\]
which is the makespan.

For degenerate cases, we use the following convention.
If $|\Xc|=0$ or $|\Xc|=1$, then $\TSPD(\Xc)=0$.
If $\Xc=\{x,y\}$, then $\TSPD(\Xc)=2\|x-y\|_T$, corresponding to the straight-ring tour between the two points.

\subsection{The Existence of the Limit}

We present the existence of the limit for the TSPD case.
Unlike the continuous model \citep{jones2026coordinated}, the TSPD functional need not be monotone because adding a customer can create a new synchronization point.
Figure~\ref{fig_tspd_nonmonotonicity} illustrates the following example.
Consider Euclidean truck distances and drone distances equal to half the Euclidean distances.
Let $a=(0,0)$, $b=(0,1)$, and $c=(1/2,0)$.
By the convention above, $\TSPD(\{a,b\})=2$.
For $\{a,b,c\}$, consider a triangle ring from $a$ to $c$ that serves $b$ by drone, followed by a straight ring from $c$ to $a$.
\mbox{This feasible solution has cost}
\nopagebreak[4]
\[
\max\left\{\frac{1}{2},\frac{1}{2}\big(\|a-b\|_2+\|b-c\|_2\big)\right\}+\frac{1}{2}
=1+\frac{\sqrt{5}}{4}<2.
\]
Thus, $\TSPD(\{a,b,c\})<\TSPD(\{a,b\})$.
We therefore verify Assumption~\ref{assumption_rhee}, which does not require monotonicity, and apply Theorem~\ref{thm_rhee}.

\begin{figure}[t]
\centering
\begin{subfigure}[t]{0.38\textwidth}
    \centering
    \begin{tikzpicture}[line width=0.8pt,
        point/.style={circle,fill=black,inner sep=2.6pt}]
        \node[point,label=below:$a$] (a) at (0,0) {};
        \node[point,label=left:$b$] (b) at (0,2) {};
        \draw (a) -- (b);
    \end{tikzpicture}
    \caption{The truck-only tour for $\{a,b\}$; the edge is traversed twice.}
    \label{fig_nonmonotone_ab}
\end{subfigure}\hfill
\begin{subfigure}[t]{0.48\textwidth}
    \centering
    \begin{tikzpicture}[line width=0.8pt,
        drone/.style={dash pattern=on 4pt off 3pt},
        point/.style={circle,fill=black,inner sep=2.6pt}]
        \node[point,label=below:$a$] (a) at (0,0) {};
        \node[point,label=below:$c$] (c) at (1,0) {};
        \node[point,label=left:$b$] (b) at (0,2) {};
        \draw (a) -- (c);
        \draw[drone] (a) -- (b) -- (c);
    \end{tikzpicture}
    \caption{A feasible solution for $\{a,b,c\}$; the truck traverses $ac$ twice.}
    \label{fig_nonmonotone_abc}
\end{subfigure}
\caption{A counterexample to monotonicity. Solid and dashed lines represent truck and drone travel, respectively.}
\label{fig_tspd_nonmonotonicity}
\end{figure}

To show that $\TSPD(\cdot)$ satisfies (A5$''$) and (B1) in Assumption~\ref{assumption_rhee}, we propose a procedure, illustrated in Figure~\ref{fig_tspd_subadditivity}, for combining two local TSPD tours on disjoint customer sets into a feasible tour for $\Xc\cap\Qc$.
The procedure consists of three steps: exposing straight rings, removing them, and connecting the resulting tours (see Figures~\ref{fig_tspd3} and~\ref{fig_tspd4}).
For the constructed solution $\Rc'$ shown in the figure, we obtain
\begin{align*}
	\TSPD(\Xc) 
	&= c(\Rc) \\
	&\leq c(\Rc') \\
	&= c(\Rc_1) + c(\Rc_2) + \Delta+ \|a-d\|_T + \|b-c\|_T- \|a-b\|_T - \|c-d\|_T \\
	&\leq c(\Rc_1) + c(\Rc_2) + \Delta+ \|a-d\|_T + \|b-c\|_T \\
	&= \TSPD(\Xc_1) + \TSPD(\Xc_2) + \Delta+ \|a-d\|_T + \|b-c\|_T,
\end{align*}
Here, $\Delta$ is the cost increase from exposing the straight rings $ab$ and $cd$.
If $r_1$ and $r_2$ denote the two chosen rings in Figure~\ref{fig_tspd2}, and $\hat r_1$ and $\hat r_2$ denote the corresponding residual rings after exposing $ab$ and $cd$, then it follows that
\[
\Delta=\bigl(c(\hat r_1)+\|a-b\|_T-c(r_1)\bigr)+\bigl(c(\hat r_2)+\|c-d\|_T-c(r_2)\bigr)\ge 0,
\]
where the last inequality follows from the triangle inequality and the drone-metric dominance condition.
Hence, for the given instance, the additional cost of the procedure is bounded by a constant.
Interestingly, this holds for all possible instances.
Indeed, the proof of Theorem~\ref{thm_tspd_limit} shows that the same type of bound holds with a constant independent of $\Xc$, and uses this fact directly to verify that (A5$''$) and (B1) hold for $\TSPD(\cdot)$.

\begin{figure}
\centering
\begin{subfigure}[t]{0.48\textwidth}
    \centering
    \includegraphics[width=0.9\linewidth]{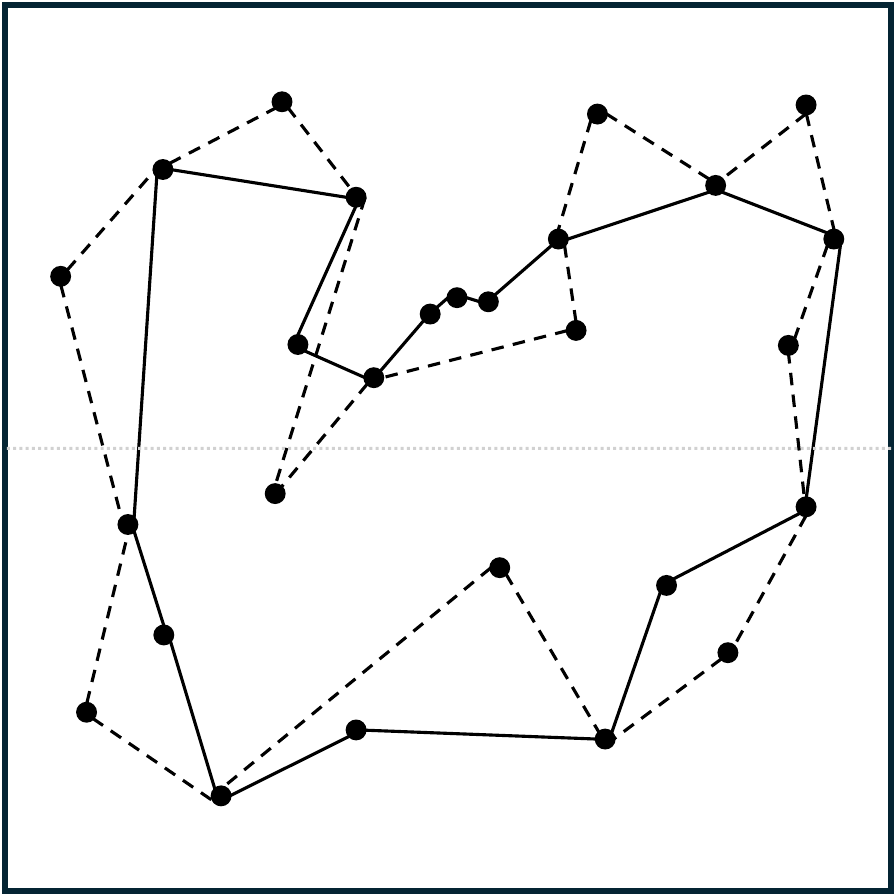}
    \caption{An optimal TSPD solution for $\Xc$.}
    \label{fig_tspd1}
\end{subfigure}\hfill
\begin{subfigure}[t]{0.48\textwidth}
    \centering
    \includegraphics[width=0.9\linewidth]{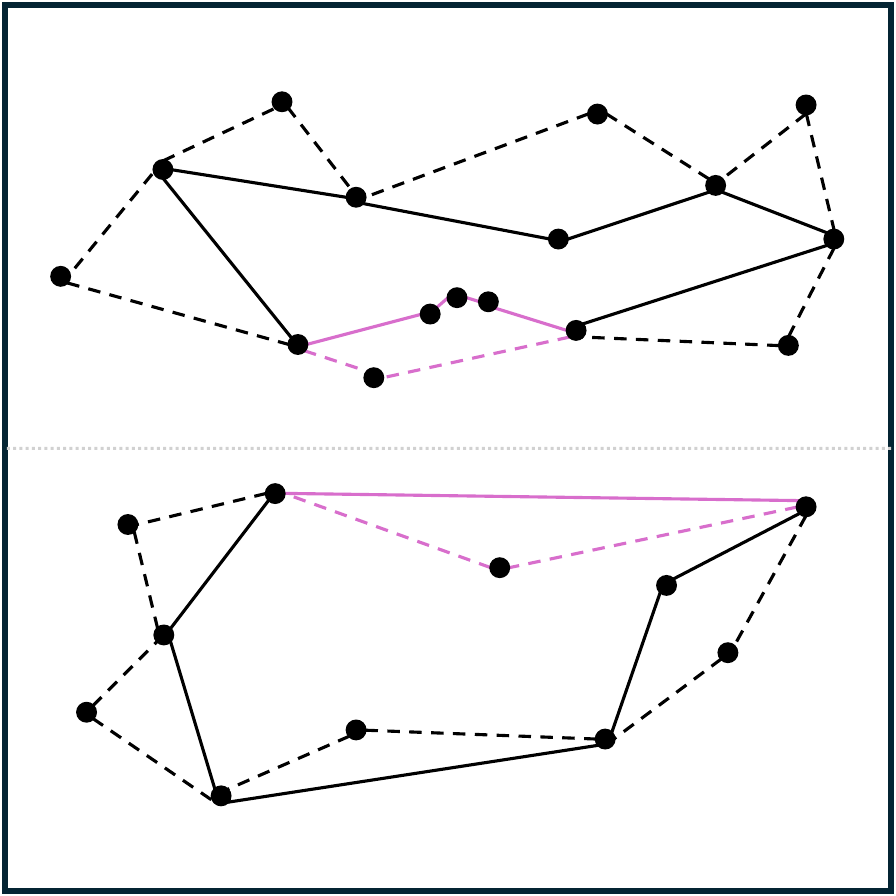}
    \caption{Optimal TSPD solutions, $\Rc_1$ and $\Rc_2$, for $\Xc_1$ and $\Xc_2$, with two chosen rings (shown in purple).}
    \label{fig_tspd2}
\end{subfigure}
\begin{subfigure}[t]{0.48\textwidth}
    \centering
    \includegraphics[width=0.9\linewidth]{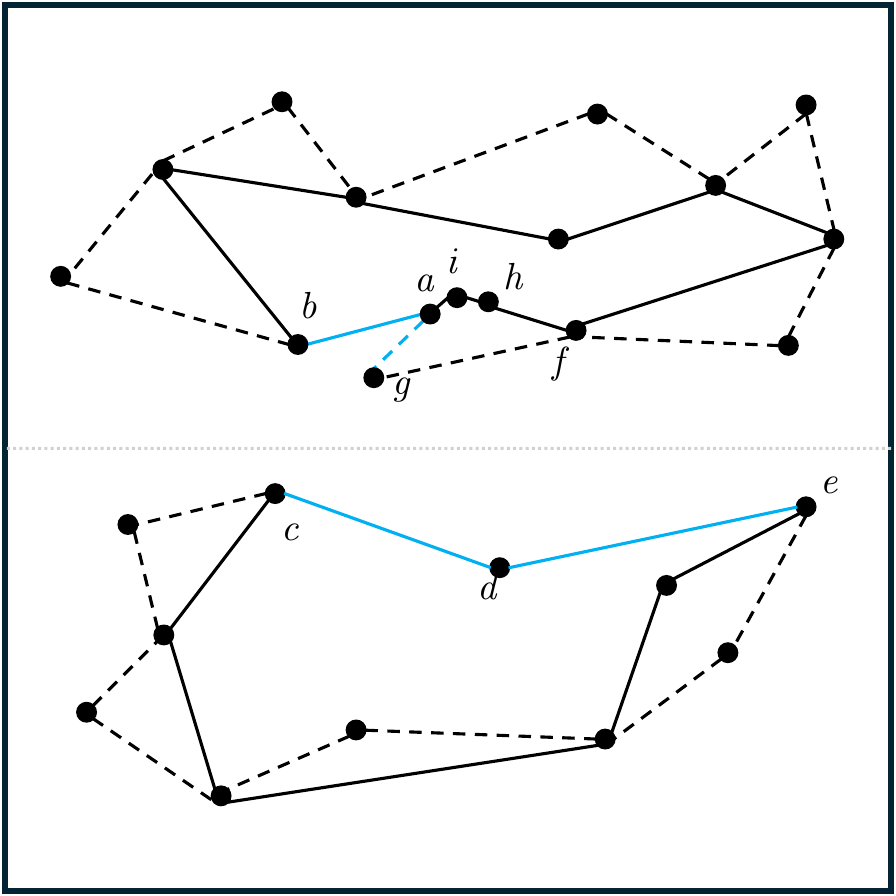}
    \caption{Exposing straight rings (shown in solid blue) from the chosen rings.}
    \label{fig_tspd3}
\end{subfigure}\hfill
\begin{subfigure}[t]{0.48\textwidth}
    \centering
    \includegraphics[width=0.9\linewidth]{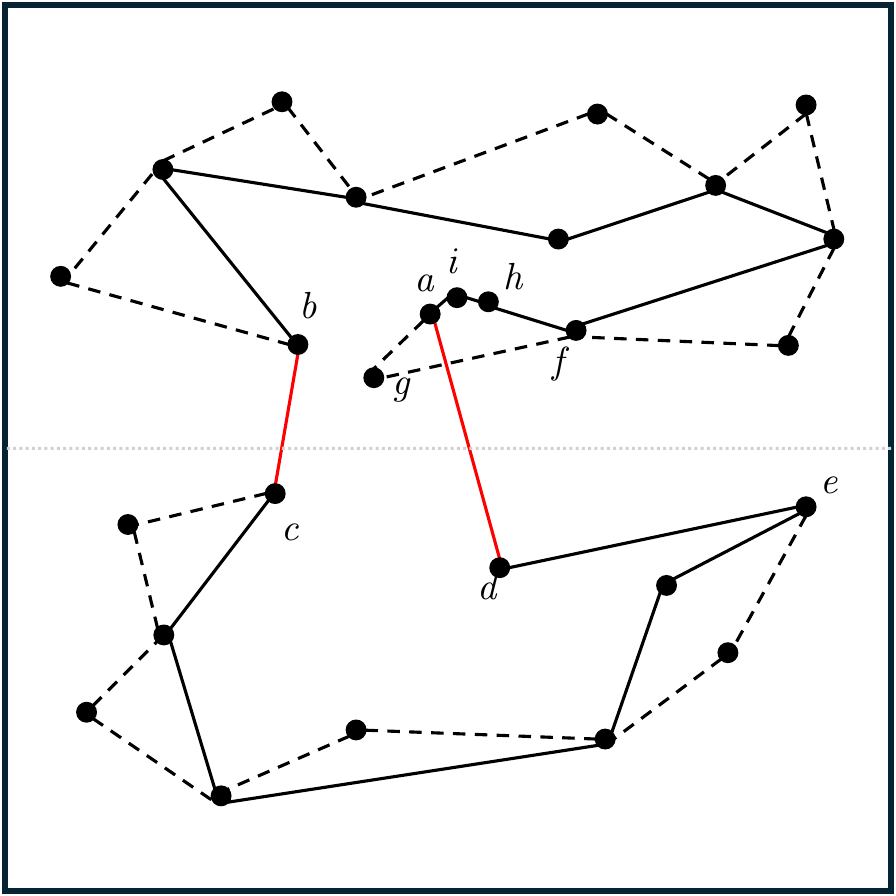}
    \caption{Removing the exposed straight rings and connecting the two local solutions with newly added straight rings (shown in red).}
    \label{fig_tspd4}
\end{subfigure}

\caption{a procedure for combining two local TSPD tours.}
\label{fig_tspd_subadditivity}
\end{figure}

\begin{theorem}\label{thm_tspd_limit}
If $X_1, \ldots, X_n$ are independent and uniformly distributed in $[0, 1]^2$, then there exists a constant $\beta_\TSPD$ such that 
\begin{equation}
	\lim_{n \to \infty} \frac{\TSPD(\{X_1, \ldots, X_n\})}{\sqrt{n}} = \beta_\TSPD,
\end{equation}
almost surely.
\end{theorem}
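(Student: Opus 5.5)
We will apply Theorem~\ref{thm_rhee}, so the task is to verify (A0), (A1), (A2), (A5$'$) and (B1) of Assumption~\ref{assumption_rhee} for $\TSPD(\cdot)$. The first three are immediate. (A0) holds by the convention $\TSPD(\emptyset)=0$. For (A1) and (A2), both $\|\cdot\|_T$ and $\|\cdot\|_D$ are norms, so every ring cost $c(r)$ scales by $\lambda$ under dilation and is unchanged under translation. The set of feasible ring families is defined purely combinatorially and is preserved by these maps, so the minimum transforms the same way. The real work is subadditivity, which we will obtain through its rectangle form (A5$''$), and smoothness (B1).

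The key tool is a \emph{merging lemma}. If $\Xc_1,\Xc_2$ are disjoint finite sets contained in a rectangle $\Qc$, then
\[
\TSPD(\Xc_1\cup\Xc_2)\le \TSPD(\Xc_1)+\TSPD(\Xc_2)+C\diam\Qc,
\]
with $C$ depending only on the norm-equivalence constant between $\|\cdot\|_T$ and $\|\cdot\|_2$. The proof follows the three-step procedure of Figure~\ref{fig_tspd_subadditivity}.

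\emph{Exposing.} In an optimal solution $\Rc_1$, pick any ring $r_1$ with start $a$. Choose a point $b$ adjacent to $a$ on the truck path. We want to split off a straight ring $ab$, which requires making $b$ a combined node. If $r_1$ has no drone node, this is free. Otherwise the drone leaves from $a$, so we redirect it to launch from $b$ instead. By the triangle inequality and $\|\cdot\|_D\le\|\cdot\|_T$, the redirected drone path grows by at most $\|a-b\|_D\le\|a-b\|_T$. Hence the increase $\Delta$ is at most a constant times $\diam\Qc$. Degenerate cases with $|\Xc_i|\le 2$ are handled by the convention, since a straight ring is already present there.

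\emph{Removing and connecting.} Delete the straight rings $ab$ and $cd$ and insert straight rings $ad$ and $bc$. This splices the two cyclic tours into one, and each new edge costs at most $\|\cdot\|_T$-$\diam\Qc$. Applying the lemma with $\Xc_i=\Xc\cap\Qc_i$ gives (A5$''$). Iterating it over the $m=2$ or $m=3$ grid then gives (A5$'$).

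For (B1), the upper direction $\TSPD(\Fc\cup\Gc)\le\TSPD(\Fc)+K\sqrt{|\Gc|}$ uses the merging lemma with $\Xc_1=\Fc$ and $\Xc_2=\Gc$. We bound $\TSPD(\Gc)\le\TSP_T(\Gc)\le c\sqrt{|\Gc|}$ by running the truck alone on an optimal tour and citing the classical worst-case TSP bound in the unit square. The additive $C\diam\Qc$ is a constant, which is at most $C\sqrt{|\Gc|}$ whenever $\Gc\neq\emptyset$.

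The lower direction, $\TSPD(\Fc)\le\TSPD(\Fc\cup\Gc)+K\sqrt{|\Gc|}$, is the main obstacle because it requires deleting points from an optimal solution. The plan is to take an optimal solution on $\Fc\cup\Gc$ and remove points of $\Gc$ one at a time. Removing a truck node or a drone node, or shortcutting a combined node, does not increase cost by the triangle inequality, though removing a combined node merges two rings. A merged ring may then contain two drone nodes, and that must be repaired. We will repair it by demoting one of the drone nodes to a truck node, with the truck detouring to visit it.

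A naive per-point detour costs $O(1)$, which is too expensive. Instead we will collect all drone nodes orphaned in this way, say $k\le|\Gc|$ of them, and serve them with a single truck tour attached at one combined node. That tour costs $O(\sqrt{k})+O(1)$, again by the worst-case TSP bound. The delicate point is checking that attaching this extra truck loop inside one ring keeps the solution feasible. To avoid interfering with that ring's drone, we will attach the loop as separate straight/truck-only rings at a combined node. With both directions of (B1) in hand, Theorem~\ref{thm_rhee} gives the almost sure limit $\beta_\TSPD$.
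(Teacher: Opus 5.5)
Your outline follows the paper's proof.

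\begin{itemize}
\item (A5$''$) is proved by exposing a straight ring in each local optimum, deleting the two exposed rings, and reconnecting with $ad$ and $bc$.
\item The forward half of (B1) splices in a straight-ring-only tour on $\Gc$ of cost $O(\sqrt{|\Gc|})$.
\item The reverse half deletes $\Gc$ together with $O(|\Gc|)$ affected drone nodes without increasing cost, then reinserts those drone nodes using the forward inequality.
\end{itemize}
Your orphaned drone nodes play the role of the paper's set $\Ac$. The paper discards every drone node whose ring touches a removed combined node and breaks the adjacent truck paths into straight rings. You instead keep one drone per merged ring, which gives at most $|\Gc|$ orphans rather than $2|\Gc|$.

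\textbf{The gap: triangle rings in the exposing step.} The step fails as written when the chosen ring is a triangle ring $(a;w;t)$, whose truck path is the single edge $a\to t$. The point adjacent to $a$ on the truck path is then the other combined endpoint $b=t$. Redirecting the drone to launch from $t$ produces the loop $t\to w\to t$. The residual ``ring'' then has equal start and end nodes, which the ring definition forbids (it requires two distinct combined nodes). You cannot avoid this by choosing another ring, because the argument must work for every optimal solution, including one made only of triangle rings. The paper treats this case separately: it replaces $(a;w;t)$ by the straight rings $aw$ and $wt$, at extra cost at most $\|a-w\|_T+\|w-t\|_T\le 2C_T\diam\Qc$. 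Relatedly, when $|\Xc_i|=1$ there is no ring at all, so ``a straight ring is already present'' is false. That case is trivial: insert the point into an exposed straight ring of the other tour.

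\textbf{Two smaller points in the reverse direction of (B1).}

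First, suppose only one of the two rings meeting at a deleted combined node $g$ carries a drone node $w$. That drone must be re-anchored at the far endpoint $e$ of the merged ring. Bounding the extra leg by the other ring's truck length, e.g.\ $\|w-e\|_D\le\|w-g\|_D+\|g-e\|_D\le\|w-g\|_D+\|g-e\|_T$, uses the drone-metric dominance $\|\cdot\|_D\le\|\cdot\|_T$. The triangle inequality alone is not enough.

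Second, do not serve the orphans by a loop that leaves and returns to one combined node. That node would then be shared by more than two rings, which the model does not allow. Instead, apply your own forward inequality to $\Fc\setminus\Oc$ and the orphan set $\Oc$. This splices the orphans in through an exposed straight ring and gives $\TSPD(\Fc)\le\TSPD(\Fc\setminus\Oc)+K\sqrt{|\Oc|}\le\TSPD(\Fc\cup\Gc)+K\sqrt{|\Gc|}$. This is exactly how the paper closes the argument, with $\Ac$ in place of $\Oc$.
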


\begin{proof}

We show that (A0)--(A2), (A5$''$), and (B1) hold for $\TSPD(\cdot)$.

\begin{description}[]
\item[\rm (A0)] This clearly holds.

\item[\rm (A1) \it Positive Homogeneity.] Since norms are homogeneous, $t(\lambda \Tc_r) = \lambda t(\Tc_r)$ and $d(\lambda \Dc_r) = \lambda d(\Dc_r)$.
Then, we can easily show that $c(\lambda \Xc_r) = \lambda c(\Xc_r)$; consequently, $\TSPD(\lambda \Xc) = \lambda \TSPD(\Xc)$.

\item[\rm (A2) \it Translational Invariance.] This clearly holds.

\item[\rm (A5$''$) \it Rectangle Form of Geometric Subadditivity.] 
Let $\Qc$ be a rectangle in $\Rb^2$ that is split into two rectangles with disjoint interiors, $\Qc_1$ and $\Qc_2$, so that $\Qc = \Qc_1 \cup \Qc_2$ and $\Qc_1 \cap \Qc_2 = \emptyset$. 
For any finite point set $\Xc \subset \Rb^2$, define $\Xc_i := \Xc \cap \Qc_i$.
Assign points on the common boundary to either side arbitrarily so that $\Xc = \Xc_1 \cup \Xc_2$ and $\Xc_1 \cap \Xc_2 = \emptyset$. 
Let $\Rc$, $\Rc_1$, and $\Rc_2$ be an optimal ring family for $\Xc$, $\Xc_1$, and $\Xc_2$, respectively.

We first prove the claim in general.
Since all norms on $\Rb^2$ are equivalent, there exist constants $C_T,C_D>0$ such that
\[
	\|p\|_T\leq C_T\|p\|_2,
	\qquad
	\|p\|_D\leq C_D\|p\|_2
	\qquad
	\text{for all }p\in\Rb^2.
\]
Consider an optimal TSPD solution $\Rc_i$ for $\Xc_i$, $i=1,2$.
If one of the two sets has at most two points, the degenerate-case convention bounds its contribution by a constant multiple of $\diam\Qc$, so the desired inequality clearly holds.
We, therefore, assume that both sets contain at least three points.

We first expose one straight ring in each local solution.
Choose any ring in $\Rc_i$.
We claim that this ring can be modified, with an increase of at most $2C_T\diam\Qc$, so that $\Rc_i$ contains a straight ring.
If the chosen ring is already a straight ring, no modification is required.
If the chosen ring is triangular, with combined nodes $s,t$ and drone node $w$, replace it by the two straight rings $sw$ and $wt$.
The increase in cost is bounded by
\[
	\begin{aligned}
	\|s-w\|_T+\|w-t\|_T -\max\{\|s-t\|_T,\|s-w\|_D&+\|w-t\|_D\} \\
	&\leq \|s-w\|_T+\|w-t\|_T \leq 2C_T\diam\Qc .
	\end{aligned}
\]
Finally, suppose that the chosen ring contains truck nodes.
Let $s$ and $t$ be the combined endpoints of the ring, and let $u$ be the truck node adjacent to $s$ along the truck path.
Treat $u$ as a combined node in the modified solution, and replace the original ring by the straight ring $su$ and the residual ring from $u$ to the other combined endpoint.
If the original ring contains a drone node $b$, change the corresponding drone path so that it starts from $u$ rather than from $s$.
Let $P_{u,t}$ be the remaining truck path from $u$ to $t$.
The increase in cost is bounded by
\[
	\begin{aligned}
	\|s-u\|_T+ \max\{t(P_{u,t}), &\|u-b\|_D+\|b-t\|_D\}\\
	&-\max\{\|s-u\|_T+t(P_{u,t}),\|s-b\|_D+\|b-t\|_D\}\\
	&\leq \|s-u\|_T + \max\{\|u-b\|_D-\|s-b\|_D,0\}\\
	&\leq \|s-u\|_T+\|u-s\|_D\\ 
	&\leq 2C_T\diam\Qc .
	\end{aligned}
\]
The last inequality holds due to the drone-metric dominance.
Hence, in all cases, after increasing the cost by at most $2C_T\diam\Qc$, each local solution contains an exposed straight ring.
Let these exposed straight rings have endpoints $\{a,b\}$ in $\Rc_1$ and $\{c,d\}$ in $\Rc_2$.

Remove these two exposed straight rings, and add the two straight rings $ad$ and $bc$.
These two rings concatenate the two paths into a single feasible TSPD solution for $\Xc$.
The cost of the two new connectors is bounded by $2C_T\diam\Qc$.
Therefore, accounting for the two exposures and the concatenation,
\begin{equation}
\label{eq:tspd_geometric_subadditivity}
\TSPD(\Xc)\leq\TSPD(\Xc_1)+\TSPD(\Xc_2)+6C_T\diam\Qc.
\end{equation}
This proves (A5$''$).

\item[\rm (B1) \it Smoothness.]
Let $\Fc,\Gc\subset [0,1]^2$ be finite sets.
Since $\Fc\cup\Gc=\Fc\cup(\Gc\setminus\Fc)$ and $|\Gc\setminus\Fc|\leq|\Gc|$, we may replace $\Gc$ by $\Gc\setminus\Fc$.
Thus, without loss of generality, assume that $\Fc\cap\Gc=\emptyset$, and put $k=|\Gc|$.
The case $k=0$ is trivial, so assume $k\geq1$.

Let $D_0:=\diam([0,1]^2)$.
We use the deterministic bound that any $k$ points in the unit square can be connected by a straight-ring-only TSPD tour of cost at most $C_0\sqrt{k}$, 
where $C_0$ depends only on the truck norm.
Indeed, \citet{few1955bound} showed that any fixed set of $k$ points in the unit square admits a Euclidean path of length $O(\sqrt{k})$.
Closing this path and using $\|p\|_T\leq C_T\|p\|_2$ gives a truck tour of length at most $C_0\sqrt{k}$, which is exactly a straight-ring-only TSPD tour.

We show that $\TSPD(\Fc\cup\Gc) - \TSPD(\Fc)$ is bounded from above by a factor of $O(\sqrt{k})$.
First, take an optimal TSPD solution for $\Fc$.
By the degenerate-case convention, the cases $|\Fc|\leq2$ or $k\leq2$ are absorbed into the constant term.
Thus assume $|\Fc|\geq3$ and $k\geq3$.
By the straight-ring exposure argument used in proving \eqref{eq:tspd_geometric_subadditivity}, after increasing its cost by at most $2C_TD_0$, the solution for $\Fc$ contains an exposed straight ring, say $ab$.

Next, connect the points in $\Gc$ by a straight-ring-only tour of cost at most $C_0\sqrt{k}$.
Choose one straight ring $cd$ in this tour.
Remove the straight rings $ab$ and $cd$, and add the straight rings $ad$ and $bc$.
This produces a feasible TSPD solution for $\Fc\cup\Gc$, and the two added connectors have total cost at most $2C_TD_0$.
Therefore,
\begin{equation}
\label{eq:tspd_smoothness_forward}
\begin{aligned}
	\TSPD(\Fc\cup\Gc)
	&\leq
	\TSPD(\Fc)+2C_TD_0+C_0\sqrt{k}
	\\
	&\quad
	+\|a-d\|_T+\|b-c\|_T-\|a-b\|_T-\|c-d\|_T \\
	&\leq
	\TSPD(\Fc)+C_0\sqrt{k}+4C_TD_0 \\
	&\leq 
	\TSPD(\Fc)
	+\bigl(C_0+4C_TD_0\bigr)\sqrt{k},
\end{aligned}
\end{equation}
since $k \geq 3$.

Now we show that $\TSPD(\Fc\cup\Gc) - \TSPD(\Fc)$ is bounded from below by a factor of $O(\sqrt{k})$.
Let $\Rc$ be an optimal TSPD solution for $\Fc\cup\Gc$.
Let $\Ac\subseteq\Fc$ be the set of points that are served by the drone in $\Rc$ and have at least one combined endpoint in $\Gc$.
Each removed combined node is incident to two adjacent rings, and each ring contains at most one drone node.
Thus, $|\Ac|\leq 2k$.
Remove the points of $\Gc\cup\Ac$ from $\Rc$.
If a removed point is a truck node, we shortcut the truck path; this does not increase the truck cost by the triangle inequality.
If a removed point is a drone node, we delete the corresponding drone edges; this does not increase the ring cost.

It remains to repair the rings incident to the removed combined nodes.
For each such ring, if the drone path has the removed combined node as an endpoint, delete that drone path.
This does not increase the cost of the ring.
Also, replace the remaining truck path, adjacent to the removed combined node, with straight rings between consecutive truck nodes, and reassign these truck nodes as combined nodes.
This replacement does not increase the cost, since the cost of each such straight ring is equal to the corresponding truck-edge cost.

After these local repairs, the remaining tour structure decomposes into maximal connected residual chains, where connectivity is defined through remaining combined nodes.
If no residual chain remains, then $\Fc\setminus\Ac$ is covered by the degenerate-case convention, and the desired inequality is immediate.
For some integer $q\geq 1$, denote these chains in the cyclic order inherited from $\Rc$ by
\[
	\Cc_1,\Cc_2,\ldots,\Cc_q .
\]
In other words, $\Cc_j$ and $\Cc_{j+1}$ are consecutive if no other residual chain lies between them in the cyclic order induced by $\Rc$.
For each $\Cc_j$, let $a_j$ be the terminal combined node adjacent to $\Cc_{j-1}$ in this order, and let $b_j$ be the terminal combined node adjacent to $\Cc_{j+1}$.
To form a feasible TSPD solution on $\Fc\setminus\Ac$ from these chains, add straight rings $b_j a_{j+1}$, with the convention that $a_{q+1}=a_1$.
Suppose a removed combined node $g\in \Gc$ separated $\Cc_j$ and $\Cc_{j+1}$.
Then, the new straight ring $b_j a_{j+1}$ satisfies
\[
	\|b_j-a_{j+1}\|_T
	\leq
	\|b_j-g\|_T+\|g-a_{j+1}\|_T
\]
by the triangle inequality.
Thus, these straight rings cost no more than the truck paths they replace in $\Rc$.
Together with the preceding deletions and local repairs, this gives a feasible TSPD solution on $\Fc\setminus\Ac$ whose total cost is no larger than the cost of $\Rc$.
Therefore,
\begin{equation}
\label{eq:tspd_smoothness_deletion}
	\TSPD(\Fc\setminus\Ac)
	\leq
	\TSPD(\Fc\cup\Gc).
\end{equation}
Viewing $\Fc$ as $(\Fc\setminus\Ac)\cup\Ac$, \eqref{eq:tspd_smoothness_forward} gives
\[
	\TSPD(\Fc)
	\leq
	\TSPD(\Fc\setminus\Ac)
	+
	\bigl(C_0+4C_TD_0\bigr)\sqrt{|\Ac|}.
\]
Then, by \eqref{eq:tspd_smoothness_deletion} and $|\Ac|\leq 2k$, it follows that
\[
	\TSPD(\Fc)
	\leq
	\TSPD(\Fc\cup\Gc)
	+
	\sqrt{2}\bigl(C_0+4C_TD_0\bigr)\sqrt{k}.
\]
This proves (B1).

\end{description}

With (A0)--(A2), (A5$''$), and (B1) holding, we have a proof of the theorem.
\end{proof}

The convergence also holds in expectation.
Indeed, the deterministic bound used for (B1) gives, for some constant $B > 0$,
\[
	0\leq \frac{\TSPD(\{X_1,\ldots,X_n\})}{\sqrt n}\leq B.
\]
Therefore, bounded convergence yields
\begin{equation}
	\lim_{n\to\infty}
	\frac{\Eb[\TSPD(\{X_1,\ldots,X_n\})]}{\sqrt n}
	=\beta_\TSPD.
\end{equation}

\subsection{The Speed-Scaled Euclidean Model}
\label{sec_special_case}

If both truck and drone costs are based on the Euclidean norm, $\|\cdot\|_2$, 
and the drone operates at relative speed $\alpha \geq 1$, then
\[
\|x_i - x_j\|_T = \|x_i - x_j\|_2,
\qquad
\|x_i - x_j\|_D = \frac{1}{\alpha} \|x_i - x_j\|_2.
\]
This model is commonly used in the literature \citep{agatz2018optimization,bogyrbayeva2023deep,mahmoudinazlou2024hybrid}.

\subsection{The Rectilinear-Euclidean Mixed Model} 
\label{sec_mixed_model}

Several TSPD studies \citep{poikonen2019branch,de2020variable} have considered the Rectilinear-Euclidean mixed norm case.
That is, the truck's distance is measured by $\ell_1$-norm, while the drone's distance is measured by the speed-scaled $\ell_2$-norm.
In particular,
\[
\|x_i - x_j\|_T = \|x_i - x_j\|_1,
\qquad
\|x_i - x_j\|_D = \frac{1}{\alpha} \|x_i - x_j\|_2,
\]
for the drone speed parameter $\alpha \geq 1$.

For the result of this paper, we consider this speed-scaled Euclidean model to derive bounds, while we make some comments on the Rectilinear-Euclidean mixed model in Section~\ref{sec_mixed}.

\section{Upper Bounds}

\label{sec_upper_bounds}

We consider the speed-scaled Euclidean distance model in Section~\ref{sec_special_case}.
To derive an explicit upper bound for $\beta_{\TSPD}$, we adapt the classical strip-partition method of \citet{beardwood1959shortest} to the TSPD setting. 
The key idea is to construct a feasible tour by tiling the plane with narrow strips and organizing customers into \emph{rings} of simple structure.

In the formation of a feasible TSPD route, we consider three primitive types: straight, triangle, and quartet rings. 
A \emph{straight ring} consists of two combined nodes connected directly by the truck while the drone is on board; this is called a 2-point ring.
A \emph{triangle ring} consists of two combined nodes with one drone node in between; this is a 3-point ring.
A \emph{quartet ring} consists of two combined nodes, one drone node, and one truck node; this is a 4-point ring.
Similarly, a \emph{quintet ring} consists of two combined nodes, one drone node, and two truck nodes; a 5-point ring.
We will first form a feasible routing solution within each strip, only using homogeneous ring types; for example, a solution with straight rings only, triangle rings only, or quartet rings only.

Consider the set of arbitrary consecutive $k$ points, $X_0, X_1, \ldots X_{k-1}$. 
We introduce unscaled random variables:
\[
Z_1, Z_2,\ldots, Z_{k-1} \ \stackrel{\text{i.i.d.}}{\sim} \mathrm{Exponential}(1),
\qquad
U_0, U_1, \ldots, U_{k-1} \ \stackrel{\text{i.i.d.}}{\sim} \mathrm{Uniform}[0,1],
\]
For each strip with height $h/\sqrt{n}$, 
we denote the unscaled cost of a $k$-point ring by $C_k(h, \alpha)$ for given $h$ and the drone speed factor $\alpha$.
For example, the triangle ring is a 3-point ring with $k=3$.
For brevity, we will also use $C_k$ to denote $C_k(h, \alpha)$ whenever the meaning is clear.

\begin{figure}
\centering
\begin{subfigure}[t]{\textwidth}
    \centering
    \includegraphics[width=0.6\linewidth]{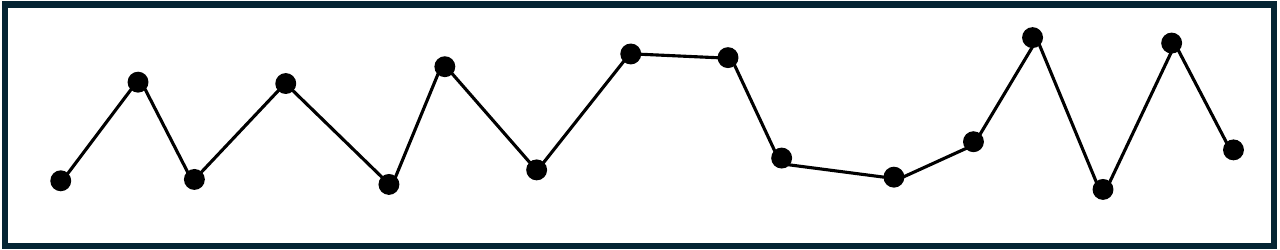}
    \caption{Straight rings only}
    \label{fig_straight}
\end{subfigure}

\vspace{1em} %

\begin{subfigure}[t]{\textwidth}
    \centering
    \includegraphics[width=0.6\linewidth]{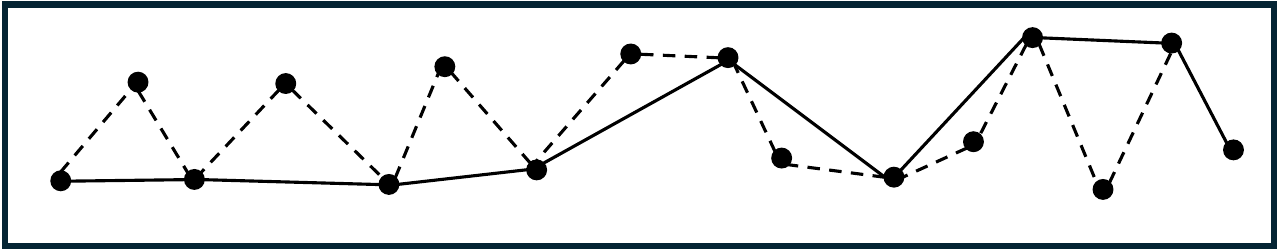}
    \caption{Triangle rings only, with one ``leftover'' in the far right}
    \label{fig_triangle}
\end{subfigure}

\vspace{1em} %

\begin{subfigure}[t]{\textwidth}
    \centering
    \includegraphics[width=0.6\linewidth]{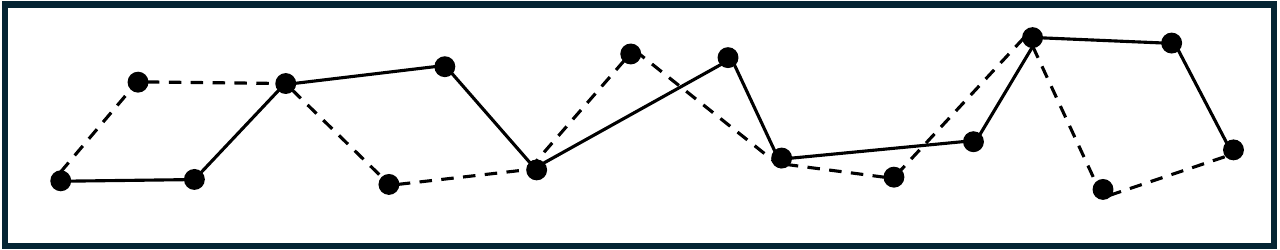}
    \caption{Quartet rings only}
    \label{fig_quartet}
\end{subfigure}
\caption{Three types of rings.}
\label{fig_homogeneous_rings}
\end{figure}

\subsection{Straight Rings Only}\label{sec_straight}
Consider a straight ring, where the truck and the drone travel together at the speed of the truck.
As seen in the TSP case, the unscaled distance between $X_0$ and $X_1$ can be written as:
\[
	L_{01} = \sqrt{ Z_1^2 + h^4 (U_0 - U_1)^2 }.
\]
This becomes the cost of the straight ring directly:
\[
	C_2 = L_{01}.
\]
If we construct a TSPD path using straight rings only, as shown in Figure~\ref{fig_straight}, we can conclude that the limit is bounded by
\[
\beta_\TSPD \leq 
\frac{1}{h} \Eb[C_2] = \frac{1}{h} \int_0^\infty \int_0^1 \int_0^1 e^{-z_1} \sqrt{z_1^2 + h^4(u_0-u_1)^2} \du_0 \du_1 \dz_1.
\]
Since $\Eb[Z_1^2]=2$ and $\Eb[(U_0-U_1)^2]=1/6$, Jensen's inequality gives
\[
	\Eb[C_2]\leq \sqrt{2+\frac{h^4}{6}}.
\]
Optimizing over $h$ immediately yields the following analytical bound:
\begin{equation}\label{eq_straight_analytical_bound}
	\beta_\TSPD
	\leq \inf_{h>0}\frac{1}{h}\sqrt{2+\frac{h^4}{6}}
	= \sqrt{\frac{2}{\sqrt{3}}}
	\approx 1.0746,
\end{equation}
where the infimum is attained at $h=12^{1/4}$.

\subsection{Triangle Rings Only} \label{sec_triangle}
We consider $\{ X_0, X_1, X_2 \}$ and the unscaled distances:
\begin{align*}
	L_{01} &= \sqrt{ Z_1^2 + h^4 (U_0 - U_1)^2 }, \\
	L_{12} &= \sqrt{ Z_2^2 + h^4 (U_1 - U_2)^2 }, \\
	L_{02} &= \sqrt{ (Z_1 + Z_2)^2 + h^4 (U_0 - U_2)^2 }. 
\end{align*}
Forming triangle rings, we only consider the mid-point, $X_1$, as the drone node. 
This is to ensure the connectivity and the independence between consecutive triangle rings. 
The cost of the triangle ring is
\[
	C_3 = \max\Big\{ L_{02}, \frac{1}{\alpha} (L_{01} + L_{12}) \Big\}
\]
Note that each triangle ring consumes 2 additional points.
Depending on the number of points within each strip, it is possible that we have a ``leftover'' node that cannot be part of a triangle ring, as shown in Figure~\ref{fig_triangle}.
We need to connect leftover nodes by a straight ring.
However, such parts have a total length of $o(\sqrt{n})$, hence they do not impact $\beta_\TSPD$.
Using triangle-ring-only TSPD paths, we conclude that the limit is bounded by 
\begin{multline}
\beta_\TSPD \leq 
	\frac{1}{2h} \Eb[C_3]\\
	 = 
	\frac{1}{2h} \int_0^\infty \int_0^\infty \int_{[0,1]^3}
e^{-z_1-z_2} 
\max\bigg\{
\sqrt{(z_1+z_2)^2 + h^4(u_0-u_2)^2}, \\
\frac{1}{\alpha}\bigg( \sqrt{z_1^2 + h^4(u_0-u_1)^2} + \sqrt{z_2^2 + h^4(u_1-u_2)^2}\bigg)
\bigg\}
\du \dz_1 \dz_2,
\end{multline}
where we used $u = (u_0, u_1, u_2)$ as a three-dimensional vector.

Since $\max\{a,b\}\leq\sqrt{a^2+b^2}$ and $(a+b)^2\leq 2(a^2+b^2)$ for any $a,b\geq 0$, Jensen's inequality gives
\[
	\Eb[C_3]
	\leq
	\sqrt{\Eb[L_{02}^2]+\frac{2}{\alpha^2}\Eb[L_{01}^2+L_{12}^2]}
	=
	\sqrt{6+\frac{h^4}{6}+\frac{4}{\alpha^2}\left(2+\frac{h^4}{6}\right)},
\]
where $\Eb[L_{02}^2]=6+h^4/6$ and $\Eb[L_{01}^2]=\Eb[L_{12}^2]=2+h^4/6$.
Optimizing over $h$ immediately yields the following analytical bound:
\begin{equation}\label{eq_triangle_analytical_bound}
	\beta_\TSPD
	\leq
	\frac{1}{\sqrt{2}}
	\left[
		\left(1+\frac{4}{3\alpha^2}\right)
		\left(1+\frac{4}{\alpha^2}\right)
	\right]^{1/4},
\end{equation}
where the infimum is attained at $h=\left[12(3\alpha^2+4)/(\alpha^2+4)\right]^{1/4}$.

\subsection{Quartet Rings Only}\label{sec_quartet}

To extend our discussion to quartet rings and other more complicated cases, we define
\begin{align}
	W_i &= \sum_{j=0}^i Z_j, \quad i = 0, 1, 2, \ldots \label{eq_general00} \\
	W_0 &= Z_0 = 0, \\
	L_{ij} &= \sqrt{
		(W_j - W_i)^2 + h^4(U_i - U_j)^2
	}, \quad i, j = 0, 1, 2, \ldots \\
	L_{ij} &= L_{ji}. \label{eq_general99}
\end{align}
We consider a quartet ring consists of $\{ X_0, X_1, X_2, X_3\}$ and the unscaled distances of $L_{01}$, $L_{12}$, $L_{23}$, $L_{02}$, and $L_{13}$.
While $X_0$ and $X_3$ are fixed as endpoints, we have options to choose either $X_1$ or $X_2$ as a drone node.
In the decision making, we, of course, want to make $C_4$ as small as possible for a tighter upper bound.
That is,
\begin{equation*}
	C_4 =
	\min\bigg\{
	\max\Big\{ L_{01} + L_{13}, \frac{1}{\alpha} (L_{02} + L_{23}) \Big\},
	\max\Big\{ L_{02} + L_{23}, \frac{1}{\alpha} (L_{01} + L_{13}) \Big\}
	\bigg\}.
\end{equation*}

The calculation of $C_4$ can be slightly simplified.

\begin{lemma} \label{lem_c4}
We can show that
\begin{equation}
	C_4 = \max \Big\{ T_{\min}, \frac{1}{\alpha} T_{\max} \Big\}
\end{equation}
where $T_{\min} = \min\{L_{01}+L_{13}, L_{02} + L_{23}\}$ and $T_{\max} = \max\{L_{01}+L_{13}, L_{02} + L_{23}\}$.
\end{lemma}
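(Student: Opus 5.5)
Write $A = L_{01}+L_{13}$ and $B = L_{02}+L_{23}$, so that by definition
\[
C_4=\min\Big\{\max\{A,B/\alpha\},\ \max\{B,A/\alpha\}\Big\}.
\]
The statement is purely deterministic and symmetric in $A$ and $B$. The plan is therefore to fix a realization and assume, without loss of generality, that $A\le B$; the other case follows by swapping the roles of $X_1$ and $X_2$. Under this assumption $T_{\min}=A$ and $T_{\max}=B$, and the claim reduces to
\[
\min\big\{\max\{A,B/\alpha\},\ \max\{B,A/\alpha\}\big\}=\max\{A,B/\alpha\}.
\]

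It suffices to show that the first option never exceeds the second, i.e.
\[
\max\{A,B/\alpha\}\le\max\{B,A/\alpha\}.
\]
Here the only facts used are $\alpha\ge1$ and $A,B\ge 0$. They give $A\le B$ and $B/\alpha\le B$, so both terms of the left-hand maximum are bounded by $B$. Since $B\le\max\{B,A/\alpha\}$, the inequality follows. Consequently the minimum equals $\max\{A,B/\alpha\}=\max\{T_{\min},T_{\max}/\alpha\}$, as claimed.

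The interpretation is that the truck should take the cheaper of the two truck paths, and the drone the costlier route. No step is a genuine obstacle. The only points needing care are to state the symmetric reduction explicitly and to note where $\alpha\ge 1$ is used, namely in $B/\alpha\le B$. Ties $A=B$ are covered automatically by the same argument.
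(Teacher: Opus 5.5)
Your proof is correct and takes the same approach as the paper, which also compares the two choices of drone node ($X_1$ or $X_2$), takes the minimum, and then simply asserts the resulting identity. Your argument supplies the step the paper leaves implicit: when $A\le B$, you have $\max\{A,B/\alpha\}\le B\le\max\{B,A/\alpha\}$, and you correctly identify $\alpha\ge 1$ and $B\ge 0$ as what makes $B/\alpha\le B$ hold.
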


\begin{proof}
Among $\{X_0, X_1, X_2, X_3\}$, $X_0$ and $X_3$ are fixed as the combined nodes.
There are two possible cases: $X_1$ is the drone node, or $X_2$.
Comparing these two cases, we take the minimum.
We can show that
\begin{align*}
	C_4 
	&= \min \Bigg\{
		\max \Big\{ L_{01} + L_{13}, \frac{1}{\alpha} (L_{02} + L_{23}) \Big\}, \ 
		\max \Big\{ L_{02} + L_{23}, \frac{1}{\alpha} (L_{01} + L_{13}) \Big\}
	\Bigg\} \\
	&= \max \Big\{ \min\{L_{01}+L_{13}, L_{02} + L_{23}\}, \ \frac{1}{\alpha}  \max\{L_{01}+L_{13}, L_{02} + L_{23} \} \Big\}.
\end{align*}
This completes the proof.
\end{proof}

Since we can determine $T_{\min}$ and $T_{\max}$ from the same comparison operation, Lemma~\ref{lem_c4} will help reduce computational effort slightly.

After forming a series of quartet rings within a strip, we may have one or two leftover points at the end, which we connect by straight rings.
Again, such straight rings have a total length of $o(\sqrt{n})$ and therefore do not affect $\beta_\TSPD$.
Since each quartet ring, a 4-point ring, consumes $3$ additional points, by using only quartet rings we can bound the limit as
\[
\beta_\TSPD \leq \frac{1}{3h} \Eb[C_4]
	= \frac{1}{3h} \int_{0}^{\infty} \int_{0}^{\infty} \int_{0}^{\infty} \int_{[0,1]^4} e^{-(z_1+z_2+z_3)}
		\max\{T_{\min}, \tfrac{1}{\alpha}T_{\max} \} 
	\du \dz_1 \dz_2 \dz_3,
\]
where we used $u = (u_0, u_1, u_2, u_3)$ as a vector.

Let $S_1=L_{01}+L_{13}$ and $S_2=L_{02}+L_{23}$. Since $\max\{a,b\}^2\leq a^2+b^2$ for any $a,b\geq 0$,
\[
	C_4^2
	\leq T_{\min}^2+\frac{1}{\alpha^2}T_{\max}^2
	\leq \left(\frac{1}{2}+\frac{1}{\alpha^2}\right)(S_1^2+S_2^2).
\]
Thus, Jensen's inequality gives
\[
	\Eb[C_4]
	\leq
	\sqrt{\left(\frac{1}{2}+\frac{1}{\alpha^2}\right)\Eb[S_1^2+S_2^2]}.
\]
Applying $(a+b)^2\leq 2(a^2+b^2)$ to $S_1$ and $S_2$ gives
\[
	\Eb[S_1^2+S_2^2]\leq 32+\frac{4h^4}{3}.
\]
Therefore, optimizing over $h$ immediately yields the following analytical bound:
\begin{equation}\label{eq_quartet_analytical_bound}
	\beta_\TSPD
	\leq
	\frac{1}{3}
	\sqrt{\frac{1}{2}+\frac{1}{\alpha^2}}
	\sqrt{2\sqrt{\frac{128}{3}}},
\end{equation}
where the infimum is attained at $h=24^{1/4}$.

\subsection{General Cases of Homogeneous Ring Types}
So far, we have considered 2-point rings (straight), 3-point rings (triangle), and 4-point rings (quartet).
We can continue to consider 5-point rings, 6-point rings, etc.
In general, with $C_k$ for an integer $k \geq 2$, we can write
\begin{equation}
	\beta_{\TSPD}(\alpha) \le \inf_{h>0} \frac{1}{(k-1)h} \Eb[C_k(h,\alpha)]
\end{equation}
for any given $\alpha \geq 1$.
We can also write
\[
	\beta_{\TSPD}(\alpha) \le \min_{k \geq 2} \inf_{h>0} \frac{1}{(k-1)h} \Eb[C_k(h,\alpha)].
\]	
However, considering many $k \geq 2$ values will be impractical for evaluating the bound, if not impossible.
We limit our choice of $k$ up to 2, 3, and 4, for homogeneous-ring formations as in Sections \ref{sec_straight} to \ref{sec_quartet}.

\subsection{Alternating among $k$-point Ring Patterns}

Instead of sticking to a single type of ring, we can also alternate among different types of rings.
When $k=2$ and $k=3$, the only possibilities are a straight ring and a triangle ring, respectively, that are determined uniquely.
We consider when $k \geq 4$.

Before we discuss alternating patterns, we make an important observation.
It is known that there exists an optimal TSPD solution without two consecutive straight rings \citep{lee2025iterative}. 
We strengthen this result.

\begin{figure}
\centering
\includegraphics[width=\textwidth]{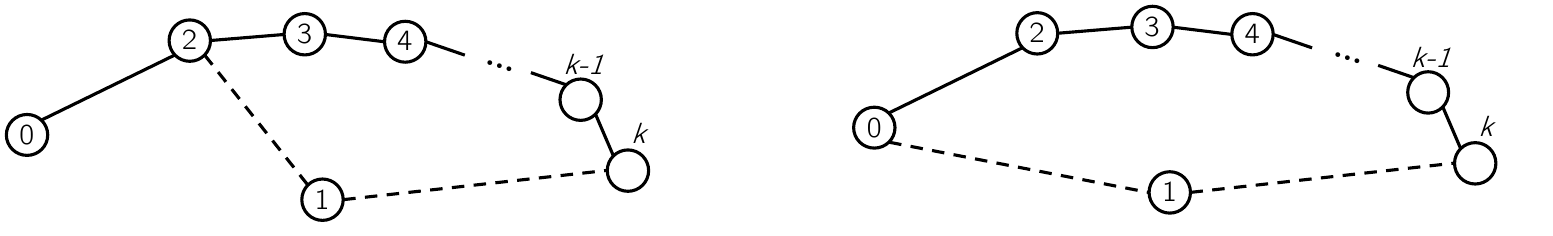}
\caption{We can merge a straight ring to a neighboring ring to create a solution no worse than the current one.}
\label{fig_no_straight_ring}
\end{figure}

\begin{theorem} \label{thm_no_straight_ring}
Suppose the drone metric is no larger than the truck metric.
For any given TSPD, there exists an optimal solution without any straight rings.
\end{theorem}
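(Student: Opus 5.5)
The plan is a merging argument: take an optimal solution that, among all optimal solutions, has the fewest straight rings, and show that any straight ring can be absorbed into a neighbouring ring without raising the cost. That gives a contradiction unless there are no straight rings. Let $ab$ be a straight ring, so its cost is $\|a-b\|_T$. Let $r$ be the adjacent ring sharing the combined node $b$, with other endpoint $c$; Figure~\ref{fig_no_straight_ring} illustrates the situation. The idea is to demote $b$ from a combined node to a truck node of a new ring $r'$ from $a$ to $c$. In $r'$ the truck path is $a\to b$ followed by $\Tc_r$. If $r$ has a drone node $w$ with drone path $b\to w\to c$, the new drone path becomes $a\to w\to c$.

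The key step is the cost comparison $c(r')\le \|a-b\|_T + c(r)$. The truck part of $r'$ has length exactly $\|a-b\|_T+t(\Tc_r)$. If $r$ has no drone node, then $c(r')=\|a-b\|_T+t(\Tc_r)$ and we are done. Otherwise, the triangle inequality and drone-metric dominance give
\[
\|a-w\|_D+\|w-c\|_D\le \|a-b\|_D+\|b-w\|_D+\|w-c\|_D\le \|a-b\|_T+d(\Dc_r).
\]
Combining this with the truck part,
\[
c(r')=\max\{\|a-b\|_T+t(\Tc_r),\,\|a-w\|_D+\|w-c\|_D\}\le \|a-b\|_T+\max\{t(\Tc_r),d(\Dc_r)\}.
\]
Hence $r'$ has at most one drone node and is a valid ring, and the total cost does not increase. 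The point $b$ is still served exactly once (now by the truck), and $a$ and $c$ remain combined nodes, so the concatenated cyclic tour stays feasible.

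I expect the main obstacle to be the degenerate configurations rather than the inequality.

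\textbf{Case 1: every ring is straight.} This covers a tour of only straight rings, and also the case where the neighbour $r$ is itself straight. Here we instead merge two consecutive straight rings $ab$ and $bc$ into a quartet-like ring. Take the truck path $a\to b\to c$, which is unchanged. The drone cannot be idle, since each ring needs at most one drone node, and zero drone nodes is allowed. So $a\to b\to c$ with $b$ as a truck node and no drone node is already a ring, with cost $\|a-b\|_T+\|b-c\|_T$. This is the same argument as above without the drone part.

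\textbf{Case 2: the whole tour is a single ring cycle with few points.} For example, $|\Xc|=2$ under the stated convention, or the neighbour of $ab$ is the ring $ba$ itself. These cases must be excluded or handled by the degenerate-case convention. For $|\Xc|\ge 3$, one merge step still leaves two distinct combined endpoints, provided $a\neq c$. If $a=c$, the tour has only two combined nodes; we then merge on the other side instead, or observe that $r$ contains another point which can be promoted to a combined node.

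Each merge strictly decreases the number of straight rings without increasing cost. Since there are finitely many solutions, this contradicts the minimality assumption, and the theorem follows.
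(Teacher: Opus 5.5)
Your central merge step is the paper's proof. The paper likewise absorbs the straight ring into the adjacent ring, demotes the shared combined node to a truck node, and relaunches the drone from the far endpoint of the straight ring. It then applies the drone-metric triangle inequality followed by drone-metric dominance, exactly as in your display. The paper treats only a neighbour that contains a drone node and never discusses degenerate configurations. Your Case~1 (straight or truck-only neighbours) is fine for the literal two-point definition of a straight ring, provided $a\neq c$. If you want the stronger reading advertised in the paper's introduction, that truck and drone never travel together, merge $ab$ and $bc$ into the triangle ring $a\to c$ with drone node $b$ instead; its cost $\max\{\|a-c\|_T,\|a-b\|_D+\|b-c\|_D\}$ is at most $\|a-b\|_T+\|b-c\|_T$. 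The genuine gap is Case~2, and it cannot be closed.

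When $a=c$, the tour consists of the straight ring $ab$ and a single ring $r$ from $b$ back to $a$, so both neighbours of $ab$ are $r$. Merging at $a$ instead yields a ring from $b$ to $b$, and promoting an interior point of $r$ to a combined node comes with no cost bound. In fact the theorem fails here.

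\emph{Three customers.} With $k\ge 2$ combined nodes there are $k$ rings but only $3-k\le 1$ interior points, so every feasible solution contains a straight ring. The paper's own three-point nonmonotonicity example uses one.

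\emph{Four customers.} Take $\alpha=1$ and $a=(0,0)$, $b=(0,\epsilon)$, $t=(1,0)$, $w=(-1,0)$. Use the straight ring $ab$, followed by the ring from $b$ to $a$ with truck path $b\to t\to a$ and drone path $b\to w\to a$. This costs $\epsilon+1+\sqrt{1+\epsilon^2}$, close to $2$. A solution without straight rings needs $4-k\ge k$, hence exactly two combined nodes and one interior point per ring. When $\alpha=1$, a ring from $x$ to $y$ with single interior point $p$ costs $\|x-p\|_2+\|p-y\|_2$, whether $p$ is a truck node or a drone node. So the makespan equals the length of a Hamiltonian cycle on the four points, which exceeds $4$. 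Replacing $t$ by a tight cluster of truck nodes gives counterexamples for any larger number of customers.

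What your argument, like the paper's, actually proves is a local lemma: a straight ring whose neighbouring ring ends at a node other than the straight ring's far endpoint can be merged at no extra cost. Iterating it shows only that some optimal solution either has no straight ring or has exactly two combined nodes. That local lemma is all the paper uses in the four- and five-point pattern reductions. The global statement, however, needs this exception written in.
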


\begin{proof}
Suppose there exists a straight ring as in the left figure of Figure~\ref{fig_no_straight_ring}, which is next to a $k$-point ring, wherein node 1 is the drone node.
Let $\ell^T_{ij} = \|x_i-x_j\|_T$ and $\ell^D_{ij} = \|x_i-x_j\|_D$.
We show that 
\begin{multline*}
	\ell^T_{02} + \max\Big\{ \ell^T_{23} + \cdots + \ell^T_{k-1, k}, \ell^D_{21} + \ell^D_{1k} \Big\}
	   \geq 
	\max\Big\{ \ell^T_{02} + \ell^T_{23} + \cdots + \ell^T_{k-1, k}, \ell^D_{01} + \ell^D_{1k} \Big\}
\end{multline*}
where the left-hand side represents the left figure and the right-hand side represents the right figure in Figure~\ref{fig_no_straight_ring}.
From the triangle 012, we have $\ell^D_{01} \leq \ell^D_{02} + \ell^D_{21}$.
By the metric dominance assumption, we obtain
\[
	\ell^D_{01} + \ell^D_{1k}
	\leq
	\ell^D_{02} + \ell^D_{21} + \ell^D_{1k}
	\leq
	\ell^D_{21} + \ell^D_{1k} + \ell^T_{02}.
\]
Hence, we obtain
\begin{align*}
	& \ell^T_{02} + \max\Big\{ \ell^T_{23} + \cdots + \ell^T_{k-1, k}, \ell^D_{21} + \ell^D_{1k} \Big\} \\
	& = \max\Big\{ \ell^T_{02} + \ell^T_{23} + \cdots + \ell^T_{k-1, k}, \ell^T_{02} + \ell^D_{21} + \ell^D_{1k} \Big\} \\
	& \geq 	\max\Big\{ \ell^T_{02} + \ell^T_{23} + \cdots + \ell^T_{k-1, k}, \ell^D_{01} + \ell^D_{1k} \Big\},
\end{align*}
which shows that you can merge a straight ring to a neighboring ring without increasing the total cost.
\end{proof}

This theorem will help us simplify the calculations.

\subsubsection{Four-point Ring Patterns} 

We assume that we consider a block of 4 points each time, and determine the best choice of ring composition.
There are three cases:
(i) Triangle-Straight,
(ii) Straight-Triangle, and
(iii) Quartet.
Theorem~\ref{thm_no_straight_ring}, however, tells that Triangle-Straight and Straight-Triangle patterns are no better than the Quartet patterns.
That is, by merging the straight ring with the neighboring triangle ring, we obtain a quartet ring without increasing the objective function value.
Therefore, for four-point ring patterns, we only need to consider quartet rings as done in Section~\ref{sec_quartet}.

\subsubsection{Five-point Ring Patterns}\label{sec_five_point_patterns}

\begin{figure}
\begin{subfigure}[t]{0.48\textwidth}
    \centering
    \includegraphics[width=0.8\linewidth]{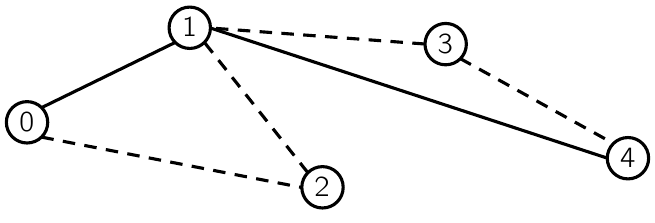}
    \caption{$\text{Triangle}(0, 1; 2) + \text{Triangle}(1, 4; 3)$}
\end{subfigure}\hfill
\begin{subfigure}[t]{0.48\textwidth}
    \centering
    \includegraphics[width=0.8\linewidth]{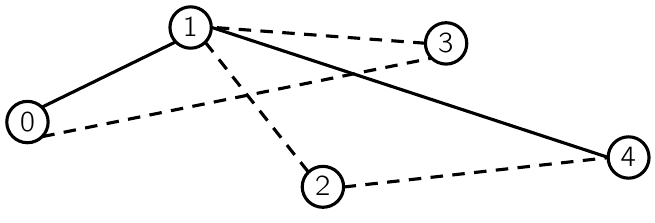}
    \caption{$\text{Triangle}(0, 1; 3) + \text{Triangle}(1, 4; 2)$}
\end{subfigure}

\vspace{1em} %

\begin{subfigure}[t]{0.48\textwidth}
    \centering
    \includegraphics[width=0.8\linewidth]{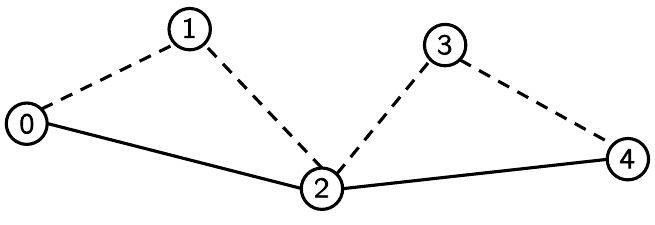}
    \caption{$\text{Triangle}(0, 2; 1) + \text{Triangle}(2, 4; 3)$}
\end{subfigure}\hfill
\begin{subfigure}[t]{0.48\textwidth}
    \centering
    \includegraphics[width=0.8\linewidth]{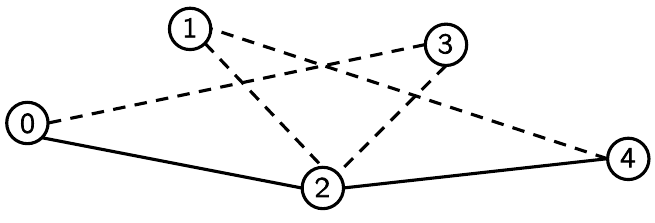}
    \caption{$\text{Triangle}(0, 2; 3) + \text{Triangle}(2, 4; 1)$}
\end{subfigure}

\vspace{1em} %

\begin{subfigure}[t]{0.48\textwidth}
    \centering
    \includegraphics[width=0.8\linewidth]{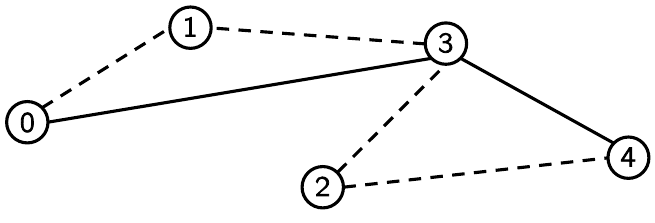}
    \caption{$\text{Triangle}(0, 3; 1) + \text{Triangle}(3, 4; 2)$}
\end{subfigure}\hfill
\begin{subfigure}[t]{0.48\textwidth}
    \centering
    \includegraphics[width=0.8\linewidth]{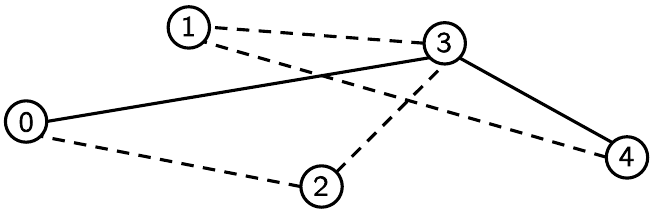}
    \caption{$\text{Triangle}(0, 3; 2) + \text{Triangle}(3, 4; 1)$}
\end{subfigure}

\caption{
Triangle-Triangle patterns with five points. 
Note that only the ordering of the horizontal coordinates is important in this figure;
therefore, the shape shown in the figure may not represent the actual distance.
}
\label{fig_tt}
\end{figure}

\begin{figure}
\begin{subfigure}[t]{0.48\textwidth}
    \centering
    \includegraphics[width=0.8\linewidth]{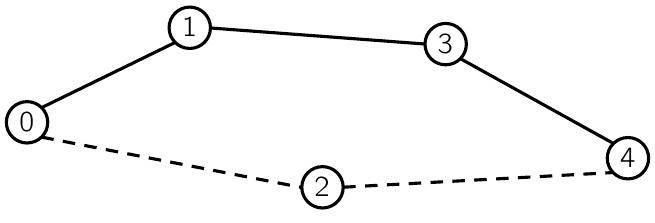}
    \caption{$\text{Quintet}(0, 1, 3, 4; 2)$}
\end{subfigure}\hfill
\begin{subfigure}[t]{0.48\textwidth}
    \centering
    \includegraphics[width=0.8\linewidth]{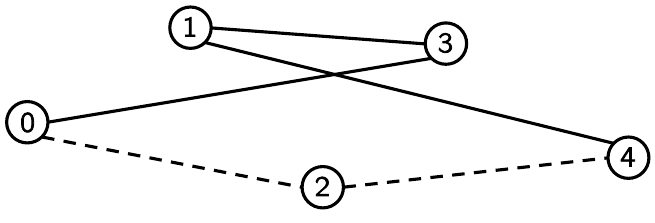}
    \caption{$\text{Quintet}(0, 3, 1, 4; 2)$}
\end{subfigure}

\vspace{1em} %

\begin{subfigure}[t]{0.48\textwidth}
    \centering
    \includegraphics[width=0.8\linewidth]{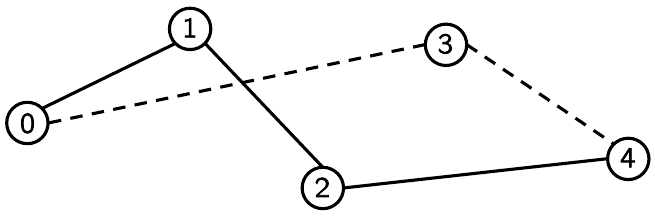}
    \caption{$\text{Quintet}(0, 1, 2, 4; 3)$}
\end{subfigure}\hfill
\begin{subfigure}[t]{0.48\textwidth}
    \centering
    \includegraphics[width=0.8\linewidth]{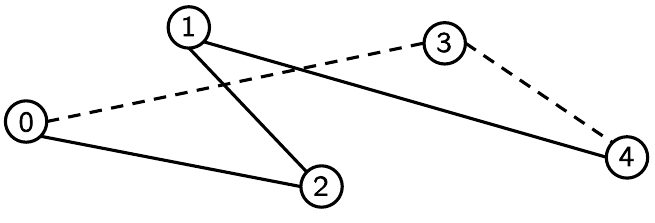}
    \caption{$\text{Quintet}(0, 2, 1, 4; 3)$}
\end{subfigure}

\vspace{1em} %

\begin{subfigure}[t]{0.48\textwidth}
    \centering
    \includegraphics[width=0.8\linewidth]{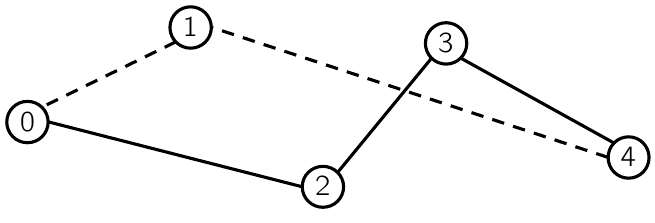}
    \caption{$\text{Quintet}(0, 2, 3, 4; 1)$}
\end{subfigure}\hfill
\begin{subfigure}[t]{0.48\textwidth}
    \centering
    \includegraphics[width=0.8\linewidth]{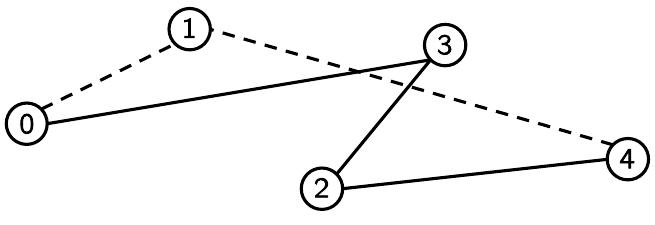}
    \caption{$\text{Quintet}(0, 3, 2, 4; 1)$}
\end{subfigure}

\caption{
Quintet patterns. 
Note that only the ordering of the horizontal coordinates is important in this figure;
therefore, the shape shown in the figure may not represent the actual distance.
}
\label{fig_qq}

\end{figure}

We now consider a block of 5 points each time.
We consider the following five cases:
(i) Triangle-Triangle,
(ii) Quartet-Straight,
(iii) Straight-Quartet,
(iv) Straight-Triangle-Straight, and
(v) Quintets.
Again, by Theorem~\ref{thm_no_straight_ring}, we can merge all straight rings in cases (ii), (iii), and (iv), to obtain a quintet ring.
Therefore, we only consider the Triangle-Triangle patterns, presented in Figure~\ref{fig_tt}, and the Quintet patterns, presented in Figure~\ref{fig_qq}.

There are 12 total possibilities. 
Similar to the previous discussion, we define $L_{01}$, $L_{12}$, $L_{23}$, $L_{34}$, $L_{02}$, $L_{03}$, $L_{13}$, and $L_{14}$ using \eqref{eq_general00}--\eqref{eq_general99}.
Let us define the following shorthands:
\begin{align*}
	\text{Triangle}(i, j; k) = \max\Big\{ L_{ij}, \frac{1}{\alpha} (L_{ik} + L_{kj}) \Big\}
\end{align*}
to represent the triangle ring where the truck travels from node $i$ to node $j$ and the drone visits node $k$, and
\begin{align*}
	\text{Quintet}(i, i_1, i_2, j; k) = \max\Big\{ L_{i i_1} + L_{i_1 i_2} + L_{i_2 j}, \frac{1}{\alpha} (L_{ik} + L_{kj}) \Big\}
\end{align*}
to represent the quintet ring where the truck travels from node $i$ to $i_1$ to $i_2$ to $j$ and the drone visits node $k$.

After considering all 12 possibilities, we take the minimum and multiply it by $\frac{1}{4h}$ to obtain the following upper bound:
\begin{align*}
	\frac{1}{4h} \Eb \Bigg[
	\min\bigg\{ 
	& \text{Triangle}(0, 1; 2) + \text{Triangle}(1, 4; 3),  \
	  \text{Triangle}(0, 1; 3) + \text{Triangle}(1, 4; 2),  \\
	& \text{Triangle}(0, 2; 1) + \text{Triangle}(2, 4; 3),  \
	  \text{Triangle}(0, 2; 3) + \text{Triangle}(2, 4; 1),  \\
	& \text{Triangle}(0, 3; 1) + \text{Triangle}(3, 4; 2),  \
	  \text{Triangle}(0, 3; 2) + \text{Triangle}(3, 4; 1),  \\
	& \text{Quintet}(0, 1, 3, 4; 2), \
	  \text{Quintet}(0, 3, 1, 4; 2), \
	  \text{Quintet}(0, 1, 2, 4; 3), \\ 
	& \text{Quintet}(0, 2, 1, 4; 3), \
	  \text{Quintet}(0, 2, 3, 4; 1), \
	  \text{Quintet}(0, 3, 2, 4; 1)
	\bigg\}
	\Bigg].
	\label{eq_4_point}
\end{align*}
The evaluation of the above bound requires 9-dimensional numerical integration.
For such multi-dimensional integration, Monte Carlo or quasi-Monte Carlo methods are widely used, since the standard quadrature-based methods suffer from the curse of dimensionality \citep{caflisch1998monte}.

\subsection{Fast-Drone Behavior of the Upper Bounds}\label{sec_fast_drone_bound}

The constructions above are feasible TSPD tours and therefore give valid upper bounds for any $\alpha \geq 1$.
However, fixed local ring patterns need not characterize the optimal structure when the drone is very fast; longer alternating synchronization patterns may become relevant.
Nevertheless, the triangle-ring construction already yields a simple analytical bound in the limit $\alpha \to \infty$.

\begin{proposition}
\label{prop_fast_drone_upper}
For the speed-scaled Euclidean TSPD model,
\[
	\limsup_{\alpha \to \infty} \beta_\TSPD(\alpha) \leq \frac{1}{\sqrt{2}}\approx 0.7071.
\]
\end{proposition}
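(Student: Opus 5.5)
We would prove the proposition by applying the triangle-ring bound of Section~\ref{sec_triangle} at a fixed $h$ and letting $\alpha\to\infty$. For every $\alpha\geq1$ and every $h>0$ the triangle-only construction gives
\[
	\beta_\TSPD(\alpha)\leq \frac{1}{2h}\Eb\Big[\max\Big\{L_{02},\tfrac{1}{\alpha}(L_{01}+L_{12})\Big\}\Big].
\]
The simplest route uses the closed-form bound \eqref{eq_triangle_analytical_bound}, which already holds for each $\alpha$. As $\alpha\to\infty$, the factors $(1+\frac{4}{3\alpha^2})$ and $(1+\frac{4}{\alpha^2})$ both tend to $1$. Hence the right-hand side tends to $\frac{1}{\sqrt2}$, and taking $\limsup$ gives the claim directly. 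This is essentially a one-line argument, and I would present it as the main proof.

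As a sanity check, and to show where the constant comes from, I would also give the direct limit argument. Fix $h$. Pointwise, $\max\{L_{02},\frac1\alpha(L_{01}+L_{12})\}\leq L_{02}+\frac1\alpha(L_{01}+L_{12})$, and $\Eb[L_{01}+L_{12}]<\infty$ (indeed at most $2\sqrt{2+h^4/6}$). Therefore
\[
	\limsup_{\alpha\to\infty}\beta_\TSPD(\alpha)\leq \frac{1}{2h}\Eb[L_{02}] \qquad\text{for every } h>0.
\]
Now $L_{02}$ is distributed like the straight-ring length with $Z$ replaced by $Z_1+Z_2\sim\mathrm{Gamma}(2,1)$. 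Jensen's inequality gives $\Eb[L_{02}]\leq\sqrt{6+h^4/6}$. Minimizing $\frac{1}{2h}\sqrt{6+h^4/6}$ over $h$ gives optimum $h^4=36$, that is $h=\sqrt6$, with value $\frac{1}{2}\sqrt{\frac{6}{6}+\frac{6}{6}}=\frac{1}{\sqrt2}$. This matches the limit of \eqref{eq_triangle_analytical_bound}.

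The only step needing care is that the bound must hold uniformly, or at least for each $\alpha$ with $h$ fixed before passing to the limit. This is immediate because $\beta_\TSPD(\alpha)\leq$ (bound at any fixed $h$), and the order of taking $\limsup$ and infimum over $h$ is harmless in that direction. I do not expect a real obstacle. The main thing is to confirm that the triangle construction is feasible for all $\alpha\geq1$ and that the Jensen-based closed form was derived correctly, both of which were established in Section~\ref{sec_triangle}.
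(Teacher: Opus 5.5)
Your proposal is correct, but your main argument takes a slightly different route from the paper's.

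The paper fixes $h$ and observes that $C_3(h,\alpha)\to L_{02}$ pointwise. It then uses the domination $C_3\le L_{02}+L_{01}+L_{12}$ and the dominated convergence theorem to get $\limsup_{\alpha\to\infty}\beta_\TSPD(\alpha)\le\frac{1}{2h}\Eb[L_{02}]$. Only then does it apply Jensen to $L_{02}$ alone and optimize at $h=\sqrt6$.

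You instead start from the already-established finite-$\alpha$ closed form \eqref{eq_triangle_analytical_bound}, where Jensen was applied to the whole ring cost after using $\max\{a,b\}\le\sqrt{a^2+b^2}$ and $(a+b)^2\le2(a^2+b^2)$. You then let $\alpha\to\infty$ in an explicit function. This route has three advantages:
\begin{itemize}
\item it needs no limit theorem;
\item the dependence of the optimal $h$ on $\alpha$ is harmless, since the bound is already an infimum over $h$ at each $\alpha$;
\item it gives the quantitative statement $\beta_\TSPD(\alpha)\le\frac{1}{\sqrt2}\bigl(1+\frac{4}{3\alpha^2}\bigr)^{1/4}\bigl(1+\frac{4}{\alpha^2}\bigr)^{1/4}=\frac{1}{\sqrt2}\bigl(1+O(\alpha^{-2})\bigr)$ at every finite $\alpha$.
\end{itemize}
The cruder relaxation costs nothing in the limit because the drone term vanishes.

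The paper's order of operations (limit first, Jensen last) buys something different. It isolates the exact fast-drone value of the triangle construction, $\inf_{h>0}\frac{1}{2h}\Eb[L_{02}]$, which corresponds to the truck skipping every other point. Evaluating at $h=\sqrt6$ and using strict Jensen shows this is strictly below $1/\sqrt2$, and it could be computed numerically for a sharper constant.

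Your secondary argument recovers that same intermediate bound more elementarily than the paper. The pointwise inequality $\max\{a,b\}\le a+b$ yields $\Eb[C_3]\le\Eb[L_{02}]+\alpha^{-1}\Eb[L_{01}+L_{12}]$, an explicit $O(1/\alpha)$ error at fixed $h$, so dominated convergence is not needed.
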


\begin{proof}
By the triangle-ring construction in Section~\ref{sec_triangle}, for any $h>0$,
\[
	\beta_\TSPD(\alpha) \leq \frac{1}{2h}\Eb[C_3(h,\alpha)],
\]
where
\[
	C_3(h,\alpha)
	=
	\max\left\{
	L_{02},
	\frac{1}{\alpha}(L_{01}+L_{12})
	\right\}.
\]
As $\alpha \to \infty$, $C_3(h,\alpha) \to L_{02}$.
Since $C_3(h,\alpha)\leq L_{02}+L_{01}+L_{12}$ for $\alpha\geq 1$, the dominated convergence theorem gives
\[
	\limsup_{\alpha\to\infty} \beta_\TSPD(\alpha)
	\leq
	\frac{1}{2h}\Eb[L_{02}].
\]
Since
\[
	L_{02}
	=
	\sqrt{(Z_1+Z_2)^2+h^4(U_0-U_2)^2},
\]
Jensen's inequality gives
\begin{align*}
	\Eb[L_{02}]
	&\leq
	\sqrt{\Eb[(Z_1+Z_2)^2]+h^4\Eb[(U_0-U_2)^2]} \\
	&=
	\sqrt{6+\frac{h^4}{6}}.
\end{align*}
Therefore,
\[
	\limsup_{\alpha\to\infty} \beta_\TSPD(\alpha)
	\leq
	\inf_{h>0}\frac{1}{2h}\sqrt{6+\frac{h^4}{6}}
	=
	\frac{1}{\sqrt{2}},
\]
where the infimum is attained at $h=\sqrt{6}$.
\end{proof}

Moreover, the upper-bound constructions reflect larger values of $\alpha$ through the optimization of the strip height $h$.
The optimizing strip height in the analytical triangle bound increases with $\alpha$ and converges to $\sqrt{6}$.
The Monte Carlo evaluations in Section~\ref{sec_results} show the same increasing pattern for the non-straight constructions over the reported speed ratios.

\section{Lower Bounds}
\label{sec_lower_bounds}

Considering the speed-scaled Euclidean distance model in Section~\ref{sec_special_case}, we turn our attention to lower bounds of $\beta_\TSPD$.
\subsection{Existing Lower Bounds}
We first notice this result:

\begin{lemma}[\citealt{agatz2018optimization}]
\label{lem_agatz}
For any given points $x_1, \ldots, x_n$, it holds that
\begin{equation}
\frac{1}{1+\alpha} \TSP(\{x_1, \ldots, x_n\}) \leq \TSPD(\{x_1, \ldots, x_n\})
\end{equation}
for any $\alpha \geq 1$.
\end{lemma}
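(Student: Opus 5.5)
We prove Lemma~\ref{lem_agatz} by turning an optimal TSPD solution into a single truck tour through all points and bounding that tour's length by $(1+\alpha)$ times the makespan. Throughout we work in the speed-scaled Euclidean model, where $\|\cdot\|_T=\|\cdot\|_2$ and $\|\cdot\|_D=\frac{1}{\alpha}\|\cdot\|_2$.

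Fix an optimal ring family $\Rc$ for $\Xc=\{x_1,\ldots,x_n\}$. For each ring $r$, let $\Tc_r$ be the truck path and $\Dc_r$ the drone path, if $r$ has a drone node. By definition of the ring cost,
\[
	t(\Tc_r)\le c(r) \qquad\text{and}\qquad d(\Dc_r)\le c(r).
\]
Since drone distances are Euclidean distances scaled by $1/\alpha$, the Euclidean length of the drone path satisfies $\alpha\, d(\Dc_r)\le \alpha\, c(r)$.

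The key construction is a closed walk $W$ built ring by ring. For each ring $r$ with endpoints $s$ and $t$, we traverse the drone path from $s$ to the drone node $w$ to $t$, then the truck path backward from $t$ to $s$, then the truck path forward again from $s$ to $t$. We then continue into the next ring.

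This procedure double-counts the truck path, which would give the weaker factor $2+\alpha$. To avoid that, we instead build a closed loop of length at most $t(\Tc_r)+\alpha\,d(\Dc_r)$ that starts and ends at $s$, then append the truck path separately. A cleaner choice is the following. Replace ring $r$ by the walk that follows $\Tc_r$ from $s$ to $t$ but takes a detour to $w$ along the drone's route. Concretely, at $s$ we go $s\to w\to s$ using the drone legs reversed, then follow the truck path. The cost of the detour is $\|s-w\|+\|w-s\|$, which is not obviously bounded by the drone path length.

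Better still, we simply use the cycle $\Tc_r$ followed by the reversed drone path. This is a closed walk through all of ring $r$'s points, with Euclidean length
\[
	t(\Tc_r)+\alpha\,d(\Dc_r)\le (1+\alpha)\,c(r).
\]
It starts and ends at $s$. The next ring starts at $t$, not at $s$, so we need to splice the cycles together. We do this by traversing the truck path $s\to t$ once, then the drone path $t\to w\to s$ back to $s$, and then the truck path $s\to t$ again. This re-introduces the double count.

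To avoid re-paying, we concatenate differently, going around the whole tour twice. In the first pass, the walk follows all truck paths in cyclic order, $\sum_r t(\Tc_r)$. In the second pass, it follows all drone paths in cyclic order, $\sum_r \alpha\, d(\Dc_r)$. Consecutive drone paths share endpoints (the combined nodes), so the drone paths concatenate into a closed walk. The truck paths likewise concatenate into a closed walk. Both walks pass through the first combined node, so they can be joined there into a single closed walk. This walk visits every point in $\Xc$, and its Euclidean length is at most
\[
	\sum_r \bigl(t(\Tc_r)+\alpha\, d(\Dc_r)\bigr)\le(1+\alpha)\TSPD(\Xc).
\]

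Finally, shortcutting repeated visits by the triangle inequality yields a Hamiltonian tour. This gives
\[
	\TSP(\Xc)\le(1+\alpha)\TSPD(\Xc).
\]
The degenerate cases with $|\Xc|\le 2$ follow directly from the convention that $\TSPD(\{x,y\})=2\|x-y\|_T$.

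The main obstacle is the splicing step. The argument hinges on the observation that the drone's paths alone already form a closed walk through all combined nodes, so the two vehicle walks can be joined at a single common vertex and no truck edge is paid twice.
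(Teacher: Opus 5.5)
\textbf{Gap: drone-less rings break the splicing step.}
Your final two-pass argument breaks at exactly the step you call the hinge. Under the paper's definition a ring may contain \emph{no} drone node. Every straight ring is like this, and so is any ring with only truck nodes between its combined endpoints. For such a ring there is no drone path $\Dc_r$ at all, because the drone rides on the truck.

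An arbitrary optimal ring family $\Rc$ may contain such rings. Then the drone paths do not concatenate into a closed walk: if ring $r$ from $s$ to $t$ has no drone node, the drone paths end at $s$ and resume only at $t$. To close the walk you must re-traverse $\Tc_r$ for every drone-less ring. The joined walk therefore has length $\sum_r t(\Tc_r)+\sum_{r\text{ with drone}}\alpha\,d(\Dc_r)+\sum_{r\text{ without drone}}t(\Tc_r)$, and your displayed bound does not account for the last term.

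The repair needs a hypothesis your proof never uses, namely $\alpha\ge 1$: for a drone-less ring, $2t(\Tc_r)=2c(r)\le(1+\alpha)c(r)$. Equivalently, the drone's full route, riding legs included, is a closed walk of Euclidean length at most $\alpha\,\TSPD(\Xc)$, because the drone never moves faster than speed $\alpha$. The paper gives no proof of its own here and only cites Agatz et al. Alternatively, you could first reduce to an optimal solution in which every ring has a drone node, but that reduction also needs an argument.

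\textbf{A simpler route you discarded.}
The detour idea you abandoned avoids the splicing problem altogether if you detour from the \emph{nearer} endpoint. Follow the truck's closed walk. For each ring with drone node $w$ and endpoints $s,t$, insert a round trip to $w$ from whichever of $s,t$ is closer. This costs $2\min\{\|s-w\|_2,\|w-t\|_2\}\le\|s-w\|_2+\|w-t\|_2=\alpha\,d(\Dc_r)$.

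Shortcutting then gives $\TSP(\Xc)\le\sum_r\bigl(t(\Tc_r)+\alpha\,d(\Dc_r)\bigr)\le(1+\alpha)\TSPD(\Xc)$. Drone-less rings contribute only $t(\Tc_r)=c(r)$, so this version does not even need $\alpha\ge 1$.
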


This lemma immediately gives that
\begin{equation}\label{tspd_agatz_bound}
	\beta_\TSPD \geq \frac{\beta_\TSP}{1 + \alpha} \geq \frac{\beta}{1 + \alpha}
\end{equation}
for any lower bound $\beta$ for $\beta_\TSP$.
When $\beta = 0.6277$ and $\alpha = 2$, we obtain
\[
	\beta_\TSPD \geq 0.2092.
\]

Lower bounds from \citet{jones2026coordinated} for the sidekick model in which the truck may also serve customers remain valid for the TSPD after restricting their model to a single sidekick.
Their model permits continuous launch and rendezvous points along the truck path, and is therefore a relaxation of the TSPD.
After setting the truck speed to one and the sidekick speed to $\alpha$, their bounds can be written in our notation as follows:
\begin{lemma}[\citealt{jones2026coordinated}]
\label{lem_jones_bmed_explicit}
For the speed-scaled Euclidean TSPD model with $\alpha \geq 1$,
\begin{align}
	\beta_\TSPD
	&\geq
	\frac{\beta_\TSP}{1+\alpha},
	\label{eq_jones_tsp_explicit}\\
	\beta_\TSPD
	&\geq
	\sup_{d \in \mathbb{Z}_{\geq 2}}
	\frac{\sqrt{2}d(d!)^{1/(2d)}}
	{\sqrt{\pi e}(d+1)^{(2d+1)/(2d)}(d+\alpha)}.
	\label{eq_jones_bmed_explicit}
\end{align}
\end{lemma}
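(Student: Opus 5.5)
The plan is to obtain both inequalities by specializing the results of \citet{jones2026coordinated} rather than re-deriving them. The argument runs through a relaxation: every feasible TSPD solution is also feasible for their single-sidekick continuous model. Step one makes this precise. Take a TSPD ring family $\Rc(\Xc)$. Its truck paths, concatenated through the combined nodes, form a closed truck route. Its drone paths are sorties that launch and land at points of that route, namely the combined nodes. Each truck node and combined node is served by the truck and each drone node by the sidekick, so every customer is served, as their model requires. The ring cost $\max\{t(\Tc_r),d(\Dc_r)\}$ is exactly the time the truck must wait for the drone's return in a model with truck speed one and sidekick speed $\alpha$. Summing over rings therefore gives a makespan at least the continuous-model optimum. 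Hence $\TSPD(\Xc)\geq \mathrm{OPT}_{\mathrm{cont}}(\Xc)$ pointwise for every finite $\Xc$. Dividing by $\sqrt n$ and using the a.s. limit of Theorem~\ref{thm_tspd_limit} (or its version in expectation) transfers any asymptotic lower bound for the continuous model to $\beta_\TSPD$.

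Step two handles \eqref{eq_jones_tsp_explicit}, which is the easy part. It already follows from Lemma~\ref{lem_agatz} after dividing by $\sqrt n$ and letting $n\to\infty$ with the BHH theorem. The underlying reason is that the truck route and the drone sorties combined form a closed walk through all points. This walk has total Euclidean length at most $(1+\alpha)$ times the makespan, so it is at least $\TSP$. I would still cite it through Jones's bound for consistency of presentation.

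Step three handles \eqref{eq_jones_bmed_explicit}. Here I would quote their single-sidekick lower bound, which is phrased with a general truck speed $v_T$, sidekick speed $v_S$ and customer density, and then substitute $v_T=1$, $v_S=\alpha$ and unit density on $[0,1]^2$ with $n$ points. Their bound comes from a ``$d$-th nearest neighbor'' counting argument of the form $\Eb[r_d]\sim \Gamma(d+\tfrac12)/(\Gamma(d)\sqrt{\pi n})$. The optimization over a free radius parameter produces the factor $(d!)^{1/(2d)}/(\sqrt{e}(d+1)^{(2d+1)/(2d)})$, and the $d+\alpha$ denominator reflects the truck-versus-drone speed split. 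The bound holds for every integer $d\geq2$, so taking the supremum over $d$ is legitimate.

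I expect the main obstacle to be bookkeeping of normalizations rather than any new idea. Their paper uses its own scaling of time, density and speed, and for one sidekick the truck-serving variant must be the one that matches the TSPD. Their constant then has to be rewritten exactly into the displayed closed form, including the $\sqrt{2}/\sqrt{\pi e}$ prefactor. As a check I would evaluate the expression at $d=2$, $\alpha=1$ and compare with their reported numerics, and confirm that the bound decays like $1/\alpha$ as $\alpha\to\infty$, consistent with \eqref{eq_jones_tsp_explicit}.
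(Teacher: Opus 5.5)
Your plan matches the paper's treatment: the paper gives no independent proof and justifies the lemma as you do, by noting that the single-sidekick continuous model of \citet{jones2026coordinated} (launch and rendezvous anywhere on the truck path, truck may also serve customers) is a relaxation of the TSPD, so its bounds transfer after setting the truck speed to one and the sidekick speed to $\alpha$, and \eqref{eq_jones_tsp_explicit} coincides with the bound from Lemma~\ref{lem_agatz}. One correction to your proposed sanity check: the $d=2$, $\alpha=1$ value (about $0.0972$) does match what the paper tabulates, and each fixed-$d$ term decays like $1/\alpha$, but the supremum over $d$ decays only like $\alpha^{-1/2}$ (asymptotically about $1/(e\sqrt{2\pi\alpha})$, since the optimal $d$ grows with $\alpha$), and nothing requires it to track \eqref{eq_jones_tsp_explicit}---this slower decay is precisely why the paper calls it relevant for large speed ratios.
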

Note that \eqref{eq_jones_tsp_explicit} coincides with \eqref{tspd_agatz_bound}.
In addition, the bound in \eqref{eq_jones_bmed_explicit} is mainly relevant for large speed ratios or a large number of sidekicks; see \citet{jones2026coordinated}.
Although these bounds are valid, \eqref{tspd_agatz_bound}, equivalently \eqref{eq_jones_tsp_explicit}, does not consider the large-scale behavior of the TSPD, while \eqref{eq_jones_bmed_explicit} does not use its synchronization structure.
\subsection{Nearest-Neighbor Lower Bound}
\label{sec_nn_lower_bound}
We first derive a lower bound leveraging both features.

Consider $n$ points, $X_1, \ldots, X_n$, uniformly distributed on $[0,1]^2$, and suppose that a feasible TSPD solution is given.
Let $T$ be the set of customer nodes visited by the truck, including combined nodes, and let $D$ be the set of drone nodes.
Thus, $T$ and $D$ form a partition of the customer nodes.
Let the total distance traveled by the truck be $L_T$ and the total distance traveled by the drone \emph{alone} be $L_D$.
We let $L$ be the makespan of the TSPD.
For each customer $X_i$, let $Y_i$ be the sum of the distances from $X_i$ to its nearest and second-nearest other customers.
Using this, we present the following result:

\begin{lemma}\label{lem_tspd_nn_lower_bound}
For the speed-scaled Euclidean TSPD model with $\alpha \geq 1$, the following holds:
\begin{equation}
	\label{tspd_lower_bound}
	\beta_\TSPD \geq \frac{5}{4(\alpha+2)}.
\end{equation}
\end{lemma}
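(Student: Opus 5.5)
The plan is to combine the Beardwood--Halton--Hammersley nearest-neighbor argument of Section~\ref{sec_tsp_lower_bounds} with the two ways the makespan can be bounded: by truck travel and by drone travel. Fix an arbitrary feasible TSPD solution on $\{X_1,\ldots,X_n\}$ with truck set $T$ and drone set $D$. Write $A=\sum_{i\in T}Y_i$ and $B=\sum_{i\in D}Y_i$, so that $A+B=\sum_{i=1}^n Y_i$.

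The first step is to bound the truck and drone travel separately.

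\emph{Truck.} The truck route is a closed walk visiting every node of $T$. Assume $|T|\ge 3$; smaller cases contribute $O(1)$ and are negligible after dividing by $\sqrt n$. Then each node of $T$ is incident to two truck edges leading to two distinct other customers. The shorter of these edges is at least the nearest-neighbor distance, and the longer is at least the second-nearest distance. Summing over the endpoints of each edge counts every edge twice, so
\[
L_T\ge \tfrac12 A .
\]

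\emph{Drone.} Each drone node $i\in D$ lies in its own ring, whose two combined endpoints are distinct customers. The drone path of that ring therefore consists of two edges from $x_i$ to two distinct other customers, of total Euclidean length at least $Y_i$. Drone paths of different rings are disjoint. Since each ring's cost is at least $\frac1\alpha$ times its drone Euclidean length, summing over rings gives
\[
L\ge \frac{1}{\alpha}\sum_{i\in D}Y_i=\frac{B}{\alpha}.
\]
Likewise, each ring's cost is at least its truck length, so $L\ge L_T\ge A/2$.

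The second step is to combine the two bounds $L\ge A/2$ and $L\ge B/\alpha$ using a convex combination with weights $\frac{2}{\alpha+2}$ and $\frac{\alpha}{\alpha+2}$:
\[
L\ \ge\ \frac{2}{\alpha+2}\cdot\frac{A}{2}+\frac{\alpha}{\alpha+2}\cdot\frac{B}{\alpha}
=\frac{A+B}{\alpha+2}=\frac{1}{\alpha+2}\sum_{i=1}^n Y_i .
\]
This inequality holds for every feasible solution, and hence for $\TSPD(\{X_1,\ldots,X_n\})$. The point is that the weights are chosen to equalize the coefficients of $A$ and $B$, so the unknown partition $T\cup D$ disappears from the bound.

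The final step is to take expectations and use the convergence in expectation established after Theorem~\ref{thm_tspd_limit}. By Lemma~\ref{lem_nearest}, after the usual Poissonization, a point in the interior sees nearest and second-nearest distances with expectations $\frac{1}{2\sqrt n}$ and $\frac{3}{4\sqrt n}$. Hence
\[
\Eb\Big[\sum_i Y_i\Big]=\Big(\tfrac54+o(1)\Big)\sqrt n ,
\]
and dividing by $(\alpha+2)\sqrt n$ gives $\beta_\TSPD\ge \frac{5}{4(\alpha+2)}$.

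I expect the main technical obstacle to be this last, probabilistic step. It requires justifying that boundary points of $[0,1]^2$, for which nearest-neighbor distances are larger, and the discrepancy between $n$ uniform points and a Poisson process change $\Eb\sum_i Y_i$ only by $o(\sqrt n)$. Note that boundary points only increase the $Y_i$, which is the favorable direction for a lower bound. I would handle this as in \citet{beardwood1959shortest} and \citet{steinerberger2015new}: restrict attention to points at distance at least $n^{-1/3}$ from the boundary, and use de-Poissonization with intensity $n(1-\epsilon)$. The combinatorial steps are routine once the distinctness of neighbors, and the degenerate case $|T|\le 2$, are noted.
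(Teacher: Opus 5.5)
Your proposal is correct and matches the paper's proof step by step: the truck bound $L_T\ge\frac12\sum_{T}Y_i$ from double-counting the two incident truck edges, the drone bound $L\ge\frac1\alpha\sum_{D}Y_i$ from the distinct launch and recovery nodes, the combination $\max\{a/2,b/\alpha\}\ge(a+b)/(\alpha+2)$ (your convex combination), and Lemma~\ref{lem_nearest} giving $\Eb\sum_iY_i\approx\frac54\sqrt n$. Your handling of the degenerate case $|T|\le2$ (which cannot occur for large $n$, since $|D|\le|T|$ forces $|T|\ge n/2$) and of boundary effects and de-Poissonization supplies details the paper skips by applying Lemma~\ref{lem_nearest} directly to $\Eb Y_1$.
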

\begin{proof}
For any given TSPD solution,
\[
	L \geq \max\Big\{ L_T, \frac{1}{\alpha} L_D \Big\}.
\]
We first bound $L_T$ and $L_D$ separately.

For each truck-visited node $X_i \in T$, the truck route has two incident truck edges.
Since $Y_i$ is the sum of the distances from $X_i$ to its nearest and second-nearest other customers, the sum of these two incident edge lengths is at least $Y_i$.
Summing over all $X_i \in T$ and noting that each truck edge is counted twice gives
\begin{equation}
\label{eq_truck_nn_lower_bound}
	L_T \geq \frac{1}{2}\sum_{X_i \in T} Y_i.
\end{equation}

For each drone-served node $X_i \in D$, the drone travels from a launch node to $X_i$ and then from $X_i$ to a recovery node.
The launch and recovery nodes are distinct customer nodes under the ring definition, so the total length of these two drone edges is at least $Y_i$.
Hence,
\[
	L_D \geq \sum_{X_i \in D} Y_i.
\]

Combining the truck and drone bounds,
\[
	L
	\geq
	\max\Bigg\{
		\frac{1}{2}\sum_{X_i \in T}Y_i,
		\frac{1}{\alpha}\sum_{X_i \in D}Y_i
	\Bigg\}
	\geq
	\frac{
		\sum_{X_i \in T}Y_i
		+
		\sum_{X_i \in D}Y_i
	}{\alpha+2}
	= \frac{1}{\alpha+2}\sum_{i=1}^n Y_i.
\]
The second inequality follows from $\max\{a/2,b/\alpha\} \geq (a+b)/(\alpha+2)$ for any $a,b\geq 0$.
Taking expectations and using exchangeability,
\[
	\Eb L \geq \frac{1}{\alpha+2}\Eb\Bigg[\sum_{i=1}^n Y_i\Bigg]
	= \frac{n}{\alpha+2}\Eb Y_1.
\]
By Lemma~\ref{lem_nearest},
\[
	\Eb Y_1
	=
	\left(\frac{1}{2}+\frac{3}{4}\right)\frac{1}{\sqrt n}
	=
	\frac{5}{4\sqrt n}.
\]
Therefore,
\[
	\liminf_{n\to\infty}\frac{\Eb L}{\sqrt n}
	\geq
	\frac{5}{4(\alpha+2)},
\]
which yields \eqref{tspd_lower_bound}.
\end{proof}

The lower bound in \eqref{tspd_lower_bound} dominates the bound in \eqref{tspd_agatz_bound} whenever $\beta \leq 5/6$.
Indeed, the inequality
\[
	\frac{5}{4(\alpha+2)} \geq \frac{\beta}{1+\alpha}
\]
is equivalent to
\[
	\beta \leq \frac{5}{4}\frac{\alpha+1}{\alpha+2}.
\]
For all $\alpha \geq 1$, the right-hand side is at least $5/6$.
Since $\beta=0.6277 < 5/6$, the lower bound in \eqref{tspd_lower_bound} is stronger than the bound in \eqref{tspd_agatz_bound} for all $\alpha \geq 1$.
This also holds if we use the empirical value $\beta_\TSP \approx 0.71$.

\subsection{$k$-TSP Lower Bound}
However, the lower bound in \eqref{tspd_lower_bound}, like those in \eqref{tspd_agatz_bound} and \eqref{eq_jones_bmed_explicit}, vanishes as $\alpha \to \infty$.
In the fast-drone regime, we notice a natural connection to the $k$-TSP, which seeks a shortest path visiting $k$ of $n$ customers.
As the drone speed increases, its contribution to the makespan diminishes, and the makespan becomes increasingly governed by the truck path.
This observation yields a lower bound that remains positive as $\alpha \to \infty$.

\begin{lemma}\label{lem_tspd_ktsp_lower_bound}
For the speed-scaled Euclidean TSPD model with $\alpha \geq 1$, the following holds:
\begin{equation}
	\label{tspd_ktsp_lower_bound}
	\beta_\TSPD \geq \frac{1}{e\sqrt{2\pi}} \approx 0.1468.
\end{equation}
\end{lemma}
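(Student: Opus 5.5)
The plan is to lower-bound the makespan by the truck length alone. The key combinatorial fact is that the truck must visit at least half of the customers, so the problem reduces to a lower bound for the $k$-TSP with $k=\lceil n/2\rceil$. That lower bound then comes from a first-moment (union-bound) counting argument over short paths.

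\textbf{Step 1 (the truck visits at least half the customers).} Fix a feasible TSPD solution on $n\geq 3$ points and let $T$, $D$ be as in Section~\ref{sec_nn_lower_bound}. The rings are concatenated cyclically through the combined nodes, so the number of rings equals the number of combined nodes, which is at most $|T|$. Each ring contains at most one drone node, hence $|D|\leq |T|$ and $|T|\geq n/2$. The truck route is a closed tour through all of $T$, so
\[
L\geq L_T\geq \min\{\text{length of a path through } S : S\subseteq\Xc,\ |S|=k\},\qquad k=\lceil n/2\rceil .
\]
Call the right-hand side $\kTSP_k$. This holds for every feasible solution, hence for $\TSPD(\Xc)$.

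\textbf{Step 2 (first-moment bound on short $k$-paths).} Fix $c>0$ and $\ell=c\sqrt n$. I will bound the expected number $N$ of ordered $k$-tuples of distinct indices $(i_1,\dots,i_k)$ with $\sum_{j}\|X_{i_{j+1}}-X_{i_j}\|_2\leq \ell$. There are at most $n^k$ tuples. Condition on $X_{i_1}$ and integrate the successive displacements in polar coordinates. Densities are at most $1$ on the plane, and boundary effects only help. This gives
\[
\Pr\Big(\textstyle\sum_j r_j\leq \ell\Big)\leq \int_{\sum r_j\le \ell}\prod_{j=1}^{k-1}2\pi r_j\,dr_j=\frac{(2\pi)^{k-1}\ell^{2(k-1)}}{(2k-2)!},
\]
where the last equality is a Dirichlet-type integral that I will verify by induction on $k$. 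Hence
\[
\Eb N\leq n^k\,\frac{(2\pi)^{k-1}(c\sqrt n)^{2k-2}}{(2k-2)!}.
\]

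\textbf{Step 3 (Stirling).} With $k\approx n/2$ we have $2k-2\approx n$ and $(2k-2)!\approx (n/e)^n$ up to polynomial factors. Therefore
\[
\Eb N\lesssim \mathrm{poly}(n)\,\big(2\pi c^2 e^2\big)^{n/2}.
\]
This is exponentially small precisely when $c<1/(e\sqrt{2\pi})$.

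\textbf{Step 4 (conclusion).} Since $\Pr(\kTSP_k\leq c\sqrt n)\leq \Eb N$ is summable in $n$, Borel--Cantelli gives $\liminf_n \TSPD(\{X_1,\ldots,X_n\})/\sqrt n\geq c$ almost surely. Combined with the almost-sure limit from Theorem~\ref{thm_tspd_limit}, this yields $\beta_\TSPD\geq c$ for every $c<1/(e\sqrt{2\pi})$, which is the claim. The bound is independent of $\alpha$, as required.

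\textbf{Main obstacle.} Step 1 is short. The delicate part is making Steps 2--3 rigorous: the Dirichlet integral must be computed exactly, and the Stirling estimates must absorb the polynomial prefactors and the rounding of $k=\lceil n/2\rceil$ (i.e., $2k-2$ versus $n$). I will first do the computation for even $n$ and then note that odd $n$ changes only polynomial factors.
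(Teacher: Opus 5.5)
Your proof is correct. Its skeleton is the paper's: Step~1 is exactly the paper's reduction ($|D|\le|T|$, so the truck cycle contains a path through $\lceil n/2\rceil$ customers and $L\ge L_T\ge\kTSP_k$ with $k=\lceil n/2\rceil$). You differ in how the $k$-TSP estimate is obtained. The paper does not prove it: it cites the expectation bound of Blanchard et al., substitutes $k=\lceil n/2\rceil$, and passes to $\beta_\TSPD$ through $\Eb[\TSPD]/\sqrt n\to\beta_\TSPD$ (the bounded-convergence remark after Theorem~\ref{thm_tspd_limit}). Your Steps~2--3 derive the bound from first principles, and they are the same first-moment computation in disguise. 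Take $\ell=\epsilon\sqrt{2/(e^2\pi)}\,(k-1)\,n^{-\frac12(1+\frac1{k-1})}$ in your estimate, with the $n^k$ count and Stirling's lower bound on $(2k-2)!$. This gives $\Pr(\kTSP_k\le\ell)\le\epsilon^{2k-2}/(2\sqrt{\pi(k-1)})$, and integrating this tail reproduces the cited inequality exactly. Your route therefore buys self-containedness and an exponentially small tail, so you can conclude from the almost-sure limit via Borel--Cantelli rather than through expectations; the paper's route buys brevity. Working the computation out yourself also exposes slack that the citation hides. The number of ordered $k$-tuples of distinct indices is $n!/(n-k)!$, not $n^k$, and for $k=n/2$ it is smaller by roughly a factor $(2/e)^{n/2}$. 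With this exact count your estimate becomes, for even $n$, $\Pr(\kTSP_{n/2}\le c\sqrt n)\le\frac{n}{2\pi c^2}(4\pi e c^2)^{n/2}$, using $n!/(n-2)!\le n^2$ and $(n/2)!\ge(n/(2e))^{n/2}$; odd $n$ is analogous. The same argument therefore gives $\beta_\TSPD\ge 1/(2\sqrt{\pi e})\approx 0.1711$, improving the stated $1/(e\sqrt{2\pi})\approx 0.1468$.
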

\begin{proof}
For $2 \leq k \leq n$, let $\kTSP(\{X_1,\ldots,X_n\},k)$ denote the minimum length of a path visiting $k$ points selected from $\{X_1,\ldots,X_n\}$.
Let $m=|T|$ be the number of customer nodes visited by the truck.
The number of rings equals the number of combined nodes, and each ring contains at most one drone-served node.
Therefore, $|D|\leq |T|$, and hence $m\geq \lceil n/2\rceil$.
Since the truck route is a cycle through $m$ customer nodes, it contains a path through $\lceil n/2\rceil$ customer nodes.
Therefore,
\[
	L \geq L_T \geq \kTSP(\{X_1,\ldots,X_n\},\lceil n/2\rceil).
\]

\citet{blanchard2024probabilistic} give
\[
	\Eb[\kTSP(\{X_1,\ldots,X_n\},k)]
	\geq
	\sqrt{\frac{2}{e^2\pi}}
	\frac{k-1}{n^{\frac{1}{2}(1+\frac{1}{k-1})}}
	\int_0^1
	\left(1-\frac{\epsilon^{2k-2}}{2\sqrt{\pi(k-1)}}\right)\,d\epsilon
\]
and hence
\[
	\Eb[\kTSP(\{X_1,\ldots,X_n\},k)]
	\geq
	\sqrt{\frac{2}{e^2\pi}}
	\frac{k-1}{n^{\frac{1}{2}(1+\frac{1}{k-1})}}
	\left(1-\frac{1}{2(2k-1)\sqrt{\pi(k-1)}}\right).
\]
Set $k=\lceil n/2\rceil$.
Dividing by $\sqrt n$ and using $L\geq \kTSP(\{X_1,\ldots,X_n\},\lceil n/2\rceil)$ gives
\[
\begin{aligned}
	\frac{\Eb L}{\sqrt n}
	&\geq
	\frac{\Eb[\kTSP(\{X_1,\ldots,X_n\},\lceil n/2\rceil)]}{\sqrt n} \\
	&\geq
	\sqrt{\frac{2}{e^2\pi}}
	\frac{\lceil n/2\rceil-1}{n^{1+\frac{1}{2(\lceil n/2\rceil-1)}}}
	\left(1-\frac{1}{2(2\lceil n/2\rceil-1)\sqrt{\pi(\lceil n/2\rceil-1)}}\right).
\end{aligned}
\]
As $n\to\infty$,
\[
\begin{gathered}
	\frac{\lceil n/2\rceil-1}{n}\longrightarrow\frac12,
	\qquad
	n^{-\frac{1}{2(\lceil n/2\rceil-1)}}
	=
	\exp\left(-\frac{\log n}{2(\lceil n/2\rceil-1)}\right)
	\longrightarrow1, \\
	1-\frac{1}{2(2\lceil n/2\rceil-1)\sqrt{\pi(\lceil n/2\rceil-1)}}\longrightarrow1.
\end{gathered}
\]
Therefore,
\[
	\liminf_{n\to\infty}\frac{\Eb L}{\sqrt n}
	\geq
	\sqrt{\frac{2}{e^2\pi}}\frac12
	=
	\frac{1}{e\sqrt{2\pi}},
\]
which yields \eqref{tspd_ktsp_lower_bound}.
\end{proof}

The lower bound in \eqref{tspd_ktsp_lower_bound} dominates the bound in \eqref{eq_jones_bmed_explicit}.
By the arithmetic--geometric mean inequality,
\[
	(d!)^{1/(2d)} \leq \sqrt{\frac{d+1}{2}}.
\]
Therefore, for every $d\geq 2$ and $\alpha\geq 1$,
\[
\begin{aligned}
	\frac{\sqrt{2}d(d!)^{1/(2d)}}
	{\sqrt{\pi e}(d+1)^{(2d+1)/(2d)}(d+\alpha)}
	&\leq
	\frac{d}{\sqrt{\pi e}(d+1)^{1/2+1/(2d)}(d+\alpha)} \\
	&\leq
	\frac{d}{\sqrt{\pi e}(d+1)^{3/2}}
	\leq
	\frac{1}{e\sqrt{2\pi}}.
\end{aligned}
\]
The last inequality follows because $(d+1)^3/d^2$ is nondecreasing for $d\geq 2$, and hence $(d+1)^3/d^2\geq 27/4>2e$.
Taking the supremum over $d\in\mathbb Z_{\geq 2}$ proves the claim.

Together with the nearest-neighbor bound in Section~\ref{sec_nn_lower_bound}, the $k$-TSP bound yields the following strengthened lower bound:

\begin{theorem}\label{thm_tspd_combined_lower_bound}
For the speed-scaled Euclidean TSPD model with $\alpha\geq 1$,
\begin{equation}
	\label{tspd_combined_lower_bound}
	\beta_\TSPD
	\geq
	\max\left\{
		\frac{5}{4(\alpha+2)},
		\frac{1}{e\sqrt{2\pi}}
	\right\}.
\end{equation}
\end{theorem}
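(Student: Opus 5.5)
This theorem is a direct combination of two lower bounds already established for the same constant, so the plan is short. Both Lemma~\ref{lem_tspd_nn_lower_bound} and Lemma~\ref{lem_tspd_ktsp_lower_bound} are stated for the speed-scaled Euclidean TSPD model with the same $\alpha\geq1$. Both bound the same quantity $\beta_\TSPD=\beta_\TSPD(\alpha)$, whose existence is guaranteed by Theorem~\ref{thm_tspd_limit}. If a real number is at least each of two quantities, it is at least their maximum, and \eqref{tspd_combined_lower_bound} follows at once.

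To make the argument self-contained at the level of limits, I would phrase it through expectations. By the bounded-convergence remark after Theorem~\ref{thm_tspd_limit}, $\Eb[\TSPD(\{X_1,\ldots,X_n\})]/\sqrt n\to\beta_\TSPD$. Each lemma's proof shows that $\liminf_{n\to\infty}\Eb L/\sqrt n$ is bounded below by its respective constant. This holds for the makespan $L$ of any feasible solution, and in particular for an optimal one, where $L=\TSPD(\{X_1,\ldots,X_n\})$.

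For a fixed $\alpha$, both inequalities therefore hold for the limit $\beta_\TSPD(\alpha)$ simultaneously, and taking the larger of the two right-hand sides gives the claim. There is no real obstacle. The only care needed is to confirm that neither lemma imposes an extra hypothesis beyond $\alpha\geq1$ and the speed-scaled Euclidean metrics. For instance, the $k$-TSP argument uses only $|D|\leq|T|$, which holds for every feasible solution.

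Optionally, I would remark where each bound is active. The two terms cross where $5/(4(\alpha+2))=1/(e\sqrt{2\pi})$, that is, at $\alpha=\tfrac{5e\sqrt{2\pi}}{4}-2\approx 6.52$. The nearest-neighbor bound dominates for smaller $\alpha$, and the $k$-TSP bound dominates for larger $\alpha$.
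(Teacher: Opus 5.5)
Your proposal is correct and follows the paper's proof, which simply states that the bound follows from Lemmas~\ref{lem_tspd_nn_lower_bound} and~\ref{lem_tspd_ktsp_lower_bound}: both lemmas lower-bound the same constant $\beta_{\TSPD}(\alpha)$, so $\beta_{\TSPD}(\alpha)$ is at least their maximum. Your extra remarks are consistent with the paper. These are routing the argument through $\Eb[\TSPD(\{X_1,\ldots,X_n\})]/\sqrt{n}\to\beta_{\TSPD}$, checking that $|D|\leq|T|$ holds for every feasible solution, and locating the crossover at $\alpha=\frac{5e\sqrt{2\pi}}{4}-2\approx 6.5172$.
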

\begin{proof}
The result follows directly from Lemmas~\ref{lem_tspd_nn_lower_bound} and~\ref{lem_tspd_ktsp_lower_bound}.
\end{proof}

The two terms in \eqref{tspd_combined_lower_bound} are equal at
\[
	\alpha
	=
	\frac{5e\sqrt{2\pi}}{4}-2
	\approx 6.5172.
\]
The nearest-neighbor bound is stronger below this value, whereas the $k$-TSP bound is stronger above it.

\section{Numerical Results}
\label{sec_results}

In this section, we numerically evaluate the bounds derived in Sections~\ref{sec_upper_bounds} and~\ref{sec_lower_bounds} for the speed-scaled Euclidean TSPD model in Section~\ref{sec_special_case}.
We also provide a comparison with the continuous model of \citet{carlsson2018coordinated}.

\subsection{Bounds and Empirical Estimates}

While the computation of the lower bounds and the analytical upper bounds is direct, tighter upper bounds require evaluations of multi-dimensional integrals.
Since these integrals represent expectations, Monte Carlo integration is a natural choice for numerical integration.
See \citet{robert1999monte}, for example, for the details of Monte Carlo integration.
We implemented a classical Monte Carlo integration in the Julia language and used the \texttt{Optim.jl} package \citep{mogensen2018optim} to find the best $h$ value for each simulation.

\begin{table}
\centering
\caption{
Numerical results for the speed-scaled Euclidean TSPD model.
(1) The top section presents the Monte Carlo evaluation of the upper bounds on $\beta_\TSPD$ with 20 million samples.
(2) The middle section presents values obtained by ICP initialized with the LKH heuristic;
the average of 100 random instances is reported for each $(n, \alpha)$.
(3) The bottom section reports lower bounds (LB).
Values following $\pm$ are 95\% confidence-interval half-widths.
All values are rounded.
}
\label{tbl_euclidean_combined}
\begingroup
\setlength{\tabcolsep}{4pt}
\begin{tabular}{l r rrrrr}
\toprule
&  & \multicolumn{5}{c}{$\TSPD / \sqrt{n}$} \\
  \cmidrule(lr){3-7}
&
&	$\alpha = 1.0$
&	$\alpha = 1.5$
&	$\alpha = 2.0$
&	$\alpha = 2.5$
&	$\alpha = 3.0$\\
\midrule
& straight only         & \meanstd{0.9211}{0.0002}  & \meanstd{0.9211}{0.0002}  & \meanstd{0.9211}{0.0002}  & \meanstd{0.9211}{0.0002}  & \meanstd{0.9211}{0.0002} \\
& triangle only         & \meanstd{0.9212}{0.0002}  & \meanstd{0.7424}{0.0001}  & \meanstd{0.6906}{0.0001}  & \meanstd{0.6671}{0.0001}  & \meanstd{0.6548}{0.0001} \\
& quartet only          & \meanstd{0.8318}{0.0001}  & \meanstd{0.6839}{0.0001}  & \meanstd{0.6568}{0.0001}  & \meanstd{0.6484}{0.0001}  & \meanstd{0.6452}{0.0001} \\
& 5-point patterns      & \meanstd{0.7606}{0.0001}  & \meanstd{0.6544}{0.0001}  & \meanstd{0.6131}{0.0001}  & \meanstd{0.5829}{0.0001}  & \meanstd{0.5616}{0.0001} \\
\midrule
\multirow{11}{*}{$n$}
&    20                 & \meanstd{0.6924}{0.0124}  & \meanstd{0.6084}{0.0118}  & \meanstd{0.5640}{0.0116}  & \meanstd{0.5346}{0.0115}  & \meanstd{0.5158}{0.0120} \\
&    30                 & \meanstd{0.6812}{0.0078}  & \meanstd{0.6012}{0.0069}  & \meanstd{0.5571}{0.0072}  & \meanstd{0.5305}{0.0074}  & \meanstd{0.5105}{0.0074} \\
&    50                 & \meanstd{0.6603}{0.0058}  & \meanstd{0.5821}{0.0054}  & \meanstd{0.5413}{0.0055}  & \meanstd{0.5128}{0.0060}  & \meanstd{0.4931}{0.0064} \\
&    75                 & \meanstd{0.6383}{0.0052}  & \meanstd{0.5637}{0.0048}  & \meanstd{0.5243}{0.0050}  & \meanstd{0.4977}{0.0051}  & \meanstd{0.4781}{0.0052} \\
&   100                 & \meanstd{0.6345}{0.0042}  & \meanstd{0.5607}{0.0040}  & \meanstd{0.5223}{0.0040}  & \meanstd{0.4977}{0.0044}  & \meanstd{0.4803}{0.0047} \\
&   200                 & \meanstd{0.6206}{0.0024}  & \meanstd{0.5499}{0.0023}  & \meanstd{0.5132}{0.0026}  & \meanstd{0.4893}{0.0028}  & \meanstd{0.4725}{0.0032} \\
&   500                 & \meanstd{0.6084}{0.0016}  & \meanstd{0.5397}{0.0014}  & \meanstd{0.5043}{0.0014}  & \meanstd{0.4818}{0.0016}  & \meanstd{0.4655}{0.0016} \\
&  1000                 & \meanstd{0.6021}{0.0011}  & \meanstd{0.5342}{0.0011}  & \meanstd{0.4993}{0.0013}  & \meanstd{0.4770}{0.0012}  & \meanstd{0.4609}{0.0013} \\
&  2000                 & \meanstd{0.5982}{0.0009}  & \meanstd{0.5316}{0.0009}  & \meanstd{0.4974}{0.0009}  & \meanstd{0.4756}{0.0010}  & \meanstd{0.4600}{0.0010} \\
&  5000                 & \meanstd{0.5946}{0.0005}  & \meanstd{0.5285}{0.0005}  & \meanstd{0.4946}{0.0005}  & \meanstd{0.4731}{0.0005}  & \meanstd{0.4576}{0.0005} \\
& 10000                 & \meanstd{0.5934}{0.0003}  & \meanstd{0.5277}{0.0003}  & \meanstd{0.4940}{0.0003}  & \meanstd{0.4726}{0.0004}  & \meanstd{0.4573}{0.0004} \\
\midrule
\multirow{4}{*}{LB}
& Ours                    & \meanonly{0.4167}  & \meanonly{0.3571}  & \meanonly{0.3125}  & \meanonly{0.2778}  & \meanonly{0.2500} \\
& \citeauthor{agatz2018optimization}$^*$
                          & \meanonly{0.3550}  & \meanonly{0.2840}  & \meanonly{0.2367}  & \meanonly{0.2029}  & \meanonly{0.1775} \\
& \citeauthor{agatz2018optimization}
                          & \meanonly{0.3139}  & \meanonly{0.2511}  & \meanonly{0.2092}  & \meanonly{0.1793}  & \meanonly{0.1569} \\
& \citeauthor{jones2026coordinated}
                          & \meanonly{0.0972}  & \meanonly{0.0863}  & \meanonly{0.0785}  & \meanonly{0.0726}  & \meanonly{0.0680} \\
\bottomrule
\end{tabular}
\endgroup
\end{table}

\begin{table}
\centering
\caption{Optimizing strip heights $h$ for the Monte Carlo evaluations of the non-straight Euclidean upper bounds.}
\label{tbl_euclidean_h}
\begin{tabular}{lccccc}
\toprule
& \multicolumn{5}{c}{Optimizing $h$} \\
\cmidrule(lr){2-6}
& $\alpha = 1.0$
& $\alpha = 1.5$
& $\alpha = 2.0$
& $\alpha = 2.5$
& $\alpha = 3.0$\\
\midrule
triangle only    & 1.7343 & 1.9587 & 2.1233 & 2.2249 & 2.2896 \\
quartet only     & 1.9047 & 2.1703 & 2.3146 & 2.3781 & 2.4071 \\
5-point patterns & 2.0828 & 2.3086 & 2.4512 & 2.5779 & 2.6882 \\
\bottomrule
\end{tabular}
\end{table}

\begin{figure}
\centering
\begin{subfigure}[t]{0.49\textwidth}
\centering
\begin{tikzpicture}
\begin{axis}[
	width=0.94\linewidth,
	height=0.68\linewidth,
	xmode=log,
	log basis x=10,
	xlabel={$\alpha$},
	ylabel={Lower bound on $\beta_\TSPD$},
	ylabel style={xshift=8pt},
	xmin=1,
	xmax=100,
	ymin=0,
	ymax=0.45,
	xtick={1,2,5,10,20,50,100},
	xticklabels={1,2,5,10,20,50,100},
	ytick={0,0.1,0.2,0.3,0.4},
	grid=major,
	legend cell align={left},
	legend columns=2,
	legend style={at={(0.5,1.03)}, anchor=south, draw=none, fill=none, font=\scriptsize},
	tick label style={font=\small},
	label style={font=\small}
]
\addplot[black, thick, mark=square*, mark repeat=2, mark options={fill=black, draw=black, line width=0pt}]
	coordinates {(1.0,0.416667) (1.5,0.357143) (2.0,0.312500) (2.5,0.277778) (3.0,0.250000) (5.0,0.178571) (6.5172,0.146763) (10.0,0.146763) (20.0,0.146763) (50.0,0.146763) (100.0,0.146763)};
\addlegendentry{Ours}
\addplot[black, thick, dashed, mark=*, mark repeat=2, mark options={fill=black, draw=black, line width=0pt}]
	coordinates {(1.0,0.355000) (1.5,0.284000) (2.0,0.236667) (2.5,0.202857) (3.0,0.177500) (5.0,0.118333) (6.5172,0.094450) (10.0,0.064545) (20.0,0.033810) (50.0,0.013922) (100.0,0.007030)};
\addlegendentry{Agatz et al.$^*$}
\addplot[black, thick, dashdotted, mark=triangle*, mark size=2.6pt, mark repeat=2, mark options={fill=black, draw=black, line width=0pt}]
	coordinates {(1.0,0.313850) (1.5,0.251080) (2.0,0.209233) (2.5,0.179343) (3.0,0.156925) (5.0,0.104617) (6.5172,0.083502) (10.0,0.057064) (20.0,0.029890) (50.0,0.012308) (100.0,0.006215)};
\addlegendentry{Agatz et al.}
\addplot[black, thick, dotted, mark=diamond*, mark repeat=2, mark options={fill=black, draw=black, line width=0pt}]
	coordinates {(1.0,0.097176) (1.5,0.086296) (2.0,0.078503) (2.5,0.072553) (3.0,0.068018) (5.0,0.055963) (6.5172,0.050297) (10.0,0.042024) (20.0,0.030915) (50.0,0.020159) (100.0,0.014436)};
\addlegendentry{Jones et al.}
\end{axis}
\end{tikzpicture}
\caption{Lower-bound comparison.}
\label{fig_euclidean_lower_bounds}
\end{subfigure}\hfill
\begin{subfigure}[t]{0.49\textwidth}
\centering
\begin{tikzpicture}
\begin{axis}[
	width=0.94\linewidth,
	height=0.68\linewidth,
	xlabel={$\alpha$},
	ylabel={$\TSPD/\sqrt{n}$},
	ylabel style={xshift=8pt},
	xmin=0.9,
	xmax=3.1,
	ymin=0.2,
	ymax=0.8,
	xtick={1.0,1.5,2.0,2.5,3.0},
	ytick={0.2,0.3,0.4,0.5,0.6,0.7,0.8},
	grid=major,
	legend cell align={left},
	legend columns=2,
	legend style={at={(0.5,1.03)}, anchor=south, draw=none, fill=none, font=\scriptsize},
	tick label style={font=\small},
	label style={font=\small}
]
\addplot[black, thick, mark=square*, mark options={fill=black, draw=black, line width=0pt}]
	coordinates {(1.0,0.7606) (1.5,0.6544) (2.0,0.6131) (2.5,0.5829) (3.0,0.5616)};
\addlegendentry{5-point upper bound}
\addplot[black, thick, dashed, mark=*, mark options={fill=black, draw=black, line width=0pt}]
	coordinates {(1.0,0.5934) (1.5,0.5277) (2.0,0.4940) (2.5,0.4726) (3.0,0.4573)};
\addlegendentry{ICP, $n=10000$}
\addplot[black, thick, dotted, mark=triangle*, mark size=2.6pt, mark options={fill=black, draw=black, line width=0pt}]
	coordinates {(1.0,0.4167) (1.5,0.3571) (2.0,0.3125) (2.5,0.2778) (3.0,0.2500)};
\addlegendentry{our lower bound}
\end{axis}
\end{tikzpicture}
\caption{Bounds and ICP estimate.}
\label{fig_euclidean_bounds_numerical}
\end{subfigure}
\caption{Numerical results for the speed-scaled Euclidean TSPD model. (a) Lower-bound comparison over a broader range of speed ratios. (b) Best upper bound, ICP estimate, and our lower bound. The starred Agatz et al. curve uses the empirical value $\beta_\TSP=0.71$.}
\label{fig_euclidean_bounds}
\end{figure}

Table~\ref{tbl_euclidean_combined} reports the computational results for the speed-scaled Euclidean TSPD model.
The top section displays the Monte Carlo estimates of the upper bounds for various ring patterns.
As expected, the 5-point patterns provide the best bounds, while the bounds improve as we use more complicated ring patterns.
Table~\ref{tbl_euclidean_h} reports the corresponding optimizing values of $h$ for the Monte Carlo evaluations.
For the non-straight constructions, the optimizing $h$ increases with $\alpha$.

The bottom section lists lower bounds.
Ours reports \eqref{tspd_combined_lower_bound}.
Since all speed ratios in Table~\ref{tbl_euclidean_combined} are below 6.5172, the reported values are determined by the nearest-neighbor term.
\citeauthor{agatz2018optimization}$^*$ reports \eqref{tspd_agatz_bound} with the empirical value $\beta_\TSP=0.71$, while \citeauthor{agatz2018optimization} reports the same bound with the valid lower bound $\beta=0.6277$.
\citeauthor{jones2026coordinated} reports \eqref{eq_jones_bmed_explicit}.
For \eqref{eq_jones_bmed_explicit}, we enumerated $d=2,\ldots,99$.
For $d \geq 100$, since $d! \leq d^d$, $(d+1)^{(2d+1)/(2d)} \geq d$, and $d+\alpha \geq d$,
\[
	\frac{\sqrt{2}d(d!)^{1/(2d)}}{\sqrt{\pi e}(d+1)^{(2d+1)/(2d)}(d+\alpha)}
	\leq
	\frac{\sqrt{2}d^{3/2}}{\sqrt{\pi e}d^2}
	=
	\frac{\sqrt{2/(\pi e)}}{\sqrt d}
	\leq 0.0484,
\]
which is below the reported values.
Thus, the supremum is attained in the enumerated range.
The maximizing values of $d$ for $\alpha=1.0,1.5,2.0,2.5,3.0$ are $2,3,4,5,5$, respectively.
As expected, our lower bound is stronger than the bounds from both \citet{agatz2018optimization} and \citet{jones2026coordinated}.
Figure~\ref{fig_euclidean_lower_bounds} compares these lower bounds over a broader range of speed ratios.

To compare the upper and lower bounds for TSPD, we obtained actual solutions of TSPD in the middle section of Table~\ref{tbl_euclidean_combined}.
Since we do not have an exact algorithm for large-scale TSPD, we used the iterative chainlet partitioning (ICP) algorithm of \citet{lee2025iterative} to obtain high-quality heuristic solutions.
The ICP algorithm starts with an initial TSP tour and iteratively improves it via local search and dynamic programming.
Compared to existing algorithms, ICP provides the best solutions for most benchmark instances.
We obtained initial tours by the LKH solver~\citep{helsgaun2000effective}.
We solved 100 randomly generated instances for each instance size $n$ and reported the average value.
Since ICP is a heuristic algorithm, the values in Table~\ref{tbl_euclidean_combined} are approximations to upper bounds.
For $\alpha=2.0$, these values stabilize near 0.49 as $n$ increases, leading us to conjecture that $\beta_\TSPD \approx 0.49$.
Figure~\ref{fig_euclidean_bounds_numerical} summarizes the best upper bound, the ICP estimate for $n=10000$, and our lower bound.

\subsection{Cost of Discreteness}

We assess the cost of restricting launch and recovery locations to customer nodes by comparing our empirical TSPD results with numerical results for the continuous model.
In addition to their analytical results, \citet{carlsson2018coordinated} obtain computational results using a heuristic algorithm for a model in which the truck may also serve customer locations, while drone launch and rendezvous points are still chosen continuously along the truck path.
Table~\ref{tbl_discreteness_cost} reports the relative reduction in service time obtained by using a drone, measured against the corresponding truck-only TSP service time.
The \emph{Continuous} row is converted from the computational results of \citet{carlsson2018coordinated}, and the \emph{Discrete} row is computed by ICP initialized with LKH.
Both rows present averages over 200 random instances with $n=500$.

\begin{table}
\centering
\caption{
Relative service-time reduction in the continuous and discrete models.
The \emph{Continuous} row is converted from \citet{carlsson2018coordinated}, and the \emph{Discrete} row is obtained by ICP initialized with the LKH heuristic.
The last row reports the gap between the two reductions.
}
\label{tbl_discreteness_cost}
\begin{tabular}{l ccccc}
\toprule
& \multicolumn{5}{c}{$1-\TSPD/\TSP$} \\
\cmidrule(lr){2-6}
& $\alpha=1.5$
& $\alpha=2.0$
& $\alpha=2.5$
& $\alpha=3.0$
& $\alpha=5.0$ \\
\midrule
\emph{Continuous} & 0.3996 & 0.4343 & 0.4302 & 0.4339 & 0.5482 \\
\emph{Discrete} & 0.2701 & 0.3180 & 0.3490 & 0.3710 & 0.4187 \\
\midrule
Gap & 0.1295 & 0.1163 & 0.0813 & 0.0629 & 0.1295 \\
\bottomrule
\end{tabular}
\end{table}

Although the continuous model provides a larger reduction, the difference is small for the speed ratios considered here.
Indeed, the difference decreases as $\alpha$ increases from 1.5 to 3.0.
These results suggest that restricting launch and recovery to customer nodes does not appear to impose a large empirical penalty for moderate drone speeds.
However, the difference increases at $\alpha=5.0$, indicating that continuous launch and rendezvous may become more valuable when the drone is very fast.

\section{The Rectilinear-Euclidean Mixed TSPD Model}
\label{sec_mixed}

It is possible to extend our result to the Rectilinear-Euclidean mixed model in Section~\ref{sec_mixed_model}.
The existence of the asymptotic constant follows directly from Theorem~\ref{thm_tspd_limit}, since the rectilinear truck metric and the speed-scaled Euclidean drone metric satisfy its assumptions.
In this section, $\beta_\TSPD$ denotes the corresponding constant.

For the upper bound, the same strip constructions can be applied.
For the homogeneous ring constructions in Sections~\ref{sec_straight}--\ref{sec_quartet} and the five-point patterns in Section~\ref{sec_five_point_patterns}, we replace every truck edge length by the rectilinear distance, while drone edge lengths remain speed-scaled Euclidean distances.
The reduction based on Theorem~\ref{thm_no_straight_ring} also remains valid because the mixed model satisfies
\[
	\frac{1}{\alpha}\|x-y\|_2 \leq \|x-y\|_1.
\]
Therefore, the straight-ring-containing alternatives in the four- and five-point pattern enumerations can be omitted as before.
The numerical evaluation differs only in the ring-cost formulas.
In particular, the simplified quartet expression in Lemma~\ref{lem_c4} is specific to the common-metric case; for the mixed model, we evaluate the candidate quartet costs directly.

For the lower bound, the $k$-TSP argument can be strengthened under the rectilinear truck metric.

\begin{lemma}\label{lem_mixed_ktsp_lower_bound}
For the Rectilinear-Euclidean mixed TSPD model with $\alpha\geq 1$,
\begin{equation}
	\label{mixed_tspd_ktsp_lower_bound}
	\beta_\TSPD \geq \frac{1}{2e} \approx 0.1839.
\end{equation}
\end{lemma}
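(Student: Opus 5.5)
We follow the proof of Lemma~\ref{lem_tspd_ktsp_lower_bound}, with the Euclidean path replaced by a rectilinear one. The combinatorial part carries over unchanged. Every ring has two distinct combined nodes and at most one drone node, so $|D|\leq|T|$ and the truck visits $m\geq\lceil n/2\rceil$ customers. Its closed route therefore contains a path through $k=\lceil n/2\rceil$ customers. This gives
\[
	L\geq L_T\geq \kTSP_1(\{X_1,\ldots,X_n\},k),
\]
where $\kTSP_1$ denotes the minimum $\ell_1$-length of a path visiting $k$ of the $n$ points. The problem therefore reduces to showing
\[
	\liminf_{n\to\infty}\frac{\Eb[\kTSP_1(\{X_1,\ldots,X_n\},\lceil n/2\rceil)]}{\sqrt n}\geq\frac{1}{2e}.
\]

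For this we redo the first-moment (union bound) argument that underlies the bound of \citet{blanchard2024probabilistic}, with the Euclidean ball replaced by the $\ell_1$ ball. The Euclidean ball of radius $r$ has area $\pi r^2$, while the $\ell_1$ ball has area $2r^2$. Fix an ordered sequence of $k$ distinct indices and step lengths $r_1,\ldots,r_{k-1}$. Conditional on the earlier points, the probability that each successive point lies within $\ell_1$ distance $r_j$ of its predecessor is at most $2r_j^2$, and this stays valid near the boundary of $[0,1]^2$. Integrating over the simplex $\sum_j r_j\leq \ell$ gives
\[
	\Pr\bigl(\kTSP_1\leq \ell\bigr)\leq n^{k}\,\frac{(2\cdot 2)^{k-1}\,\ell^{2(k-1)}}{(2k-2)!},
\]
using that the volume element contributes $\prod_j 4r_j\,dr_j$. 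The Euclidean version of the same calculation has $(2\pi)^{k-1}$ in place of $4^{k-1}$.

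Next we apply Stirling's formula to $(2k-2)!$. The probability is small once
\[
	\ell\lesssim \frac{2k-2}{e\sqrt{4n}}\,n^{-1/(2(k-1))}.
\]
Integrating the tail as in the Euclidean proof (the factor $\int_0^1(1-\epsilon^{2k-2}/\cdots)\,d\epsilon\to1$) yields
\[
	\Eb[\kTSP_1]\geq\frac{k-1}{e\sqrt{n}}\,n^{-1/(2(k-1))}\,(1-o(1)).
\]
Setting $k=\lceil n/2\rceil$, dividing by $\sqrt n$ and letting $n\to\infty$ gives $\frac{1}{e}\cdot\frac12=\frac{1}{2e}$. As a check, the Euclidean constant is $\sqrt{2/(e^2\pi)}=\frac{2}{e\sqrt{2\pi}}$ before the factor $\tfrac12$, so the ratio between the two constants is $\sqrt{\pi/2}$, which is exactly $\sqrt{\pi/2}$ coming from the area ratio $\pi/2$. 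Equivalently, one could note that $\|p\|_1\geq\|p\|_2$ only gives $\frac{1}{e\sqrt{2\pi}}$, so the gain must come from the smaller unit-ball area.

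The main obstacle is getting the constants in the simplex integral exactly right, in particular the factor $4^{k-1}$ and the $(2k-2)!$. The cleanest approach is to take the Euclidean inequality of \citet{blanchard2024probabilistic} and substitute the $\ell_1$ ball area $2$ for the Euclidean ball area $\pi$, checking that their proof uses the metric only through ball areas and the triangle inequality. If their argument relies on something Euclidean-specific, we write out the union bound directly as above.
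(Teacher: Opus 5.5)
Your proposal is correct and follows the paper's route. You use the same reduction $L\ge L_T\ge \kTSP^{(1)}(\{X_1,\ldots,X_n\},\lceil n/2\rceil)$, then the first-moment bound of \citet{blanchard2024probabilistic} with the $\ell_1$-ball area $2r^2$ in place of $\pi r^2$. Your explicit constants ($4^{k-1}$, $(2k-2)!$, and the threshold $\frac{k-1}{e\sqrt n}\,n^{-1/(2(k-1))}$) reproduce exactly the inequality the paper states for $\kTSP^{(1)}$; the paper simply asserts that inequality by citing the argument, so your write-up fills in its details.
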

\begin{proof}
For $2\leq k\leq n$, let $\kTSP^{(1)}(\{X_1,\ldots,X_n\},k)$ denote the minimum rectilinear length of a path visiting $k$ points selected from $\{X_1,\ldots,X_n\}$.
As in the proof of Lemma~\ref{lem_tspd_ktsp_lower_bound}, the truck visits at least $\lceil n/2\rceil$ customer nodes, and its cycle contains a path through $\lceil n/2\rceil$ customer nodes.
Therefore,
\[
	L \geq L_T \geq \kTSP^{(1)}(\{X_1,\ldots,X_n\},\lceil n/2\rceil).
\]

Since an $\ell_1$-ball of radius $r$ has area $2r^2$, applying the argument of \citet{blanchard2024probabilistic} to the rectilinear metric gives
\[
	\Eb[\kTSP^{(1)}(\{X_1,\ldots,X_n\},k)]
	\geq
	\frac{1}{e}
	\frac{k-1}{n^{\frac{1}{2}(1+\frac{1}{k-1})}}
	\left(1-\frac{1}{2(2k-1)\sqrt{\pi(k-1)}}\right).
\]
Setting $k=\lceil n/2\rceil$ and using the limits in the proof of Lemma~\ref{lem_tspd_ktsp_lower_bound},
\[
	\liminf_{n\to\infty}\frac{\Eb L}{\sqrt n}
	\geq
	\frac{1}{e}\frac12
	=
	\frac{1}{2e},
\]
which yields \eqref{mixed_tspd_ktsp_lower_bound}.
\end{proof}

We thus obtain the following strengthened lower bound for the mixed model:

\begin{theorem}\label{thm_mixed_combined_lower_bound}
For the Rectilinear-Euclidean mixed TSPD model with $\alpha\geq 1$,
\begin{equation}
	\label{mixed_tspd_combined_lower_bound}
	\beta_\TSPD
	\geq
	\max\left\{
		\frac{5}{4(\alpha+2)},
		\frac{1}{2e}
	\right\}.
\end{equation}
\end{theorem}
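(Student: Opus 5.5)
The bound has two terms, and I plan to establish each one separately and then take the maximum. The term $1/(2e)$ comes directly from Lemma~\ref{lem_mixed_ktsp_lower_bound}, so no further work is needed there. The remaining task is to show $\beta_\TSPD \geq 5/(4(\alpha+2))$ for the mixed model. I will do this by rerunning the nearest-neighbor argument of Lemma~\ref{lem_tspd_nn_lower_bound} under the rectilinear truck metric.

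Fix a feasible TSPD solution. Let $T$ be the set of truck-visited nodes and $D$ the set of drone nodes, and let $L_T$ be the truck's total rectilinear length and $L_D$ the drone-alone Euclidean length. As before,
\[
	L\geq\max\{L_T,\tfrac1\alpha L_D\}.
\]
The only new ingredient is comparing the two metrics: since $\|p\|_1\geq\|p\|_2$, every truck edge measured in $\ell_1$ is at least its Euclidean length. Therefore the argument leading to \eqref{eq_truck_nn_lower_bound} still applies. Each $X_i\in T$ has two incident truck edges, their Euclidean lengths sum to at least $Y_i$ (with $Y_i$ defined by Euclidean nearest and second-nearest neighbors), and each truck edge is counted twice. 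This gives
\[
	L_T\geq\tfrac12\sum_{X_i\in T}Y_i.
\]

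The drone side is unchanged, because drone travel is still Euclidean scaled by $1/\alpha$. Each drone node is reached from, and returns to, two distinct customer nodes, so
\[
	L_D\geq\sum_{X_i\in D}Y_i.
\]
Combining the two bounds through $\max\{a/2,b/\alpha\}\geq(a+b)/(\alpha+2)$ gives $L\geq\frac1{\alpha+2}\sum_i Y_i$. Taking expectations, using exchangeability, and applying Lemma~\ref{lem_nearest} yields $\Eb L\geq \frac{5\sqrt n}{4(\alpha+2)}(1+o(1))$.

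Finally, I pass to the limit. The mixed model satisfies the hypotheses of Theorem~\ref{thm_tspd_limit}, so the expected normalized makespan converges to $\beta_\TSPD$ by bounded convergence. Hence
\[
	\beta_\TSPD\geq 5/(4(\alpha+2)).
\]
Taking the maximum with the bound from Lemma~\ref{lem_mixed_ktsp_lower_bound} proves the theorem.

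The one point that needs care is the application of Lemma~\ref{lem_nearest}. That lemma is stated for a Poisson process on $\Rb^2$, whereas here the points are $n$ uniform points in the unit square. The boundary effects and the Poisson-versus-binomial difference are handled exactly as in the Euclidean lemma, where they contribute only $o(\sqrt n)$. I will reuse that step verbatim.
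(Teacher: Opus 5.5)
Your proposal is correct and matches the paper's proof: the first term comes from rerunning the nearest-neighbor argument of Lemma~\ref{lem_tspd_nn_lower_bound}, where $\|p\|_1\geq\|p\|_2$ preserves the truck bound \eqref{eq_truck_nn_lower_bound} and the Euclidean drone bound is unchanged, and the second term is exactly Lemma~\ref{lem_mixed_ktsp_lower_bound}. One small correction: the Euclidean lemma does not actually handle boundary effects or the Poisson-versus-binomial difference; it applies Lemma~\ref{lem_nearest} directly, so your proof inherits exactly the paper's level of rigor on that point.
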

\begin{proof}
Since every rectilinear truck edge dominates the corresponding Euclidean edge, the truck bound in \eqref{eq_truck_nn_lower_bound} still holds.
The drone bound is unchanged because the drone metric is Euclidean.
Thus, the proof of Lemma~\ref{lem_tspd_nn_lower_bound} yields the first term.
The second term follows from Lemma~\ref{lem_mixed_ktsp_lower_bound}.
\end{proof}

The two terms in \eqref{mixed_tspd_combined_lower_bound} are equal at
\[
	\alpha
	=
	\frac{5e}{2}-2
	\approx 4.7957.
\]
The nearest-neighbor bound is stronger below this value, whereas the rectilinear $k$-TSP bound is stronger above it.

\begin{table}
\centering
\caption{
Numerical results for the Rectilinear-Euclidean mixed TSPD model.
(1) The top section presents the Monte Carlo evaluation of the upper bounds on $\beta_\TSPD$ with 20 million samples.
(2) The middle section presents values obtained by ICP initialized with the LKH heuristic;
the average of 100 random instances is reported for each $(n, \alpha)$.
(3) The bottom section reports the lower bound (LB).
Values following $\pm$ are 95\% confidence-interval half-widths.
All values are rounded.
}
\label{tbl_rectilinear_combined}
\begingroup
\setlength{\tabcolsep}{4pt}
\begin{tabular}{l r rrrrr}
\toprule
&  & \multicolumn{5}{c}{$\TSPD / \sqrt{n}$} \\
  \cmidrule(lr){3-7}
&
&	$\alpha = 1.0$
&	$\alpha = 1.5$
&	$\alpha = 2.0$
&	$\alpha = 2.5$
&	$\alpha = 3.0$\\
\midrule
& straight only         & \meanstd{1.1547}{0.0003}  & \meanstd{1.1547}{0.0003}  & \meanstd{1.1547}{0.0003}  & \meanstd{1.1547}{0.0003}  & \meanstd{1.1547}{0.0003} \\
& triangle only         & \meanstd{1.0069}{0.0002}  & \meanstd{0.8926}{0.0002}  & \meanstd{0.8552}{0.0002}  & \meanstd{0.8383}{0.0002}  & \meanstd{0.8297}{0.0002} \\
& quartet only          & \meanstd{0.9097}{0.0001}  & \meanstd{0.8385}{0.0001}  & \meanstd{0.8237}{0.0001}  & \meanstd{0.8193}{0.0001}  & \meanstd{0.8177}{0.0001} \\
& 5-point patterns      & \meanstd{0.8626}{0.0001}  & \meanstd{0.7854}{0.0001}  & \meanstd{0.7380}{0.0001}  & \meanstd{0.7085}{0.0001}  & \meanstd{0.6910}{0.0001} \\
\midrule
\multirow{11}{*}{$n$}
&    20                 & \meanstd{0.7962}{0.0147}  & \meanstd{0.7099}{0.0137}  & \meanstd{0.6690}{0.0145}  & \meanstd{0.6414}{0.0150}  & \meanstd{0.6140}{0.0153} \\
&    30                 & \meanstd{0.7938}{0.0089}  & \meanstd{0.7060}{0.0082}  & \meanstd{0.6590}{0.0085}  & \meanstd{0.6280}{0.0084}  & \meanstd{0.6057}{0.0091} \\
&    50                 & \meanstd{0.7620}{0.0066}  & \meanstd{0.6801}{0.0067}  & \meanstd{0.6354}{0.0070}  & \meanstd{0.6049}{0.0078}  & \meanstd{0.5849}{0.0081} \\
&    75                 & \meanstd{0.7407}{0.0060}  & \meanstd{0.6623}{0.0059}  & \meanstd{0.6197}{0.0060}  & \meanstd{0.5907}{0.0060}  & \meanstd{0.5710}{0.0066} \\
&   100                 & \meanstd{0.7344}{0.0049}  & \meanstd{0.6574}{0.0052}  & \meanstd{0.6170}{0.0059}  & \meanstd{0.5907}{0.0061}  & \meanstd{0.5730}{0.0064} \\
&   200                 & \meanstd{0.7206}{0.0030}  & \meanstd{0.6454}{0.0033}  & \meanstd{0.6059}{0.0037}  & \meanstd{0.5782}{0.0043}  & \meanstd{0.5608}{0.0044} \\
&   500                 & \meanstd{0.7074}{0.0019}  & \meanstd{0.6361}{0.0018}  & \meanstd{0.5979}{0.0019}  & \meanstd{0.5741}{0.0020}  & \meanstd{0.5565}{0.0020} \\
&  1000                 & \meanstd{0.7011}{0.0013}  & \meanstd{0.6299}{0.0014}  & \meanstd{0.5925}{0.0015}  & \meanstd{0.5681}{0.0016}  & \meanstd{0.5510}{0.0015} \\
&  2000                 & \meanstd{0.6974}{0.0010}  & \meanstd{0.6274}{0.0011}  & \meanstd{0.5908}{0.0012}  & \meanstd{0.5670}{0.0013}  & \meanstd{0.5502}{0.0012} \\
&  5000                 & \meanstd{0.6929}{0.0006}  & \meanstd{0.6242}{0.0006}  & \meanstd{0.5880}{0.0007}  & \meanstd{0.5643}{0.0007}  & \meanstd{0.5479}{0.0007} \\
& 10000                 & \meanstd{0.6920}{0.0004}  & \meanstd{0.6233}{0.0004}  & \meanstd{0.5874}{0.0005}  & \meanstd{0.5642}{0.0005}  & \meanstd{0.5479}{0.0005} \\
\midrule
LB
& Ours                    & \meanonly{0.4167}  & \meanonly{0.3571}  & \meanonly{0.3125}  & \meanonly{0.2778}  & \meanonly{0.2500} \\
\bottomrule
\end{tabular}
\endgroup
\end{table}

\begin{figure}
\centering
\begin{tikzpicture}
\begin{axis}[
	width=0.78\textwidth,
	height=0.46\textwidth,
	xlabel={$\alpha$},
	ylabel={$\TSPD/\sqrt{n}$},
	xmin=0.9,
	xmax=3.1,
	ymin=0.2,
	ymax=0.9,
	xtick={1.0,1.5,2.0,2.5,3.0},
	ytick={0.2,0.3,0.4,0.5,0.6,0.7,0.8,0.9},
	grid=major,
	legend cell align={left},
	legend columns=3,
	legend style={at={(0.5,1.03)}, anchor=south, draw=none, fill=none, font=\small},
	tick label style={font=\small},
	label style={font=\small}
]
\addplot[black, thick, mark=square*, mark options={fill=black, draw=black, line width=0pt}]
	coordinates {(1.0,0.8626) (1.5,0.7854) (2.0,0.7380) (2.5,0.7085) (3.0,0.6910)};
\addlegendentry{5-point upper bound}
\addplot[black, thick, dashed, mark=*, mark options={fill=black, draw=black, line width=0pt}]
	coordinates {(1.0,0.6920) (1.5,0.6233) (2.0,0.5874) (2.5,0.5642) (3.0,0.5479)};
\addlegendentry{ICP, $n=10000$}
\addplot[black, thick, dotted, mark=triangle*, mark size=2.6pt, mark options={fill=black, draw=black, line width=0pt}]
	coordinates {(1.0,0.4167) (1.5,0.3571) (2.0,0.3125) (2.5,0.2778) (3.0,0.2500)};
\addlegendentry{our lower bound}
\end{axis}
\end{tikzpicture}
\caption{Best upper bound, ICP estimate, and our lower bound for the Rectilinear-Euclidean mixed TSPD model.}
\label{fig_mixed_bounds}
\end{figure}

Table~\ref{tbl_rectilinear_combined} reports the computational results for the Rectilinear-Euclidean mixed TSPD model.
The top section displays the Monte Carlo estimates of the upper bounds for various ring patterns, using the mixed ring-cost formulas described above.
The middle section reports the values obtained by ICP initialized with the LKH heuristic.
Since ICP is a heuristic algorithm, these values are approximations to upper bounds.
The bottom section reports our lower bound in \eqref{mixed_tspd_combined_lower_bound}.
For the speed ratios shown, the reported values are determined by the nearest-neighbor term.
When $\alpha=2.0$, the value decreases to 0.5874 at $n=10000$, suggesting that the mixed-model constant is perhaps close to 0.58.
Figure~\ref{fig_mixed_bounds} summarizes the best upper bound, the ICP estimate for $n=10000$, and our lower bound.

\section{Concluding Remarks}
\label{sec_conclusion}

We established a Beardwood--Halton--Hammersley-type asymptotic limit for the Traveling Salesman Problem with Drone (TSPD). 
For the speed-scaled Euclidean TSPD model, our analysis establishes the limit through nonmonotone subadditive Euclidean functional arguments and derives upper bounds via structured ring-based tour constructions.
A key insight is that optimal TSPD solutions can be assumed to exclude truck-drone co-travel segments, which significantly reduces the computational burden and enables efficient Monte Carlo evaluations.
We also developed lower bounds that link cooperative routing to nearest-neighbor distance distributions and the $k$-traveling salesman problem.
The resulting lower bound improves existing bounds and remains positive in the fast-drone regime.
The proposed methodological framework, which also applies to the Rectilinear--Euclidean mixed TSPD model, would provide a general template for analyzing the asymptotic behavior of a broader class of cooperative and multi-modal routing problems with synchronization constraints and heterogeneous distance metrics.

Numerical experiments compare the proposed bounds with empirical values obtained by ICP.
For the speed-scaled Euclidean model, our results suggest that the TSPD constant is close to 0.49 when the drone travels twice as fast as the truck.
For practical strategic planning, this result supports the use of the following continuous approximation formula for the optimal TSPD makespan serving $n$ customers in a service region of area $A$:
\(
	\TSPD \approx 0.49 \sqrt{nA}.
\)
Comparison with the continuous model indicates that restricting launch and recovery to customer nodes imposes a modest empirical penalty for moderate drone speeds.
This suggests that continuously chosen launch and recovery locations may not be essential in practice.

\section*{Acknowledgements}
This work was supported by the National Research Foundation of Korea (NRF) grant funded by the Korean government (MSIT) (RS-2023-00259550).

\bibliographystyle{dcu}
\bibliography{library}

\end{document}